\newcommand{\C}{\mathbb{C}}
\newcommand{\Pro}{\mathcal{P}}
\newcommand{\End}{\operatorname{End}}
\newcommand{\Hom}{\operatorname{Hom}}
\newcommand{\Ext}{\operatorname{Ext}}
\newcommand{\Coh}{\operatorname{Coh}}
\newcommand{\Str}{\mathcal{O}}
\newcommand{\sgn}{\operatorname{sgn}}
\newcommand{\Loc}{\operatorname{Loc}}
\newcommand{\Z}{\mathbb{Z}}
\newcommand{\Weyl}{\mathsf{W}}
\newcommand{\A}{\mathcal{A}}
\newcommand{\F}{\mathbb{F}}
\newcommand{\D}{\mathcal{D}}
\newcommand{\gr}{\operatorname{gr}}
\newcommand{\g}{\mathfrak{g}}
\newcommand{\Fr}{\operatorname{Fr}}
\newcommand{\B}{\mathcal{B}}
\newcommand{\Spec}{\operatorname{Spec}}
\newcommand{\Q}{\mathbb{Q}}
\newcommand{\Ring}{\mathsf{R}}
\newcommand{\E}{\mathcal{E}}
\newcommand{\M}{\mathcal{M}}
\newtheorem{Thm}{Theorem}[section]
\newtheorem{Prop}[Thm]{Proposition}
\newtheorem{Cor}[Thm]{Corollary}
\newtheorem{Lem}[Thm]{Lemma}
\theoremstyle{definition}
\newtheorem{Ex}[Thm]{Example}
\newtheorem{Rem}[Thm]{Remark}
\numberwithin{equation}{section}
\author{Ivan Losev}
\title{On inductive construction of Procesi bundles}
\thanks{MSC 2010: 14E16, 53D55, 16G99}
\address{Department
of Mathematics, University of Toronto, ON,  Canada \&
National Research University Higher School of Economics, Moscow, Russian Federation}
\email{ivan.loseu@gmail.com}
\begin{document}
\begin{abstract}
A Procesi bundle, a rank $n!$ vector bundle on the Hilbert scheme $H_n$ of $n$ points in $\mathbb{C}^2$,
was first constructed by Mark Haiman in his proof of the $n!$ theorem by using a complicated combinatorial argument. 
Since then alternative constructions
of this bundle were given by Bezrukavnikov-Kaledin and by Ginzburg. In this paper we give a geometric/
representation-theoretic proof of the inductive formula for the Procesi bundle that plays an important role
in Haiman's construction. Then we use the inductive formula 
to prove a weaker version of the $n!$ theorem: the normalization
of Haiman's isospectral Hilbert scheme is Cohen-Macaulay and Gorenstein, and the normalization morphism
is bijective. This improves an earlier result of Ginzburg.
\end{abstract}
\maketitle

\section{Introduction}
\subsection{$n!$ theorem}
The Hilbert scheme $H_n:=\operatorname{Hilb}_n(\C^2)$ parameterizes the codimension $n$ ideals in $\C[x,y]$.
It is known to be a smooth irreducible algebraic variety of dimension $2n$. It comes with the
Hilbert-Chow morphism  $\rho_n:H_n\rightarrow V_n/S_n$, where we write $V_n$ for $(\C^2)^{\oplus n}$
and the symmetric group $S_n$ acts by permuting the $n$ copies of $\C^2$. The morphism sends a point
in $H_n$ --  a codimension $n$ ideal in $\C[x,y]$ -- to its support counted with multiplicities.
This morphism is known to be a resolution of singularities.

Following Haiman, we define the isospectral Hilbert scheme $IH_n$ as the fiber product
$V_n\times_{V_n/S_n}H_n$ with reduced scheme structure. By definition, there is
a finite morphism $\eta:IH_n\rightarrow H_n$. It is the quotient morphism
for the natural action of $S_n$ on $IH_n$.

Here is a geometric version Haiman's $n!$ theorem, \cite[Theorem 3.1]{Haiman_n!}, (there is also an elementary version
that has to do with the spaces of partial derivatives of certain two-variable generalizations
of the Vandermonde determinant, that version follows from the geometric one).

\begin{Thm}\label{Thm:n!_geometric}
The variety $IH_n$ is Cohen-Macaulay and Gorenstein.
\end{Thm}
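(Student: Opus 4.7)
The approach would be to prove the $n!$ theorem in its sheaf-theoretic form: construct an $S_n$-equivariant isomorphism $\eta_*\Str_{IH_n} \cong P$ of $\Str_{H_n}$-algebras, where $P$ is a Procesi bundle---a rank $n!$ $S_n$-equivariant vector bundle on $H_n$ whose $\sgn$-isotypic component is a line bundle. Granted such an isomorphism, Theorem~\ref{Thm:n!_geometric} follows cleanly. Since $H_n$ is smooth and $P$ is locally free, the finite morphism $\eta$ is automatically flat, so $IH_n \cong \Spec_{H_n} P$ is Cohen--Macaulay. For the Gorenstein property one uses that $H_n$ is symplectic (hence $\omega_{H_n}\cong\Str_{H_n}$), reducing the question to showing that $\mathcal{H}om_{\Str_{H_n}}(P,\Str_{H_n})$ is a locally free rank-one $P$-module; this amounts to a Frobenius algebra structure on $P$, with trace given by projection onto the $\sgn$-component.

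The strategy is induction on $n$, with the inductive formula for $P$ as the main engine. Over the open locus $U\subset H_n$ parametrizing ideals whose support contains an isolated simple point, there is a natural map $U\to H_{n-1}\times\C^2$ obtained by forgetting that point, and the inductive formula is expected to express $P_n|_U$ as an $S_n$-equivariant parabolic induction built from $P_{n-1}\boxtimes\Str_{\C^2}$ pulled back via this map. On the $IH_n$ side, the fiber-product description together with the coset decomposition of $S_n$ over $S_{n-1}$ yields a parallel decomposition of $\eta_*\Str_{IH_n}|_U$ in terms of $\eta_{n-1,*}\Str_{IH_{n-1}}$. Combining these with the inductive hypothesis gives the required isomorphism of $S_n$-equivariant $\Str_{H_n}$-algebras over the open set $U$.

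The principal obstacle is extending this isomorphism across the complement of $U$, which is supported on the stratum where all $n$ points have collided. Near this deep stratum $IH_n$ can have complicated, possibly non-normal singularities, and a priori $\eta_*\Str_{IH_n}$ is not known to be reflexive there, so the codimension-one match with the reflexive sheaf $P$ does not automatically propagate. A cleaner intermediate target is the normalization $\widetilde{IH_n}$ of $IH_n$: its pushforward to $H_n$ is automatically torsion-free and $S_2$, so the isomorphism over $U$ extends to give $\widetilde{IH_n}$ Cohen--Macaulay and Gorenstein. Upgrading to $IH_n$ itself then requires showing that the normalization morphism is an isomorphism, which in turn reduces to a delicate local analysis at the most singular torus-fixed point of $H_n$, where the $n$ points all sit at the origin; this is the hardest ingredient, and the step that in Haiman's original argument was handled by the polygraph construction.
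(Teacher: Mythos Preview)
The paper does not prove Theorem~\ref{Thm:n!_geometric}. It is stated as Haiman's result, with citation to \cite{Haiman_n!}; the abstract and the acknowledgements make explicit that the paper establishes only the weaker Theorem~\ref{Thm:bijective_normalization} (the normalization of $IH_n$ is Cohen--Macaulay and Gorenstein, and the normalization morphism is bijective), and that an earlier version claiming the full $n!$ theorem contained a fatal mistake. So there is no ``paper's own proof'' of this statement to compare against.

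Your outline is candid about its gap, and it is exactly the gap the paper also fails to close: you reduce to showing that the normalization morphism $\widetilde{IH_n}\to IH_n$ is an isomorphism and then stop, noting that this is ``the hardest ingredient.'' The paper reaches the same impasse---it proves the normalization map is \emph{bijective}, not that it is an isomorphism (equivalently, it does not reprove that $IH_n$ is normal). As a proof of Theorem~\ref{Thm:n!_geometric}, then, your proposal is incomplete for precisely the reason the paper is.

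For the part both arguments do achieve (Cohen--Macaulay and Gorenstein for the normalization), your route and the paper's are genuinely different. You propose to match $\eta_*\Str_{IH_n}$ with a Procesi bundle over an open locus via parabolic induction from $\Pro_{n-1}$, then extend to the normalization by an $S_2$/reflexivity argument. The paper instead quantizes $H_{n,n-1}$ as a Lagrangian in $H_n\times H_{n-1,\diamondsuit}$, reduces mod $p\gg 0$, and shows the resulting bimodule is a splitting bundle for an Azumaya algebra on $H_{n,n-1}^{(1)}$; comparing splitting bundles forces the global push--pull identity $\alpha_{n*}\beta_n^*\Pro_{n-1,\diamondsuit}\cong\Pro_n$ (Theorem~\ref{Thm:Procesi_inductive}) up to a line-bundle twist, which is then eliminated by a local computation. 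That identity equips $\Pro_n$ with an algebra structure, and $X_n:=\Spec_{H_n}\Pro_n$ is directly seen to be normal, Cohen--Macaulay, Gorenstein, and finite bijective onto $IH_n$. The characteristic-$p$ quantization machinery is the paper's substitute for your inductive open-locus matching; no reflexive-extension argument appears.
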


Note that, in order for $IH_n$ to be Cohen-Macaulay, we do need to consider $V_n\times_{V_n/S_n}H_n$
with its reduced scheme structure: the fiber product with the natural scheme structure can be shown to
be non-reduced while generically reduced. This can never happen for Cohen-Macaulay schemes.

Let us explain a motivation behind Theorem \ref{Thm:n!_geometric}.
Note that the two-dimensional torus $T:=(\C^\times)^2$ acts on $\C^2$ in a natural way:
$(t_1,t_2).(x,y)=(t_1^{-1}x, t_2^{-1}y)$. The action induces an action on $V_n$, on $\C[x,y]$
by algebra automorphisms and hence an action on $H_n$. The Hilbert-Chow map $H_n\rightarrow V_n/S_n$
is $T$-equivariant so we get a $T$-action on $IH_n$ as well. The $T$-fixed
points in $H_n$ precisely correspond to the monomial codimension $n$ ideals in $\C[x,y]$.
The latter are naturally labelled by Young diagrams with $n$ boxes: the diagram corresponding to
a monomial ideal indicates which monomials do not lie in the ideal. Let us write $x_\lambda$ for the
fixed point labelled by the Young diagram $\lambda$.

Now consider the sheaf $\Pro^H_n:=\eta_* \Str_{IH_n}$ (the superscript ``H''
is for Haiman). Since $IH_n$ is Cohen-Macaulay
and $\eta$ is finite, $\eta$ is flat. The degree is easily seen to be $n!$. So $\Pro^H_n$
is a degree $n!$ vector bundle. The idea to look for such a bundle was due to Procesi,
so Haiman called $\Pro^H_n$ the {\it Procesi bundle}. The bundle $\Pro^H_n$ is $T$-equivariant.
In particular, its fibers at the fixed points, $\Pro^H_{\lambda}:=(\Pro^H_n)_{x_\lambda}$
are bigraded $S_n$-modules. Another hard theorem from \cite{Haiman_n!} says that the Frobenius
character of the bigraded $S_n$-module $(\Pro^H_n)_{x_\lambda}$ is the modified Macdonald
polynomial $\tilde{H}_\lambda(q,t)$. This, in particular, proves the famous Macdonald
positivity conjecture that says that the Macdonald polynomials are Schur positive.

\subsection{Inductive construction}
Let us say a few words about Haiman's proof of Theorem \ref{Thm:n!_geometric},
see \cite[Sections 3,4]{Haiman_n!} and also \cite[Sections 5.6]{Haiman_CDM}
for an overview.
The construction is inductive in nature and utilizes the {\it nested Hilbert
scheme} $H_{n,n-1}$. This scheme parameterizes pairs of ideals $J\subset J'$
in $\C[x,y]$ such that $\operatorname{codim}_{\C[x,y]} J=n$ and $\operatorname{codim}_{\C[x,y]}J'=n-1$.
It turns out, \cite{Cheah}, that $H_{n,n-1}$ is an irreducible smooth variety of dimension
$2n$. It comes with two morphisms $\beta: H_{n,n-1}\rightarrow H_{n-1}\times \C^2$
sending $(J'\subset J)$ to $(J',\operatorname{Supp}(J'/J))$ and $\alpha:H_{n,n-1}\rightarrow H_n$ forgetting $J'$.

Set $IH_{n,n-1}:=H_{n,n-1}\times_{H_{n-1}}IH_{n-1}$. This scheme comes with a natural
morphism to $IH_{n}$, denote it by $\hat{\alpha}$, it is induced by $\alpha$. Haiman deduced
Theorem \ref{Thm:n!_geometric} from the following equality
\begin{equation}\label{eq:Haiman_iso}
\hat{\alpha}_* \Str_{IH_{n,n-1}}=\Str_{IH_n}.
\end{equation}
Here and below we will use the notation like $\hat{\alpha}_*$ for the {\it derived} functor.

The proof of (\ref{eq:Haiman_iso}) is based on Haiman's {\it polygraph} theorem,
\cite[Theorem 4.1]{Haiman_n!}, which
is proved in \cite{Haiman_n!} by  explicit combinatorial/ commutative-algebraic computations.
Since these computations are extremely complicated, there were several attempts
to under various parts of Haiman's construction more conceptually. The present paper
contributes to this goal.

\subsection{Related developments}
Before  explaining results of this paper we want to discuss some subsequent developments
related to Haiman's work starting with a construction that  ours is based upon: a construction
of the Procesi bundle due to Bezrukavnikov and Kaledin, \cite{BK}.

An important property of the Procesi bundle $\Pro^H_n$ observed by Haiman in \cite{Haiman_vanishing}
is  the derived McKay equivalence $$R\operatorname{Hom}_{\Str_{H_n}}(\Pro^H_n,\bullet):
D^b(\Coh H_n)\xrightarrow{\sim} D^b(\C[\underline{x},\underline{y}]\operatorname{-mod}^{S_n}),$$
where in the target we write $\underline{x}$ for $x_1,\ldots,x_n$, $\underline{y}$ for $y_1,\ldots,y_n$,
and the superscript ``$S_n$'' means that we consider the category of $S_n$-equivariant modules.
This equivalence sends $\Pro^H_n$ to the smash-product algebra $\C[\underline{x},\underline{y}]\# S_n$
(whose category of modules is $\C[\underline{x},\underline{y}]\operatorname{-mod}^{S_n}$). A consequence
(and, in fact, an equivalent condition thanks to \cite[Proposition 2.2]{BK}) is that
\begin{equation}\label{eq:REnd}
R\operatorname{End}(\Pro^H_n)=\C[\underline{x},\underline{y}]\# S_n.
\end{equation}

On the other hand, derived equivalences of this sort are classical in geometric Representation
theory, where they come from the derived Beilinson-Bernstein type localization theorem.
Inspired by the version of this theorem in positive characteristic proved in \cite{BMR}, in \cite{BK}
Bezrukavnikov and Kaledin established a derived McKay equivalence and constructed
an analog of a Procesi bundle on any symplectic resolution of any symplectic quotient singularity,
a basic example is the resolution $H_n$ of $V_n/S_n$. The latter variety admits a standard
quantization, $\Weyl(V_n)^{S_n}$, where we write $\Weyl(V_n)$ for the Weyl algebra of
the symplectic vector space $V_n$. Over an algebraically closed field $\F$ of positive characteristic, the algebra $\Weyl(V_n)$
is an Azumaya algebra over the Frobenius twist $V_n^{(1)}$. The restrictions of $\Weyl(V_n)^{S_n}$
and $\F[V_n^{(1)}]\# S_n$ to the formal neighborghood of $0$ in $V_n^{(1)}$ are Morita equivalent
provided the characteristic is large enough.
Further, one can find a ``Frobenius constant'' quantization
$\underline{\D}$ of $H_n$ whose algebra of (derived) global sections (automatically a quantization
of $V_n/S_n$) is $\Weyl(V_n)^{S_n}$. Being Frobenius constant means, in particular,
that $\underline{\D}$ can be viewed as an Azumaya algebra on $H_n^{(1)}$. The restriction of $\underline{\D}$
to the formal neighborhood of the zero fiber of $H_n^{(1)}\rightarrow V_n^{(1)}/S_n$
splits. From the indecomposable summands of a splitting bundle one can form
an analog of a Procesi bundle. Then thanks to the rigidity (the absence of higher
self-extensions) one can first extend this analog of a Procesi bundle to the whole variety $H_n^{(1)}$ and then
lift to characteristic $0$. One gets a vector bundle $\Pro_n^{BK}$ on $H_n$
satisfying (\ref{eq:REnd}).

The bundle $\Pro_n^{BK}$ can be shown to  coincide with $\Pro_n^H$ thanks
to results of \cite{L_Procesi}, where ``abstract'' Procesi bundles were classified:
it was shown that there are exactly two abstract {\it normalized} Procesi bundles that are dual
to one another. In this paper we reprove Theorem \ref{Thm:n!_geometric} using the bundle
$\Pro_n^{BK}$. Let us also point out that the Macdonald positivity was reproved
(and generalized)  in \cite{BF} by using the construction of $\Pro_n^{BK}$ from
\cite{BK}.

We would also like to mention several other related developments, although they do not
play any role in our proofs. Ginzburg proved that the normalization of $IH_n$
is Cohen-Macaulay and Gorenstein using the Hodge filtration on the  Hotta-Kashiwara
D-module, see \cite[Proposition 8.2.4]{Ginzburg}.
The variety $IH_n$ was shown to be normal by Haiman but presently there is
no independent proof of the normality. Gordon, \cite{Gordon_Procesi}, used Ginzburg's
construction to deduce the Macdonald positivity. There are also other proofs of
the Macdonald positivity, e.g.,  \cite{GrHa}.

\subsection{Results and ideas of proof}
We now proceed to explaining the ideas of our proof.
First, let us record a consequence of (\ref{eq:Haiman_iso}):
\begin{equation}\label{eq:Haiman_iso_Pro}
\alpha_*(\beta^* \left(\Pro^H_{n-1}\boxtimes \Str_{\C^2}\right))=\Pro^H_{n}.
\end{equation}

Our goal is to establish an analog of this equality for the Procesi bundle
constructed by Bezrukavnikov and Kaledin:

\begin{Thm}\label{Thm:Procesi_inductive}
We have a $\C[\underline{x},\underline{y}]\#S_{n-1}$-linear isomorphism
\begin{equation}\label{eq:BK_iso_Pro}
\alpha_*(\beta^* \left(\Pro^{BK}_{n-1}\boxtimes \Str_{\C^2}\right))=\Pro^{BK}_n,
\end{equation}
\end{Thm}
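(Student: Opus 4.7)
My plan is to verify that the candidate sheaf $\mathcal{F}:=\alpha_{*}\bigl(\beta^{*}(\Pro^{BK}_{n-1}\boxtimes\Str_{\C^{2}})\bigr)$ satisfies the abstract characterization (\ref{eq:REnd}) of a normalized Procesi bundle, and then to identify it with $\Pro^{BK}_{n}$ by invoking the classification result of \cite{L_Procesi}. Throughout, $\alpha_{*}$ is read derived, as agreed in the paper.

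First I would check the coarse invariants. Since $\alpha$ is generically $n$-to-$1$, $\beta$ is generically an isomorphism, and $\Pro^{BK}_{n-1}$ has rank $(n-1)!$, one expects $\mathcal{F}$ to have generic rank $n!=n\cdot(n-1)!$. The key technical prerequisite is to show that $\alpha_{*}\beta^{*}(\Pro^{BK}_{n-1}\boxtimes\Str_{\C^{2}})$ is concentrated in cohomological degree zero and is a genuine vector bundle of rank $n!$ on $H_n$. This is an expected consequence of the projectivity of $\alpha$ together with the rigidity and characteristic-$p$ vanishing inputs behind the Bezrukavnikov--Kaledin construction of $\Pro^{BK}$, and should follow by spreading $H_{n,n-1}$ and $\alpha,\beta$ out to a Frobenius-constant quantization picture parallel to the one used in \cite{BK}.

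The heart of the argument is the computation of $\End(\mathcal{F})$. By adjunction this reduces to
\[
\Hom_{H_{n,n-1}}\bigl(\beta^{*}(\Pro^{BK}_{n-1}\boxtimes\Str_{\C^{2}}),\,\alpha^{!}\mathcal{F}\bigr),
\]
and, after base change along $H_{n,n-1}\times_{H_{n}}H_{n,n-1}\rightarrow H_{n,n-1}\times H_{n,n-1}$, takes the shape of a convolution algebra whose indexing geometry encodes the inductive jump from $S_{n-1}$ to $S_{n}$. The inductive hypothesis $\End(\Pro^{BK}_{n-1})=\C[V_{n-1}]\# S_{n-1}$ together with $\End(\Str_{\C^{2}})=\C[\C^{2}]$ yields the sub-smash-product $\C[V_{n}]\# S_{n-1}$, while the remaining transpositions generating $S_{n}$ should come from the non-diagonal components of $H_{n,n-1}\times_{H_{n}}H_{n,n-1}$, which permute the $n$ sheets of $\alpha$ over a generic point of $H_{n}$. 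The output should be the full smash product $\C[\underline{x},\underline{y}]\# S_{n}$ as in (\ref{eq:REnd}), identifying $\mathcal{F}$ with some normalized Procesi bundle.

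By \cite{L_Procesi} there are exactly two such bundles, dual to each other, so it remains to distinguish $\mathcal{F}$ from the dual of $\Pro^{BK}_{n}$. I would do this by comparing the bigraded Frobenius character of the fiber of $\mathcal{F}$ at a single $T$-fixed point $x_{\lambda}$ with that of the corresponding fiber of $\Pro^{BK}_{n}$, using the explicit description of $\alpha^{-1}(x_{\lambda})$ and $\beta$ near $x_\lambda$ together with the inductive hypothesis; the requested $\C[\underline{x},\underline{y}]\# S_{n-1}$-linearity is built into the construction. The main obstacle I foresee is the endomorphism computation of the middle step: the $S_{n-1}$-subalgebra of the answer is inherited directly from the inductive hypothesis, but extracting the extra transpositions from the geometry of $H_{n,n-1}\times_{H_{n}}H_{n,n-1}$, while simultaneously ruling out any spurious higher-Ext contributions that would enlarge the answer beyond a smash product, is the delicate point where rigidity of $\Pro^{BK}$ and the full force of the Bezrukavnikov--Kaledin framework must enter.
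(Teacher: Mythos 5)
Your strategy is genuinely different from the paper's: you propose to verify directly that $\mathcal{F}:=\alpha_{*}\beta^{*}(\Pro^{BK}_{n-1}\boxtimes\Str_{\C^{2}})$ satisfies the abstract axioms for a normalized Procesi bundle, via a convolution-algebra computation of $R\End(\mathcal{F})$ over the fibered square $H_{n,n-1}\times_{H_n}H_{n,n-1}$, and then to appeal to the classification from \cite{L_Procesi}. The paper instead avoids computing $\End(\mathcal{F})$ altogether: it quantizes $\Str_{H_{n,n-1}}$ to a $\D_{n-1,\diamondsuit}$-$\D_n$-bimodule $\D_{n,n-1}$ (Section~\ref{S_quant_nested}), shows $R\Gamma(\D_{n,n-1})=\A_{n-1,\diamondsuit}$, passes to an $\Ring$-form and reduces mod $p$, proves that the Frobenius-constant version $\underline{\D}^\F_{n,n-1}$ is a splitting bundle for $\bigl(\underline{\D}^{\F}_{n-1,\diamondsuit}\otimes\underline{\D}^{\F,opp}_n\bigr)|_{H_{n,n-1}^{\F(1)}}$ (Lemma~\ref{Lem:splitting}), and then compares the resulting commutative diagrams of derived McKay equivalences to get $\alpha_{n*}\beta_n^*\Pro_{n-1,\diamondsuit}^\F(k)\cong\Pro_n^\F(\ell)$ before killing the twists (Lemma~\ref{Lem:no_twist}).

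As it stands, your proposal has three unresolved gaps, each located precisely where the paper's quantization machinery does real work. First, the assertion that $\mathcal{F}$ is concentrated in degree zero with no higher self-extensions is treated as an "expected consequence" of the Bezrukavnikov--Kaledin framework, but this is not automatic: it is exactly the rigidity statement that the paper obtains by quantizing $H_{n,n-1}$, establishing $R^{>0}\Gamma(\D_{n,n-1})=0$ (Lemma~\ref{Lem:nested_cohom_vanish} and Proposition~\ref{Prop:glob_sections_nested}), and then using the splitting-bundle and formal-function arguments in characteristic $p$. Without a concrete replacement for this input, the adjunction in your middle step is not even between the objects you intend. Second, the endomorphism computation itself is not carried out, and the hard part is not producing candidate transpositions from the non-diagonal components of $H_{n,n-1}\times_{H_n}H_{n,n-1}$ (which is believable over $V_n^0/S_n$), but showing that the convolution algebra is \emph{exactly} $\C[\underline{x},\underline{y}]\#S_n$ globally, with no extra contributions from the boundary where $\alpha_n$ is non-finite and $\bar{\alpha}_n$ contracts; ruling out such contributions is again where one needs the characteristic-$p$ splitting-bundle control. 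Third, distinguishing $\mathcal{F}$ from the dual normalized Procesi bundle by computing bigraded Frobenius characters at $T$-fixed points is circular in this setting: the identification of those fibers with modified Macdonald polynomials is a \emph{consequence} of the $n!$ theorem and is not available here as an input. The paper sidesteps this by restricting to the formal neighborhood of a generic boundary point $b$ (Lemmas~\ref{Lem:Hilb_local}, \ref{Lem:taut_restriction}, \ref{Lem:Procesi_sgn} and the proof of Lemma~\ref{Lem:no_twist}), where the restriction of any Procesi bundle is completely understood by \cite{L_Procesi}; that is the tractable discriminant, not the fixed-point fiber.
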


We will use this theorem to strengthen a result of Ginzburg,
\cite[Proposition 8.2.4]{Ginzburg}.

\begin{Thm}\label{Thm:bijective_normalization}
The normalization of $IH_n$ is Cohen-Macaulay and Gorenstein. Moreover
the normalization morphism for $IH_n$ is bijective.
\end{Thm}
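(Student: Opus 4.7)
The plan is to construct the normalization of $IH_n$ as $Z_n := \Spec_{H_n}(\Pro_n^{BK})$, after first using Theorem \ref{Thm:Procesi_inductive} to equip $\Pro_n^{BK}$ with the structure of a sheaf of commutative $\Str_{H_n}$-algebras. Cohen--Macaulayness and Gorensteinness of the normalization will then follow because $\Pro_n^{BK}$ is a self-dual vector bundle on the smooth variety $H_n$, and bijectivity of the normalization morphism will be checked by induction using (\ref{eq:BK_iso_Pro}).

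First, I would build a commutative $\Str_{H_n}$-algebra structure on $\Pro_n^{BK}$ by induction on $n$. The base case $n=1$ is trivial, as $\Pro_1^{BK}=\Str_{\C^2}$. For the inductive step, $\Pro_{n-1}^{BK}\boxtimes\Str_{\C^2}$ inherits an algebra structure from $\Pro_{n-1}^{BK}$; the pullback $\beta^*$ preserves commutative algebras; and the underived pushforward along the proper morphism $\alpha$ also preserves commutative algebras. Since Theorem \ref{Thm:Procesi_inductive} asserts that the derived pushforward is concentrated in degree zero, this produces an algebra structure on $\Pro_n^{BK}$. Set $Z_n:=\Spec_{H_n}(\Pro_n^{BK})$; then $\tilde\eta:Z_n\to H_n$ is finite, and flat since $\Pro_n^{BK}$ is locally free of rank $n!$, so $Z_n$ is Cohen--Macaulay. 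For Gorensteinness, the relative dualizing sheaf is $(\Pro_n^{BK})^\vee$; the normalized Procesi bundle satisfies $(\Pro_n^{BK})^\vee\cong\Pro_n^{BK}\otimes\sgn\otimes L$ for some line bundle $L$ pulled back from $H_n$ (see \cite{BK,L_Procesi}), so $(\Pro_n^{BK})^\vee$ is an invertible $\Str_{Z_n}$-module and $Z_n$ is Gorenstein.

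Next, I would construct the morphism $Z_n\to IH_n$. The inclusion $\C[\underline{x},\underline{y}]\hookrightarrow\End(\Pro_n^{BK})=\C[\underline{x},\underline{y}]\#S_n$ from (\ref{eq:REnd}) endows $\Pro_n^{BK}$ with a $\C[V_n]$-module structure; that this action is by multipliers for the commutative multiplication can be verified by tracing through the inductive construction, since on the right-hand side of (\ref{eq:BK_iso_Pro}) the $\C[V_n]$-action splits as the module actions of $\C[\underline{x},\underline{y}]\#S_{n-1}$ (via $\End(\Pro_{n-1}^{BK})$) and of $\C[x_n,y_n]$ (via the $\Str_{\C^2}$ factor). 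This yields an $S_n$-equivariant $H_n$-morphism $Z_n\to V_n\times H_n$ whose image lies in $IH_n$, because the two projections agree after descent to $V_n/S_n$. As $\tilde\eta_*\Str_{Z_n}$ and $\eta_*\Str_{IH_n}$ are both rank-$n!$ sheaves on $H_n$ that coincide on the dense open locus where the $n$ points are distinct, the induced morphism $Z_n\to IH_n$ is finite and birational.

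Finally, I would conclude. Since $Z_n$ is Cohen--Macaulay and hence $S_2$, normality reduces to regularity in codimension one, which I would verify by induction on $n$, using (\ref{eq:BK_iso_Pro}) to reduce the analysis at the generic point of the exceptional divisor of $H_n$ to an explicit $H_2$ computation. The universal property of normalization then gives $Z_n\cong\widetilde{IH_n}$, so the normalization is Cohen--Macaulay and Gorenstein. For bijectivity of $\widetilde{IH_n}\to IH_n$, I would again induct on $n$: over the open stratum where the $n$-th point is distinct from the other $n-1$, formula (\ref{eq:BK_iso_Pro}) reduces the fibers of $Z_n\to IH_n$ to those of $Z_{n-1}\to IH_{n-1}$, while over the diagonal strata one analyzes the fibers of $\Pro_n^{BK}$ at the torus-fixed points $x_\lambda$ directly. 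I expect the bijectivity verification to be the main obstacle, since one must exploit features of the BK bundle beyond its rank to rule out extra set-theoretic preimages sitting above the non-reduced strata of the naive fibre product $V_n\times_{V_n/S_n}H_n$.
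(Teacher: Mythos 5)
Your strategy matches the paper's closely: equip $\Pro_n^{BK}$ with an $\Str_{H_n}$-algebra structure via the inductive formula, take the relative spectrum $Z_n$ (the paper's $X_n$), get Cohen--Macaulayness from flatness of $Z_n\to H_n$, verify normality and Gorensteinness, then build and study the finite birational morphism $Z_n\to IH_n$. The paper proves a characteristic-$p$ version first (Proposition \ref{Prop:bijective_normal_p}) and at the end of Section \ref{S_n!_proof} observes that the same steps go through over $\C$, so your direct characteristic-$0$ argument has the same structure. Your route to Gorensteinness via self-duality (Corollary \ref{Cor:Procesi_reln}) is a minor shortcut compared to the paper's inductive computation of $K_{X_n}=\pi_n^*\Str_n(-1)$; it can be made to work, but you should check that a $\C[V_n]$-linear isomorphism $\Pro_n\cong\Pro_n^*(1)$ is automatically $\Str_{Z_n}$-linear (this holds because $Z_n$ is reduced and irreducible and $\C[V_n]$ maps onto a dense subring of the function field of $Z_n$, so linearity propagates from the generic point using torsion-freeness of both sides).

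The serious gap is in bijectivity. Analyzing fibers of $\Pro_n^{BK}$ at the torus-fixed points $x_\lambda$ cannot suffice: those points lie only over $0\in V_n/S_n$, while the non-generic strata of $V_n/S_n$ are positive-dimensional, and in any case the fiber of $\Pro_n^{BK}$ at a point of $H_n$ does not directly control the fiber of $Z_n\to IH_n$. The mechanism the paper actually uses (Step 6 of the proof of Proposition \ref{Prop:bijective_normal_p}) is connectivity via Stein factorization: by induction $X_{n,n-1}\to IH_{n,n-1}$ is bijective, the fibers of $\hat{\alpha}_n\colon IH_{n,n-1}\to IH_n$ coincide set-theoretically with those of $\bar{\alpha}_n\colon H_{n,n-1}\to U_n$, and the latter are connected because $\alpha_n=\tau_n\circ\bar{\alpha}_n$ is a Stein factorization (Lemma \ref{Lem:alpha_decomp}). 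Since $\widetilde{\alpha}_n\colon X_{n,n-1}\to X_n$ is surjective, the fibers of the finite morphism $\iota_n\colon X_n\to IH_n$ are then connected, hence singletons. Your proposal does not identify this connectedness input, which is precisely what is needed to rule out extra set-theoretic preimages; without it the bijectivity claim remains unproved. A secondary point: your normality verification needs that $IH_n$ is smooth over $V_n^1/S_n$, which the paper imports from \cite[Lemma 3.3.1]{Haiman_n!} together with the explicit $n=2$ case, so you should either cite that or carry out the $H_2$ reduction you sketch.
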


%It turns out that once this is known, the $n!$ theorem follows. More precisely, (\ref{eq:BK_iso_Pro})
%by induction on $n$ implies that $\Pro^{BK}_n$ has a natural structure of a sheaf of algebras.
%Let $X_n$ denote the relative spectrum $\Spec_{\Str_{H_n}}(\Pro^{BK}_n)$. It is not hard to see
%that $X_n$ is Cohen-Macaulay and comes with a natural morphism $X_n\rightarrow IH_n$.
%In Section \ref{S_n!_proof} we will show that this map is an isomorphism. Then to deduce that
%$IH_n$ is Gorenstein, one can use the results of \cite{L_Procesi} on the classification of
%Procesi bundles.

In the remainder of this section we will discuss the ideas of our proof of
(\ref{eq:BK_iso_Pro}).

We will prove a  ``quantum analog'' of (\ref{eq:BK_iso_Pro}) in characteristic
$p\gg 0$. Namely, note that $H_{n,n-1}$ is a lagrangian subvariety in $H_n\times (H_{n-1}\times \C^2)$.
We will show that, over $\C$, the sheaf $\Str_{H_{n,n-1}}$ can be quantized to a filtered $\mathcal{D}_{n-1,\diamondsuit}$-$\mathcal{D}_{n}$-bimodule $\mathcal{D}_{n,n-1}$ (where $\mathcal{D}_n$ is the characteristic $0$ version of the quantization of $H_n$ used in
\cite{BK} to produce the Procesi bundle and $\mathcal{D}_{n-1,\diamondsuit}$ is an analogous quantization of
$H_{n-1}\times \C^2$). We will show that $\Gamma(\mathcal{D}_{n,n-1})=\Weyl(V_n)^{S_{n-1}}$, while the
higher cohomologies of $\mathcal{D}_{n,n-1}$ vanish.

Then we will show that the quantization $\D_{n,n-1}$ can be reduced mod $p$ for $p\gg 0$.
Let us denote corresponding quantization by $\D^\F_{n,n-1}$. We still have
$R\Gamma(\mathcal{D}_{n,n-1}^\F)=\Weyl(V_n^\F)^{S_{n-1}}$. Also we will show  that
$\D^\F_{n,n-1}$ gives rise to a splitting bundle for the restriction to $H_{n,n-1}^{\F,(1)}$ of the
Azumaya algebra arising from $\mathcal{D}_n^{\F,opp}\otimes \mathcal{D}_{n-1,\diamondsuit}^{\F}$. Using this splitting
bundle and the Bezrukavnikov-Kaledin construction of the Procesi bundle (and  some additional work) we deduce
(\ref{eq:BK_iso_Pro}) over $\F$ from here. Using this, we will prove Theorems
\ref{Thm:Procesi_inductive} and \ref{Thm:bijective_normalization} over $\C$.

\subsection{Conventions and notation}
In this paper we work with various base fields and rings. They include $\C$, $\F:=\overline{\F}_p$
for $p$ sufficiently large, a large finite localization $\Ring$ of $\Z$ and some others.
The base ring is indicated as a supercript, unless it is $\C$, in which case we skip
the superscript $\C$.

All pullback and pushforward functors are derived, while $\Gamma$ denotes the usual
non-derived global section functor.

{\it Notation for varieties and algebras}. Throughout the paper we use the following notation
for varieties and (commutative and noncommutative) algebras:
\begin{itemize}
\item $V_n:=(\C^2)^{\oplus n}$. By $x_1,y_1,\ldots, x_n,y_n$ we denote the natural basis in $V_n$
so that $\C[V_n]=\C[x_1,\ldots,x_n,y_1,\ldots,y_n]$.
\item $V_n^0$ is the regular locus in $V_n$ consisting of $n$-tuples of pairwise distinct points.
\item $V_n^1$ is the open locus in $V_n$ consisting of all points whose stabilizer in $S_n$
is either trivial or is generated by a single transposition.
\item $H_n$ is the Hilbert scheme of $n$ points in the plane.
\item $H_{n,n-1}$  is the nested Hilbert scheme parameterizing inclusions $J'\subset J$
with $J\in H_n, J'\in H_{n-1}$.
\item $IH_n$ is the isospectral Hilbert scheme, i.e., $V_n\times_{V_n/S_n}H_n$ with reduced
scheme structure.
\item $U_n$ is the universal degree $n$ family over $H_n$.
\item $\Weyl(V_n)$ is the Weyl algebra of the symplectic vector space $V_n$.
\item $\A_n:=\Weyl(V_n)^{S_n}$.
\item $\widetilde{A}_n:=\C[V_n]\# S_n$.
\item $X_n$ is the relative spectrum of the Procesi bundle on $H_n$, constructed in
Section \ref{S_n!_proof}.
\item $X_{n,n-1}:=X_{n-1}\times_{H_{n-1}}H_{n,n-1}$.
\end{itemize}

We also often use the subscript $\diamondsuit$. The meaning is the multiplication by
$\C^2$ for varieties or by $\C[V_2]$ or $\Weyl(V_2)$ for the algebras above.
For example, $V^0_{n-1,\diamondsuit}$ stands for $V^0_{n-1}\times \C^2$ (where the second
factor corresponds to the last two coordinates) and  $\A_{n-1,\diamondsuit}$ stands for
$\A_{n-1}\otimes \Weyl(V_2)$.

As usual, the superscript ``$^{(1)}$'' indicates the Frobenius twist.

{\it Notation for morphisms}:
\begin{itemize}
\item $\alpha_n$ is the natural morphism $H_{n,n-1}\rightarrow H_n$.
\item $\bar{\alpha}_n$ is the natural morphism $H_{n,n-1}\rightarrow U_n$.
\item $\widetilde{\alpha}_n$ is the natural morphism $X_{n,n-1}\rightarrow X_n$ defined in
Section \ref{S_n!_proof}.
\item $\beta_n$ is the natural morphism $H_{n,n-1}\rightarrow H_{n-1,\diamondsuit}$.
\item $\widetilde{\beta}_n$ is the natural morphism $X_{n,n-1}\rightarrow X_{n-1,\diamondsuit}$.
\item $\eta_n$ is the natural morphism $IH_n\rightarrow H_n$.
\item $\mu$ stands for a moment  map.
\item $\pi_n$ is the natural morphism $X_n\rightarrow H_n$  introduced in Section
\ref{S_n!_proof}. Similarly, $\pi_{n,n-1}:X_{n,n-1}\rightarrow H_{n,n-1}$ is the natural
morphism.
\item $\rho_n:H_n\rightarrow V_n/S_{n}$ is the Hilbert-Chow morphism. Similarly, we write
$\widetilde{\rho}_n,\widetilde{\rho}_{n,n-1},\bar{\rho}_n$ for the natural morphisms
$X_n\rightarrow V_n, X_{n,n-1}\rightarrow V_n, U_n\rightarrow V_n/S_{n-1}$, respectively.
\item $\tau_n$ is also the natural morphism $U_n\rightarrow H_n$.
\end{itemize}

Finally, let us list our notation for various sheaves.

\begin{itemize}
\item $\D_n$ denotes a certain microlocal quantization of $H_n$ recalled in Section \ref{SS_quant_general}.
\item $\underline{\D}^\F_n$ denotes a Frobenius constant quantization of $H_n^\F$ obtained from $\D^\F_n$.
\item $\D_{n,n-1}$ denotes a certain microlocal quantization of the lagrangian subvariety $H_{n,n-1}
\subset H_n\times H_{n-1,\diamondsuit}$ constructed in Section \ref{S_quant_nested},
and $\underline{\D}_{n,n-1}^\F$ is the ``Frobenius constant'' version.
\item $K_Y$ denotes the canonical bundle of a smooth variety $Y$.
\item $\Str_n,\Str_{n,n-1}$ denote the structure sheaves of $H_n,H_{n,n-1}$. More generally,
we write $\Str_n(k)$ for the line bundle on $H_n$ corresponding to $k\in \Z$ and
$\Str_{n,n-1}(k,\ell)$ for $\beta_n^*\Str_{n-1}(k)\otimes \alpha_n^* \Str_n(\ell)$, a line bundle on
$H_{n,n-1}$.
\item $\Pro_n$ is the Procesi bundle on $H_n$ whose construction is recalled in
Section \ref{SS_Procesi_constr}.
\item $\mathcal{T}_n:=\tau_{n*}\Str_{U_n}$ is the tautological bundle on $H_n$.
\end{itemize}

{\bf Acknowledgements}: I would like to thank Roman Bezrukavnikov, Pavel Etingof,
Victor Ginzburg, Evgeny Gorsky, Andrei Negut, Alexei Oblomkov, and Lev Rozansky
for stimulating discussions. I would also like to thank Roman Bezrukavnikov for the discussion that 
allowed me to find a fatal mistake in a previous version of this paper that claimed a proof
of the $n!$ theorem.  This work has been funded by the  Russian Academic
Excellence Project '5-100'.

\section{Preliminaries on Hilbert schemes}\label{S_Hilbert}
\subsection{Hilbert scheme $H_n$}\label{SS_Hilbert_general}
First of all, let us recall a construction of $H_n$ via the Hamiltonian reduction. Consider the space
$R:=\operatorname{End}(\C^n)\oplus \C^n$. This space has a natural action of $G:=\operatorname{GL}_n$.

Consider the cotangent bundle $T^*R=R\oplus R^*$. The group $G$ acts on $T^*R$ and the action is Hamiltonian.
The moment map can be described as follows. First, we can identify $\End(\C^n)^*$ with $\End(\C^n)$
via the trace form. Then we can view an element of $T^*R$ as a quadruple $(A,B,i,j)$ for $A,B\in \End(\C^n),
i\in \C^n, j\in \C^{n*}$. The moment map $\mu:T^*R\rightarrow \g$ is given by $\mu(A,B,i,j)=AB+ij$.
Now we are going  to describe the pull-back map $\mu^*:\g\rightarrow \C[T^*R]$.
Consider the velocity vector field map $\g\rightarrow \operatorname{Vect}(R), \xi\mapsto \xi_R$.
We can view $\operatorname{Vect}(R)$ as a subspace of $\C[T^*R]=\C[R\oplus R^*]$ that consists of all functions that have
degree $1$ in $R^*$. It is easy to show that  $\mu^*(\xi)= \xi_R$.

Consider the character $\theta:=\det^{-1}$. By definition, the $\theta$-stable locus in $T^*R$
consists of the quadruples $(A,B,i,j)$ such that $\C\langle A,B\rangle i=\C^n$. This locus
will be denoted by $(T^*R)^{\theta-s}$. Note that the action
of $G$ on $(T^*R)^{\theta-s}$ is free. For $(A,B,i,j)\in \mu^{-1}(0)^{\theta-s}:=\mu^{-1}(0)
\cap (T^*R)^{\theta-s}$ we necessarily have $j=0$ and hence $[A,B]=0$.

By definition, $H_n$ is the GIT Hamiltonian reduction of $T^*R$ by the action of $G$
with character $\theta$. Equivalently, $H_n$ is the GIT quotient $\mu^{-1}(0)^{\theta-s}/G$. Below we will
write $\Str_n$ for the structure sheaf $\Str_{H_n}$ of $H_n$.
We have an ample line bundle $\mathcal{O}_n(1)$
on $H_n$ given by $\varphi_*(\Str_{\mu^{-1}(0)^{\theta-s}}\otimes \det)^G$, where we write
$\varphi$ for the quotient morphism $\mu^{-1}(0)^{\theta-s}\rightarrow H_n$.

We have a natural morphism of quotients $\rho_n: H_n\rightarrow \mu^{-1}(0)/\!/G$. The target variety is identified
with $V_n/S_n$ and the morphism is the Hilbert-Chow map that sends a codimension $n$ ideal
to its support with multiplicities. Also recall that $H_n$ is a resolution of singularities
of $V_n/S_n$. The morphism $\rho_n$ is an isomorphism over the smooth locus $(V_n/S_n)^{reg}$.
This locus coincides with $V_{n}^{0}/S_n$, where we write $V_n^0$ for the locus of $n$ different points in $\C^2$.

\begin{Ex}\label{Ex_Hilb2}
It is easy to see that $H_2=\operatorname{Bl}_\Delta(V_2/S_2)$, the blow-up
of the diagonal.
\end{Ex}

Also note that $H_n$ comes with several additional structures. For examples, as a Hamiltonian
reduction of a smooth symplectic variety by a free $G$-action, $H_n$ carries a natural symplectic
form, $\omega$. Since $H_n$ is a symplectic resolution of a normal variety, by the Grauert-Riemenschneider
theorem, we have the following claim.

\begin{Lem}\label{Lem:Hilbert_cohom_vanishing}
We have $H^i(H_n, \Str_{n})=0$ for $i>0$ and $\Gamma(\Str_{n})=\C[V_n]^{S_n}$.
\end{Lem}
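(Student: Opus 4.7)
The plan is to apply the Grauert–Riemenschneider vanishing theorem to the Hilbert–Chow morphism $\rho_n : H_n \to V_n/S_n$, combined with the fact that $H_n$ is symplectic (hence has trivial canonical bundle) and $V_n/S_n$ is normal and affine.

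First I would observe that the symplectic form $\omega$ on $H_n$ produced by Hamiltonian reduction is non-degenerate, so its top exterior power $\omega^{\wedge n}$ trivializes the canonical bundle $K_{H_n}$. Second, $\rho_n$ is a proper (in fact projective, via the ample bundle $\Str_n(1)$) birational morphism from the smooth variety $H_n$ to $V_n/S_n$. Therefore Grauert–Riemenschneider gives
\begin{equation*}
R^i\rho_{n*}K_{H_n} = 0 \quad \text{for } i > 0,
\end{equation*}
and via the trivialization this becomes $R^i\rho_{n*}\Str_n = 0$ for $i>0$.

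Next I would pass from the derived pushforward statement to the statement about cohomology. Since $V_n/S_n = \Spec \C[V_n]^{S_n}$ is affine, the Leray spectral sequence degenerates and
\begin{equation*}
H^i(H_n,\Str_n) = H^i(V_n/S_n, R\rho_{n*}\Str_n) = H^0(V_n/S_n, R^i\rho_{n*}\Str_n),
\end{equation*}
which vanishes for $i>0$ by the previous step. For the degree-zero part, normality of $V_n/S_n$ together with the fact that $\rho_n$ is proper birational (an isomorphism over the smooth locus $V_n^0/S_n$) forces $\rho_{n*}\Str_n = \Str_{V_n/S_n}$ by Zariski's main theorem or the standard argument that a finite birational map onto a normal variety is an isomorphism. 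Taking global sections yields $\Gamma(H_n,\Str_n) = \C[V_n]^{S_n}$.

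There is no real obstacle here; the only point that deserves attention is verifying that $K_{H_n}$ really is trivial, which is immediate from the existence of a global symplectic form, and that $V_n/S_n$ is normal, which follows since $V_n$ is smooth and $S_n$ is a finite group acting on it (quotients of normal varieties by finite group actions are normal). Everything else is a direct citation of Grauert–Riemenschneider and the Leray spectral sequence in the affine base setting.
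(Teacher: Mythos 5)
Your argument is correct and is precisely the paper's justification: the paper's one-line remark "Since $H_n$ is a symplectic resolution of a normal variety, by the Grauert--Riemenschneider theorem" encapsulates exactly the steps you spell out (triviality of $K_{H_n}$ from $\omega$, GR vanishing for $\rho_n$, affineness of $V_n/S_n$ via Leray, and $\rho_{n*}\Str_n=\Str_{V_n/S_n}$ by normality and birationality). No difference in approach.
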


The variety $H_n$ also comes with an action of the two-dimensional torus $T$. It is induced from
the $T$-action on $T^*R$ given by $(t_1,t_2)(r,\alpha)=(t_1^{-1}r, t_2^{-1}\alpha)$ for
$r\in R,\alpha\in R^*$. For the induced action on $H_n$ we have $(t_1,t_2).\omega=t_1t_2\omega$.
In particular, the action of $T_h:=\{(t,t^{-1})\}\subset T$ preserves $\omega$ and, moreover,
is Hamiltonian. The action of $T_c:=\{(t,t)\}\subset T$ is contracting and rescales
the symplectic form $\omega$ by $t^2$. Below this action will be  called {\it contracting}.

The $T$-fixed points correspond to the monomial ideals in $\C[x,y]$ and hence
are labelled by the partitions of $n$: for a partition $\lambda$ of $n$, we write
$x_{\lambda}$ for the corresponding fixed point.

Now let us recall how to compute the group $\operatorname{Pic}(H_n)$ and the space
$H^2(H_n,\C)$. Pick the subtorus of the form $\{(t,t^N)\}\subset T$ for $N\gg 0$, it is contracting
and has finitely many fixed points.   The open cell for the Bialynicki-Birula decomposition corresponds to the  fixed
point $x_{(n)}$. And there is one codimension $1$ cell corresponding
to  $x_{(n-1,1)}$. It follows that $\operatorname{Pic}(H_n)\cong \Z$, in fact, this group is
generated by the line bundle $\Str_{n}(1)$ constructed above. And $H^2(H_n,\C)\cong \C$
is generated by $c_1(\Str_{n}(1))$. Note that the closure of the codimension 1 cell coincides
with $H_n\setminus \rho_n^{-1}(V_n^0/S_n)$.

Let us describe the structure of $H_n$ over a neighborhood of a generic point in $(V_n/S_n)^{sing}$.
The following lemma is classical.

\begin{Lem}\label{Lem:Hilb_local}
Let $b\in V_n/S_n$ be an $n$-tuple with precisely one pair of repeated points
and $(V_n/S_n)^{\wedge_b}$ denote its formal neighborhood in $V_n/S_n$.
Then $\rho_n^{-1}\left((V_n/S_n)^{\wedge_b}\right)$ is isomorphic to the preimage of $(V_2/S_2\times V_{n-2})^{\wedge_0}$  in $\operatorname{Bl}_{\Delta}(V_2/S_2)\times
V_{n-2}$.
\end{Lem}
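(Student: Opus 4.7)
The plan is to exploit the Chinese Remainder theorem for finite-codimension ideals in $\C[x,y]$: if the support multiset splits into disjoint clusters, the ideal factors into a product of local ideals, one per cluster. Combined with the identification $H_2=\operatorname{Bl}_\Delta(V_2/S_2)$ from Example \ref{Ex_Hilb2}, this produces the desired local model.

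First I would decompose the base. Up to $T$-translation I may take $b$ to be represented by the ordered tuple $(p,p,p_3,\ldots,p_n)\in V_n$ with $p,p_3,\ldots,p_n$ pairwise distinct. The stabilizer of this lift in $S_n$ is exactly the $S_2$ permuting the first two entries, so by a standard slice argument I obtain
\[
(V_n/S_n)^{\wedge_b} \;\cong\; (V_2/S_2)^{\wedge_0}\times (V_{n-2})^{\wedge_0} \;=\; (V_2/S_2\times V_{n-2})^{\wedge_0},
\]
where the last $n-2$ factors of $V_{n-2}=(\C^2)^{n-2}$ have been translated so that each $p_i$ corresponds to the origin.

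Next I would decompose $\rho_n^{-1}$ of this formal neighborhood. A point of the preimage is a codimension-$n$ ideal $J\subset \C[x,y]$ whose support (with multiplicities) is close to $\{p,p,p_3,\ldots,p_n\}$. Choose disjoint opens $U_0,U_3,\ldots,U_n\subset \C^2$ around $p,p_3,\ldots,p_n$ respectively; for $J$ in a small enough neighborhood, $\operatorname{Supp}(\C[x,y]/J)\subset U_0\sqcup U_3\sqcup\cdots\sqcup U_n$. By the Chinese Remainder theorem one gets a canonical factorization $J = J_0\cap J_3\cap\cdots\cap J_n$, with $J_0$ of codimension $2$ supported in $U_0$ and each $J_i$ (for $i\geq 3$) the maximal ideal of a point in $U_i$. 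The assignment $J\mapsto (J_0,J_3,\ldots,J_n)$ then gives an isomorphism from $\rho_n^{-1}\!\left((V_n/S_n)^{\wedge_b}\right)$ onto the locus in $H_2\times \C^2\times\cdots\times \C^2$ lying over $(V_n/S_n)^{\wedge_b}$, and by Example \ref{Ex_Hilb2} this locus is precisely the preimage of $(V_2/S_2\times V_{n-2})^{\wedge_0}$ inside $\operatorname{Bl}_\Delta(V_2/S_2)\times V_{n-2}$.

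The only mildly subtle point is that the Chinese Remainder decomposition must be scheme-theoretic, not just a bijection on closed points. However, since the supports stay inside the fixed disjoint opens $U_0,U_3,\ldots,U_n$ throughout the formal neighborhood, the orthogonal idempotents in $\C[x,y]/J$ realizing the decomposition deform flatly with $J$, so the factorization is an honest isomorphism of formal schemes. Everything else is bookkeeping.
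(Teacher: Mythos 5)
The paper gives no proof here, merely labeling the lemma as classical, so there is nothing to compare against in the text itself. Your argument is the standard one: an \'etale/formal slice identification of $(V_n/S_n)^{\wedge_b}$ with $(V_2/S_2\times V_{n-2})^{\wedge_0}$, followed by the local-to-global decomposition of punctual subschemes of $\C^2$ whose support splits into disjoint clusters, with Example~\ref{Ex_Hilb2} identifying the length-$2$ local factor with $\operatorname{Bl}_\Delta(V_2/S_2)$.

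Two small points worth tightening. First, the phrasing ``choose disjoint opens $U_0,U_3,\dots,U_n$ \dots\ for $J$ in a small enough neighborhood'' mixes analytic-neighborhood language with the formal setting; the cleaner formulation is that over the formal completion of $H_n$ along $\rho_n^{-1}(b)$, the pushforward $\mathcal{T}_n$ of the universal family is a sheaf of finite $\Str$-algebras whose relative spectrum has connected components labelled by $\{p\},\{p_3\},\dots,\{p_n\}$, and the corresponding system of orthogonal idempotents (unique, since they are the indicator functions of those components) gives the scheme-theoretic product decomposition. Second, you should state that the resulting morphism to the formal completion of $H_2\times V_{n-2}$ along $\rho_2^{-1}(0)\times\{0\}$ is an isomorphism because it has an evident inverse given by intersecting the component ideals --- this is worth one sentence so the reader sees it is not merely a bijection on points. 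With those touch-ups the argument is complete and, as far as one can tell, exactly the classical proof the author has in mind.
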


To finish the section, let us discuss possible rings of definition. The scheme $H_n$ is defined
over $\Z$. For general reasons, the corresponding $\Ring$-form  $H_n^{\Ring}$
is smooth and symplectic over a finite localization
$\Ring$ of $\Z$. Further it is a resolution of singularities of $V_n^\Ring/S_n$.
We will also change the base from $\Ring$ to $\F:=\overline{\F}_p$ for $p\gg 0$
and all results in the previous paragraphs of this section continue to hold.

%To finish this section let us point out that the same construction works over $\F:=\overline{\F}_p$
%(for $p$ sufficiently large) instead of $\C$. So we can consider the variety $H^\F_{n}$ that parameterizes
%the codimension $n$ ideals in $\F[x,y]$. It is smooth, symplectic,  and is a resolution of singularities
%for $V_n^\F/S_n$.

\subsection{Nested Hilbert scheme $H_{n,n-1}$}
Recall the nested Hilbert scheme $H_{n,n-1}$ parameterizing pairs of ideals $J\subset J'\subset \C[x,y]$
such that $\dim \C[x,y]/J=n, \dim \C[x,y]/J'=n-1$.  We have the following
fundamental result on $H_{n,n-1}$, \cite[Section 0.2]{Cheah} (the theorem was actually proved earlier
by A. Tikhomirov in an unpublished preprint).

\begin{Prop}\label{Prop:two_step_smooth}
The scheme $H_{n,n-1}$ is smooth and irreducible of dimension $2n$.
\end{Prop}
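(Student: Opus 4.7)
The plan is to deduce smoothness and the dimension count from a tangent space bound, and to obtain irreducibility by exhibiting a dense open smooth subset of dimension $2n$. For irreducibility and a lower bound on dimension, I would consider the open subset $H_{n,n-1}^{\circ}:=\alpha_n^{-1}(\rho_n^{-1}(V_n^0/S_n))$ consisting of pairs $(J\subset J')$ for which $\C[x,y]/J$ is reduced. On this locus, $J$ corresponds to an unordered $n$-tuple of pairwise distinct points of $\C^2$, and specifying $J'\supset J$ with $\dim J'/J=1$ amounts to choosing one of the $n$ points to remove, so $H_{n,n-1}^{\circ}\cong V_n^0/S_{n-1}$, which is smooth, irreducible, of dimension $2n$. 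Density of $H_{n,n-1}^{\circ}$ in $H_{n,n-1}$ I would verify by exhibiting one-parameter deformations that separate the support, invoking the local product description of $H_n$ near a generic point of $(V_n/S_n)^{sing}$ provided by Lemma \ref{Lem:Hilb_local}.

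Next I would bound $\dim T_{(J,J')}H_{n,n-1}\leq 2n$ pointwise. A first-order deformation of $(J\subset J')$ is a compatible pair $(\phi,\phi')\in T_JH_n\oplus T_{J'}H_{n-1}$; by Fogarty's theorem $T_JH_n=\Hom_{\C[x,y]}(J,\C[x,y]/J)$ has dimension $2n$ and $T_{J'}H_{n-1}=\Hom_{\C[x,y]}(J',\C[x,y]/J')$ has dimension $2(n-1)$. The compatibility fits the tangent space into the exact sequence
\begin{equation*}
0\to T_{(J,J')}H_{n,n-1}\to T_JH_n\oplus T_{J'}H_{n-1}\xrightarrow{\delta}\Hom_{\C[x,y]}(J,\C[x,y]/J'),
\end{equation*}
where $\delta$ records the difference of the two natural extensions of $(\phi,\phi')$ to a first-order deformation of the composition $J\hookrightarrow J'\twoheadrightarrow \C[x,y]/J'$. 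From the short exact sequence $0\to J'/J\to \C[x,y]/J\to \C[x,y]/J'\to 0$, the regularity of $\C[x,y]$ (giving projective dimension $2$ for the Artinian quotients), and Serre duality for coherent sheaves with finite support on $\C^2$ (whose canonical bundle is trivial), one computes $\dim\Hom_{\C[x,y]}(J,\C[x,y]/J')=2(n-1)$ and verifies surjectivity of $\delta$. This yields $\dim T_{(J,J')}H_{n,n-1}\leq 2n+2(n-1)-2(n-1)=2n$.

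Combining the two previous paragraphs, at every point the local dimension is at least $2n$ while the tangent space has dimension at most $2n$, so $H_{n,n-1}$ is smooth of pure dimension $2n$; irreducibility then follows from density of the connected smooth subset $H_{n,n-1}^{\circ}$.

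The main obstacle is the tangent space computation, specifically the uniform surjectivity of $\delta$ and the identification of $\dim\Hom_{\C[x,y]}(J,\C[x,y]/J')$ at points where $\C[x,y]/J$ has complicated non-reduced local structure, since the $\Ext$-long-exact-sequence bookkeeping becomes delicate. As a robust fallback, Lemma \ref{Lem:Hilb_local} reduces the analysis near a pair of colliding points to the explicit case $n=2$, which is handled by hand using the identification $H_2=\operatorname{Bl}_\Delta(V_2/S_2)$ from Example \ref{Ex_Hilb2} and a direct description of $H_{2,1}$ as the blow-up of $V_2/S_2\times \C^2$ along the small diagonal; an induction on the combinatorics of coincidence patterns in $\C[x,y]/J$ then propagates smoothness to the whole variety.
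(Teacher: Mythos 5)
The paper does not prove this proposition: it is stated as a known fact and attributed to Cheah (and, in a remark, to an earlier unpublished preprint of Tikhomirov). So there is no ``paper's proof'' to compare against; you are effectively re-deriving a cited result.

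Your overall plan---exhibit the open locus $H_{n,n-1}^\circ\cong V_n^0/S_{n-1}$ as a $2n$-dimensional smooth dense subset, then bound the tangent space everywhere by $2n$---is the standard deformation-theoretic route, and the exact sequence
\[
0\to T_{(J,J')}H_{n,n-1}\to T_JH_n\oplus T_{J'}H_{n-1}\xrightarrow{\delta}\Hom_{\C[x,y]}(J,\C[x,y]/J')
\]
is correct. The dimension identity $\dim\Hom(J,\C[x,y]/J')=2(n-1)$ does in fact hold at every point: by local duality $\Ext^2(\C[x,y]/J,\C[x,y]/J')\cong\Hom(\C[x,y]/J',\C[x,y]/J)^*$, the latter identifies with $\operatorname{Ann}_{\C[x,y]/J}(J')=\ker(\cdot\bar f_0)$ for $\bar f_0$ a generator of the one-dimensional module $J'/J\subset\C[x,y]/J$, and rank--nullity gives dimension $n-1$; combining with the Euler characteristic $\chi(\C[x,y]/J,\C[x,y]/J')=0$ yields the count. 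But none of this is actually in your write-up---you assert it. More seriously, the surjectivity of $\delta$ is precisely the content of the smoothness theorem, not a routine verification. You must show that the images of the two components $\pi_*:\Hom(J,\C[x,y]/J)\to\Hom(J,\C[x,y]/J')$ and $\operatorname{res}:\Hom(J',\C[x,y]/J')\to\Hom(J,\C[x,y]/J')$ together span; the respective cokernels land in $\Ext^1(J,J'/J)$ and $\Ext^1(J'/J,\C[x,y]/J')$, and getting these to cancel against each other uniformly over all $(J,J')$, including very non-reduced ones, is the delicate step. You honestly flag this as ``the main obstacle,'' but as written it is not a gap in bookkeeping---it is the whole theorem.

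The proposed fallback does not rescue this. Lemma \ref{Lem:Hilb_local} describes $H_n$ only over the formal neighborhood of a \emph{generic} point of the singular locus, i.e., an $n$-tuple with exactly one pair of repeated points; it says nothing about deeper strata (triple or higher coincidences, non-reduced local rings of higher embedding dimension). Your ``induction on the combinatorics of coincidence patterns'' would have to analyze precisely those strata, which is again the hard part; as stated it is a restatement of the problem rather than a proof. So the proposal is a reasonable sketch of the known approach, with the correct skeleton and a correct (though unjustified) intermediate dimension count, but it does not contain a proof of the one step that matters.
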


Here and below to simplify the notation we will write $H_{n-1,\diamondsuit}$
for $H_{n-1}\times \C^2$.
As was mentioned in the introduction, the variety $H_{n,n-1}$ comes with two morphisms:
$\alpha_n: H_{n,n-1}$ that sends $(J\subset J')$ to $J$ and $\beta_{n}: H_{n,n-1}\rightarrow
H_{n-1,\diamondsuit}$, which sends $(J\subset J')$ to the pair of $J'$ and the support of $J'/J$.

Consider the  morphism $\rho_{n,n-1}:H_{n,n-1}\rightarrow V_n/S_{n-1}$
given by $\rho_{n-1}\circ \beta_n$. Note that it is an isomorphism over $V_n^0/S_{n-1}$.

\begin{Lem}\label{Lem:nested_global}
The morphism $\rho_{n,n-1}$ gives rise to an isomorphism
$\C[V_n]^{S_{n-1}}\cong \C[H_{n,n-1}]$. Moreover, the composition $\rho_n\circ\alpha_n:H_{n,n-1}
\rightarrow V_n/S_n$
factors as $H_{n,n-1}\xrightarrow{\rho_{n,n-1}} V_n/S_{n-1}\rightarrow V_n/S_n$.
\end{Lem}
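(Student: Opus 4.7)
The plan is to handle the two assertions separately: the factorization by checking on closed points, and the ring isomorphism by a Hartogs-type argument exploiting the normality of $V_n/S_{n-1}$.

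For the factorization, take a closed point $(J\subset J')\in H_{n,n-1}$. The morphism $\rho_n\circ \alpha_n$ sends it to the class of $\operatorname{Supp}(J)$ (counted with multiplicities) in $V_n/S_n$, while the composite $\pi\circ\rho_{n,n-1}$, with $\pi\colon V_n/S_{n-1}\to V_n/S_n$ the natural projection, sends it to the class of $(\operatorname{Supp}(J'), p) \in V_{n-1}/S_{n-1}\times \C^2 = V_n/S_{n-1}$, where $p:=\operatorname{Supp}(J'/J)$. The short exact sequence
\[
0\to J'/J \to \C[x,y]/J \to \C[x,y]/J'\to 0
\]
of $\C[x,y]$-modules yields $\operatorname{Supp}(J) = \operatorname{Supp}(J') \sqcup \{p\}$ as multisets, so the two morphisms agree on closed points. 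Since $H_{n,n-1}$ is reduced (it is smooth by Proposition \ref{Prop:two_step_smooth}) and the target is separated, they agree as morphisms of schemes.

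For the ring isomorphism, $V_n/S_{n-1} = V_{n-1}/S_{n-1}\times \C^2$ is normal, being a product of a finite quotient of affine space and $\C^2$. The complement $V_n\setminus V_n^0$ is the union of the pairwise diagonals $\{z_i=z_j\}$ for $i\ne j$, each of codimension $2$ in $V_n$; since the map $V_n\to V_n/S_{n-1}$ is finite and $S_{n-1}$ preserves the decomposition of $V_n$ into $V_n^0$ and its complement, the complement of $V_n^0/S_{n-1}$ in $V_n/S_{n-1}$ still has codimension $\ge 2$. The pullback $\rho_{n,n-1}^*\colon \C[V_n]^{S_{n-1}}\to \C[H_{n,n-1}]$ is injective by the dominance of $\rho_{n,n-1}$. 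For surjectivity, given $f\in \C[H_{n,n-1}]$, its restriction to $\rho_{n,n-1}^{-1}(V_n^0/S_{n-1})$ transports through the isomorphism to a regular function on $V_n^0/S_{n-1}$, and by Hartogs' theorem applied to the normal variety $V_n/S_{n-1}$ this function extends uniquely to some $\tilde f\in \C[V_n]^{S_{n-1}}$. Then $\rho_{n,n-1}^*\tilde f$ agrees with $f$ on the dense open $\rho_{n,n-1}^{-1}(V_n^0/S_{n-1})$ and hence everywhere, since $H_{n,n-1}$ is irreducible.

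The one technical input that does any real work is the codimension-$2$ estimate for the boundary in $V_n/S_{n-1}$, which feeds into Hartogs; the rest is formal given smoothness and irreducibility of $H_{n,n-1}$ together with normality of the target. A parallel route would verify properness of $\rho_{n,n-1}$ directly (via properness of $\beta_n$) and conclude $\rho_{n,n-1,*}\Str_{H_{n,n-1}}=\Str_{V_n/S_{n-1}}$ by a Zariski main theorem argument, but the Hartogs approach bypasses that properness check entirely.
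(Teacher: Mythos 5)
Your proof is correct, but it takes a different route from the paper's on the first claim. The paper dispatches the isomorphism $\C[V_n]^{S_{n-1}}\cong\C[H_{n,n-1}]$ in one line by invoking Zariski's main theorem: $\rho_{n,n-1}$ is birational and projective, and $V_n/S_{n-1}$ is normal, so $\rho_{n,n-1*}\Str_{H_{n,n-1}}=\Str_{V_n/S_{n-1}}$; taking global sections gives the result. You instead run a Hartogs extension argument: you observe that the locus where $\rho_{n,n-1}$ fails to be an isomorphism sits over a subset of $V_n/S_{n-1}$ of codimension at least two (because each pairwise diagonal in $V_n$ already has codimension two), so a global function on $H_{n,n-1}$ restricts over $V_n^0/S_{n-1}$ to a function that extends across the boundary by normality, and density of the open set in the integral scheme $H_{n,n-1}$ pins down the equality. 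Both approaches lean on normality of $V_n/S_{n-1}$, but they buy different things: the Zariski route requires one to know (or check) projectivity of $\rho_{n,n-1}$, which is true but not entirely free — it is most cleanly obtained after the factorization statement, so the paper's extremely terse proof tacitly reorders the logic; your Hartogs route avoids any properness input but depends crucially on the codimension-$\geq 2$ estimate, which is special to this geometry and does not persist in, say, $U_n$ where the analogous boundary is a divisor. Your treatment of the factorization by checking on closed points of the reduced scheme $H_{n,n-1}$ against the separated target, using the length-additivity from the short exact sequence, is exactly the kind of argument the paper's ``straightforward'' is pointing at.
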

\begin{proof}
The first claim holds because $V_n/S_{n-1}$ is normal and the morphism
$\rho_{n,n-1}$ is birational and projective.
The second claim is straightforward.
\end{proof}

\begin{Ex}\label{Ex:nested2}
Let us consider the case of $n=2$. Here $H_{1,\diamondsuit}=V_2$ and $H_{2}=\operatorname{Bl}_\Delta(V_2/S_2)$.
In fact, $H_{n,n-1}=\operatorname{Bl}_\Delta(V_2)$. The morphism $\beta_2$ is the natural
projection $\operatorname{Bl}_\Delta(V_2)\rightarrow V_2$. The morphism $\alpha_2$ is
the quotient morphism for the action of $S_2$ on $\operatorname{Bl}_\Delta(V_2)$
that is induced from $S_2$ acting on $V_2$.
\end{Ex}

The morphism $\alpha_n\times \beta_n: H_{n,n-1}\hookrightarrow
H_n\times H_{n-1,\diamondsuit}$ is an embedding.

\begin{Lem}\label{Lem:lagrangian}
The subvariety $H_{n,n-1}\subset H_{n}\times H_{n-1,\diamondsuit}$ is lagrangian, where we consider $H_n$
with the opposite symplectic form.
\end{Lem}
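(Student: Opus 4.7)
The plan is a standard dimension-count plus open-set argument. By Proposition \ref{Prop:two_step_smooth}, $H_{n,n-1}$ is smooth and irreducible of dimension $2n$, while the ambient variety $H_n\times H_{n-1,\diamondsuit}$ has dimension $2n+2(n-1)+2=4n$. Thus $H_{n,n-1}$ already has exactly half the ambient dimension, and it only remains to verify that the restriction of the ambient symplectic form to $H_{n,n-1}$ vanishes. Since $H_{n,n-1}$ is smooth and irreducible, it suffices to check this on any dense open subset.

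For the open subset I would take $U:=\rho_{n,n-1}^{-1}(V_n^0/S_{n-1})$, on which $\rho_{n,n-1}$ is an isomorphism onto $V_n^0/S_{n-1}$ (it is dense since $\rho_{n,n-1}$ is birational). Points of $U$ correspond to ordered $n$-tuples of pairwise distinct points in $\C^2$ modulo the action of $S_{n-1}$ permuting the first $n-1$ points. Under this identification, $\alpha_n|_U$ becomes the natural projection $V_n^0/S_{n-1}\to V_n^0/S_n\subset H_n$ forgetting the marking, while $\beta_n|_U$ becomes the isomorphism $V_n^0/S_{n-1}\cong V_{n-1}^0/S_{n-1}\times\C^2\subset H_{n-1,\diamondsuit}$ sending the marked point to the $\C^2$ factor.

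On the smooth loci of the two quotients, the Hamiltonian-reduction symplectic form on $H_n$ coincides with the descent of the standard form $\omega_{V_n}:=\sum_{i=1}^n dx_i\wedge dy_i$ from $V_n^0$, and the symplectic form on $H_{n-1,\diamondsuit}$ similarly descends from $\sum_{i=1}^{n-1}dx_i\wedge dy_i+dx_n\wedge dy_n=\omega_{V_n}$. Pulling the ambient form $(-\omega_{H_n})\oplus\omega_{H_{n-1,\diamondsuit}}$ back along the étale cover $V_n^0\to U$ therefore yields $-\omega_{V_n}+\omega_{V_n}=0$, and the vanishing on $U$ extends to all of $H_{n,n-1}$ by smoothness and irreducibility.

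The only point requiring care, rather than a genuine obstacle, is the identification of the Hamiltonian-reduction symplectic form on $H_n$ with $\omega_{V_n}$ on $V_n^0/S_n$. This is classical for this particular Nakajima quiver variety: one writes down an explicit local section of the quotient map $\mu^{-1}(0)^{\theta\textrm{-s}}\to H_n$ over $V_n^0/S_n$ (corresponding to $n$ distinct points in $\C^2$) and computes that the Marsden--Weinstein form pulls back to $\omega_{V_n}$ along this section.
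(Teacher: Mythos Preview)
Your proof is correct and follows essentially the same approach as the paper: check isotropy on the dense open set $\rho_{n,n-1}^{-1}(V_n^0/S_{n-1})\cong V_n^0/S_{n-1}$, where the claim reduces to the graph of $V_n^0/S_{n-1}\to V_n^0/S_n$ being lagrangian, and then extend by irreducibility. The paper states this in two sentences without the explicit dimension count or the verification that the reduced form agrees with $\omega_{V_n}$, but the underlying argument is the same.
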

\begin{proof}
Note that $V^0_n/S_{n-1}$ is lagrangian in $V^0_n/S_n\times V^0_n/S_{n-1}$.
Recall that $\rho_{n,n-1}$ is an isomorphism over $V^0_n/S_{n-1}$. Since $H_{n,n-1}$
is irreducible, we see that it is lagrangian in $H_n\times H_{n-1,\diamondsuit}$.
%It is enough to check that $H_{n,n-1}$ is lagrangian over $V_n^0/S_n$ because $H_{n,n-1}$
%is irreducible. But $$H_{n,n-1}\times_{V_n/S_n}V_n^0/S_n\cong V_n^0/S_{n-1}$$ is lagrangian inside
%$V_n^0/S_n\times V_n^0/S_{n-1}$.
\end{proof}

Thanks to Example \ref{Ex:nested2}, we have  the following analog of Lemma \ref{Lem:Hilb_local}.

\begin{Lem}\label{Lem:nested_Hilb_local}
Let $b\in V_n/S_n$ be as in Lemma \ref{Lem:Hilb_local}. Then
$\alpha_n^{-1}(\rho_n^{-1}\left( (V_n/S_n)^{\wedge_b}\right))$ is the disjoint union
of the following schemes:
\begin{itemize}
\item[(i)] $n-2$ copies of the preimage of $(V_2/S_2\times V_{n-2})^{\wedge_0}$  in $\operatorname{Bl}_{\Delta}(V_2/S_2)\times
V_{n-2}$, each mapping to $\rho^{-1}\left( (V_n/S_n)^{\wedge_p}\right)$ isomorphically,
\item[(ii)] and one copy of the preimage of $V_n^{\wedge_0}$ in $\operatorname{Bl}_{\Delta}(V_2)\times
V_{n-2}$, which maps to $\rho^{-1}\left( (V_n/S_n)^{\wedge_p}\right)$ via the quotient map
for the $S_2$-action.
\end{itemize}
\end{Lem}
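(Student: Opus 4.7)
The plan is to deduce this directly from the étale-local factorization of Hilbert schemes together with Example \ref{Ex:nested2} (which identifies $H_{2,1}$ with $\operatorname{Bl}_\Delta(V_2)$). Lemma \ref{Lem:Hilb_local} already encodes the factorization for $H_n$: for $b=(p_1,p_1,p_3,\ldots,p_n)$ with exactly one pair of coincident points and $n-2$ distinct simple ones, the formal neighborhood of the fiber over $b$ in $H_n$ is the product of the local model for $H_2$ at $(p_1,p_1)$, namely $\operatorname{Bl}_\Delta(V_2/S_2)$, with the trivial $V_1=\C^2$ factor at each simple point $p_i$. A point of $H_{n,n-1}$ lying over this neighborhood is an enhancement of such a $J$ by the choice of $J'\subset J$ of colength one; equivalently, it is the additional datum of a length-$1$ quotient $J'/J$ whose support is a single closed point contained in the closure of one, and only one, of the clusters of $b$.

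This combinatorial choice stratifies $\alpha_n^{-1}(\rho_n^{-1}((V_n/S_n)^{\wedge_b}))$ into clopen pieces indexed by which cluster carries $\operatorname{Supp}(J'/J)$, and each stratum is already closed (the cluster supports are separated and remain so in the formal neighborhood), so the stratification is in fact a disjoint union. If $\operatorname{Supp}(J'/J)$ lies in a simple cluster $\{p_i\}$, $i\geq 3$, then $J'$ retains the full length-$2$ structure at $(p_1,p_1)$ and a length-$0$ structure at $p_i$: the factor at the double cluster is governed by $H_2$, each other simple factor is governed by $V_1$, and the datum of $J'$ relative to $J$ carries no extra parameters beyond what is visible in $J$. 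Hence this stratum is isomorphic to $\rho_n^{-1}((V_n/S_n)^{\wedge_b})$ itself, giving the $n-2$ copies in (i). If instead $\operatorname{Supp}(J'/J)$ lies in the double cluster, the factor at $(p_1,p_1)$ is governed by the local model for $H_{2,1}$, which by Example \ref{Ex:nested2} is $\operatorname{Bl}_\Delta(V_2)$, while the simple clusters continue to contribute factors $V_1$. This yields the preimage of $V_n^{\wedge_0}$ in $\operatorname{Bl}_\Delta(V_2)\times V_{n-2}$, giving (ii).

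It remains to identify the maps to $\rho_n^{-1}((V_n/S_n)^{\wedge_b})$ induced by $\alpha_n$. For each of the $n-2$ strata in (i), $\alpha_n$ just forgets the (already determined) point $p_i$, so the map is an isomorphism onto $\rho_n^{-1}((V_n/S_n)^{\wedge_b})$. For the stratum in (ii), $\alpha_n$ is the product of the identity on the $V_{n-2}$-factor with the natural map $\operatorname{Bl}_\Delta(V_2)\to H_2=\operatorname{Bl}_\Delta(V_2/S_2)$ induced from $V_2\to V_2/S_2$ by universality of blow-up; this latter map is precisely the quotient for the $S_2$-action permuting the two $\C^2$-summands of $V_2$ (the two possible choices of which copy of $p_1$ is removed by $J'$), which is the description claimed in (ii).

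The main obstacle is the justification of the étale-local factorization of $H_{n,n-1}$ into independent contributions from each cluster of $b$. This is the nested analogue of Lemma \ref{Lem:Hilb_local} and follows in the same way from the standard factorization of punctual Hilbert schemes along disjoint supports (as in \cite{Cheah}), together with the observation that the length-one quotient $J'/J$ lives in a single cluster, thereby separating the contributions and producing the disjoint union decomposition. Once this factorization is available, the identifications in (i) and (ii) reduce to the $n=2$ computation already carried out in Examples \ref{Ex_Hilb2} and \ref{Ex:nested2}.
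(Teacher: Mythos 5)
Your argument is correct and fills in, in the spirit the author clearly intends, the details that the paper leaves unstated: the paper offers no written proof, only the remark that the lemma follows from Example \ref{Ex:nested2} as an ``analog of Lemma \ref{Lem:Hilb_local}.'' Your route — factor the formal neighborhood of the fiber in $H_{n,n-1}$ cluster by cluster, observe that the length-one quotient $J'/J$ must live in a single cluster so the decomposition by that choice is clopen, note that a simple cluster carrying $\operatorname{Supp}(J'/J)$ contributes no extra parameters (the $H_{1,0}\to H_1$ forgetful map is an isomorphism), and handle the double cluster via Example \ref{Ex:nested2} which identifies the local $H_{2,1}$ model with $\operatorname{Bl}_\Delta(V_2)$ and $\alpha_2$ with the $S_2$-quotient — is exactly the intended deduction, so this is essentially the paper's approach made explicit rather than a genuinely different proof.
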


Now we compute $\operatorname{Pic}(H_{n,n-1})$ and $H^2(H_{n,n-1},\C)$.

\begin{Lem}\label{Lem:Pic_nested}
Let $n>2$. Then the  group $\operatorname{Pic}(H_{n,n-1})$ is a free abelian group with basis $\alpha_n^*\Str_{n}(1)$
and $\beta_n^* \Str_{n-1,\diamondsuit}(1)$. The space $H^2(H_{n,n-1},\C)$ is two-dimensional
with basis $$c_1(\beta_n^* \Str_{n-1,\diamondsuit}(1)), c_1(\alpha_n^*\Str_{n}(1)).$$
\end{Lem}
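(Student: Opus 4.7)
The plan is to exploit the birational geometry of $\beta_n: H_{n,n-1}\to H_{n-1,\diamondsuit}$. The fiber over $(J',p)\in H_{n-1,\diamondsuit}$ consists of the set of $J\subset J'$ with $J'/J\cong\C_p$; when $p\notin V(J')$ this fiber is a single point, whereas for $(J',p)$ in the universal subscheme $U_{n-1}\subset H_{n-1,\diamondsuit}$ (of codimension $2$) it is the projective space $\mathbb{P}((J'/\mathfrak{m}_p J')^*)$, generically $\mathbb{P}^1$. Setting $E:=\beta_n^{-1}(U_{n-1})$, a dimension count over the stratification of $U_{n-1}$ by the minimal number of generators of the local ideal rules out extra divisorial components, so $E$ is an irreducible divisor, and $\beta_n$ restricts to an isomorphism $H_{n,n-1}\setminus E\xrightarrow{\sim} H_{n-1,\diamondsuit}\setminus U_{n-1}$.

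Next I would compute $\operatorname{Pic}(H_{n,n-1})$ by excision. Since $U_{n-1}$ has codimension $2$ in the smooth variety $H_{n-1,\diamondsuit}$, the restriction gives $\operatorname{Pic}(H_{n-1,\diamondsuit}\setminus U_{n-1})=\operatorname{Pic}(H_{n-1,\diamondsuit})=\Z\cdot\Str_{n-1,\diamondsuit}(1)$ (valid since $n-1\geq 2$). The divisor excision sequence $0\to\Z[E]\to\operatorname{Pic}(H_{n,n-1})\to\operatorname{Pic}(H_{n,n-1}\setminus E)\to 0$ is split by $\beta_n^*$, giving
\[
\operatorname{Pic}(H_{n,n-1}) = \Z\cdot\beta_n^*\Str_{n-1,\diamondsuit}(1)\oplus\Z\cdot[E].
\]

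To replace $[E]$ with $\alpha_n^*\Str_n(1)$ in the basis, I would intersect both sides with a generic $\mathbb{P}^1$-fiber $F$ of $\beta_n$. The image $\alpha_n(F)\subset H_n$ parameterizes length-two subschemes supported at a fixed point $p\in\C^2$, glued to $n-2$ fixed distinct auxiliary points. Using $\Str_n(1)=\det\mathcal{T}_n$, the restriction $\mathcal{T}_n|_{\alpha_n(F)}$ decomposes as $\Str^{n-1}\oplus\Str(1)$, the last summand being the tautological quotient bundle on $\mathbb{P}(T_p^*\C^2)$. Hence $\alpha_n^*\Str_n(1)\cdot F=1$, while $[E]\cdot F=-1$ and $\beta_n^*\Str_{n-1,\diamondsuit}(1)\cdot F=0$; the resulting change-of-basis matrix has determinant $-1$, so $\{\alpha_n^*\Str_n(1),\beta_n^*\Str_{n-1,\diamondsuit}(1)\}$ is also a $\Z$-basis of $\operatorname{Pic}(H_{n,n-1})$.

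For the $H^2$ claim, I would either invoke the blowup cohomology formula (after checking that $\beta_n$ is the blowup of $U_{n-1}$, which the local fiber analysis strongly suggests), or mirror the treatment for $H_n$ in Subsection \ref{SS_Hilbert_general}: apply the Bialynicki-Birula decomposition for the one-parameter subgroup $\{(t,t^N)\}\subset T$ with $N\gg 0$, and verify that exactly two codimension-one attracting cells appear among the cells indexed by pairs of nested Young diagrams $(\lambda,\mu)$ with $|\lambda|=n$, $|\mu|=n-1$, $\mu\subset\lambda$. The hard part will be the irreducibility of $E$ and the Chern-class computation with $\mathcal{T}_n|_{\alpha_n(F)}$.
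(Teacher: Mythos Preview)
Your approach is correct and genuinely different from the paper's. The paper does not use the birational structure of $\beta_n$ at all; instead it introduces two divisors $D_1$ (where $J'$ has repeated support) and $D_2$ (your $E$, where $\operatorname{Supp}(J'/J)\subset\operatorname{Supp}(\C[x,y]/J')$), and then runs the Bialynicki--Birula decomposition for the one-parameter subgroup $t\mapsto(t,t^N)$ directly on $H_{n,n-1}$. Using Cheah's computation of the tangent weights at the $T$-fixed points, it finds one open cell and exactly two codimension-$1$ cells, whose closures are $D_1$ and $D_2$; this gives $\operatorname{Cl}(H_{n,n-1})=\Z D_1\oplus\Z D_2$ and simultaneously $H^2(H_{n,n-1},\C)\cong\C^2$. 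The identification with the pullback bundles is then made by observing that $\alpha_n$ and $\beta_n$ are smooth at the generic points of the relevant divisors, so $\beta_n^*\Str_{n-1,\diamondsuit}(1)=\mathcal{O}(D_1)$ and $\alpha_n^*\Str_n(1)=\mathcal{O}(D_1+D_2)$, rather than via an intersection-number calculation.

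Your excision argument through $\beta_n$ is more elementary for $\operatorname{Pic}$ and avoids quoting Cheah's tangent-weight tables, at the cost of needing the irreducibility of $E$ and a separate treatment of $H^2$ (for which you end up suggesting the same Bialynicki--Birula count the paper uses). One small remark: you do not actually need the precise value $[E]\cdot F=-1$, nor the blowup description of $\beta_n$. Once you know $\beta_n^*\Str_{n-1,\diamondsuit}(1)\cdot F=0$ and $\alpha_n^*\Str_n(1)\cdot F=1$, writing $\alpha_n^*\Str_n(1)=a\,\beta_n^*\Str_{n-1,\diamondsuit}(1)+b\,[E]$ in your basis forces $b\cdot([E]\cdot F)=1$, hence $b=\pm1$, which is all you need for the change of basis to be unimodular. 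This lets you drop the blowup hypothesis entirely from the $\operatorname{Pic}$ computation.
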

\begin{proof}
Consider the divisors $D_1,D_2$ in $H_{n,n-1}$ defined as follows: $D_1$ is the locus
of $(J'\subset J)$ such that the support of $\C[x,y]/J'$ has repeated points, while $D_2$
is the locus, where the support of $J'/J$ is contained in that of $\C[x,y]/J'$.  Note that $D_1\cap D_2$
has codimension  $2$, while $D_1\cup D_2=H_{n,n-1}\setminus \rho_{n,n-1}^{-1}(V^0_n/S_{n-1})$,
the locus of $(J'\subset J)$ such that the support of $J$ has repeated points.
Both $D_1,D_2$ are irreducible.

Note that $\beta_n$ is smooth at a generic point of $D_1$,
while from Lemma \ref{Lem:nested_Hilb_local} it follows that $\alpha_n$ is smooth at the generic points of  $D_1, D_2$.
Therefore $\beta_n^* \Str_{H_{n-1,\diamondsuit}}(1)=\mathcal{O}(D_1)$
and $\alpha_n^* \Str_{H_n}(1)=\mathcal{O}(D_1+D_2)$.
So what remains to prove is that $\operatorname{Cl}(H_{n,n-1})$ is
generated by $D_1,D_2$. Then it is freely generated.

Consider the natural action of $T$ on $H_{n,n-1}$. It contains a contracting torus and has
finitely many fixed points (because the actions of $T$ on $H_{n-1}$ and $H_n$
have these properties and we have $H_{n,n-1}\hookrightarrow H_n\times H_{n-1}$).
The points in $H_{n,n-1}^T$ are labelled by Young diagrams with $n$ boxes and fixed
corner box (so that we get a diagram with $n-1$ boxes by removing the fixed box).
The $T$-action on the tangent spaces was computed in \cite[Section 2]{Cheah}. In particular,
take the one-parameter subgroup of $T$ of the form $t\mapsto (t,t^N)$ for
$N\gg 0$. From \cite[Proposition 2.6.4]{Cheah} it follows that
the open cell for the Bialynicki-Birula decomposition corresponds to the unique fixed
point with diagram $(n)$ and there are two codimension $1$ cells corresponding
to the two possible corner boxes of $(n-1,1)$. Similarly to the case of usual
Hilbert schemes, the closures of these cells are
$D_1$ and $D_2$. So these divisors freely generate $\operatorname{Cl}(H_{n,n-1})$
and the proof for the Picard group is finished. The proof for the second cohomology
space follows.
\end{proof}

%Let us record an important corollary of the proof. Recall that we write $V_n^{1}$ for the
%open locus in $V_n$ of $n$-tuples of points with at most one repeated point.
%
%\begin{Cor}\label{Cor:codim}
%We have $\operatorname{codim}_{H_{n,n-1}}H_{n,n-1}\setminus (\rho_n\circ \alpha_n)^{-1}(V_n^1/S_n)\geqslant 2$.
%\end{Cor}

Following Haiman, we will write
$$\Str_{n,n-1}(k,l):=\beta_n^*(\Str_{n-1,\diamondsuit}(k))\otimes \alpha_n^* \Str_{n}(\ell).$$
The following result is due to Haiman, \cite[Proposition 3.6.4]{Haiman_n!}.

\begin{Lem}\label{Lem:nested_canonical}
For $n>2$, the canonical bundle $K_{H_{n,n-1}}$ is $\Str_{n,n-1}(1,-1)$.
\end{Lem}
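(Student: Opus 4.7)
The plan is to use the Picard group computation from Lemma~\ref{Lem:Pic_nested} to write $K_{H_{n,n-1}}=\mathcal{O}(aD_1+bD_2)$ for unique integers $a,b$, and then to compute $a$ and $b$ from the local structure around generic points of $D_1$ and $D_2$ given by Lemma~\ref{Lem:nested_Hilb_local}. By Lemma~\ref{Lem:Pic_nested}, for $n>2$ the group $\operatorname{Pic}(H_{n,n-1})$ is freely generated by $[D_1]$ and $[D_2]$, and on the open complement $\rho_{n,n-1}^{-1}(V_n^0/S_{n-1})\cong V_n^0/S_{n-1}$ the free $S_{n-1}$-action on $V_n^0\subset V_n$ admits an invariant volume form that descends to trivialise the canonical bundle. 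Hence $K_{H_{n,n-1}}$ is supported on $D_1\cup D_2$.

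To determine $a$ I work in a formal neighborhood of a generic point of $D_1$, which by Lemma~\ref{Lem:nested_Hilb_local}(i) is isomorphic to an open in $H_2\times V_{n-2}$. Both factors have trivial canonical bundle ($H_2$ is a crepant symplectic resolution of $V_2/S_2$, and $V_{n-2}$ is affine), so $K_{H_{n,n-1}}$ is trivial on this open. Since $D_2$ is empty there while $D_1$ restricts to the nontrivial class of the exceptional $(-2)$-curve of $H_2$ times $V_{n-2}$, we conclude $a=0$. For $b$, a generic point of $D_2$ has formal neighborhood $\operatorname{Bl}_\Delta(V_2)\times V_{n-2}$ by Lemma~\ref{Lem:nested_Hilb_local}(ii); the standard blow-up formula for the smooth codimension-two centre $\Delta\subset V_2=\C^4$ gives $K_{\operatorname{Bl}_\Delta V_2}\cong\mathcal{O}(\widetilde E)$ with $\widetilde E$ the exceptional divisor, and $\widetilde E\times V_{n-2}$ is identified with the restriction of $D_2$, yielding $|b|=1$. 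Translating $\mathcal{O}(\pm D_2)$ into $\Str_{n,n-1}(k,\ell)$-notation via the identifications $\mathcal{O}(D_1)=\beta_n^*\Str_{n-1,\diamondsuit}(1)$ and $\mathcal{O}(D_1+D_2)=\alpha_n^*\Str_n(1)$ from the proof of Lemma~\ref{Lem:Pic_nested} then gives $K_{H_{n,n-1}}=\Str_{n,n-1}(1,-1)$ once the sign is correctly fixed.

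The main obstacle is precisely the sign of $b$: one must carefully reconcile the natural orientation of the exceptional divisor $\widetilde E\subset\operatorname{Bl}_\Delta V_2$ appearing in the blow-up formula with the fixed effective representative of $D_2$ used in Lemma~\ref{Lem:Pic_nested}, and make this compatible with the GIT polarization normalising $\Str_n(1)$. A clean way to settle the sign is to compute the restriction of $K_{H_{n,n-1}}$ to a proper test curve -- for instance a $\mathbb{P}^1$-fibre of $\widetilde E\to\Delta$ lifted into the type-(ii) formal model, or the $(-2)$-curve $E_0$ inside the $H_2$-factor of a type-(i) neighborhood -- and to match the resulting intersection number with the one predicted by the candidate formula $\Str_{n,n-1}(1,-1)$.
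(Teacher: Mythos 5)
The paper does not prove this statement itself; it cites Haiman's \cite[Proposition 3.6.4]{Haiman_n!}, whose argument is in fact close in spirit to yours. Your computation of the coefficients in $K_{H_{n,n-1}}=\mathcal{O}(aD_1+bD_2)$ is correct: on the type-(i) formal local model $H_2\times V_{n-2}$ the canonical bundle is trivial because $\rho_2$ is crepant, while $D_1$ restricts to a nontrivial class and $D_2$ is empty, forcing $a=0$; on the type-(ii) model, the blow-up formula gives $K_{\operatorname{Bl}_\Delta V_2}=\mathcal{O}(\widetilde{E})$ with $\widetilde{E}$ the exceptional divisor, and $\widetilde{E}$ is identified with the restriction of $D_2$, forcing $b=1$. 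In particular there is no sign ambiguity to settle: the ``main obstacle'' you describe is not a real one, since $\widetilde{E}$ and the local model of $D_2$ are one and the same effective divisor and the blow-up formula has no orientation freedom. Your local analysis establishes $K_{H_{n,n-1}}=\mathcal{O}(D_2)$.

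The genuine gap is the final translation. Feeding $K=\mathcal{O}(D_2)$ into the identities you quote from the proof of Lemma~\ref{Lem:Pic_nested} gives $K=\Str_{n,n-1}(-1,1)$, which is the \emph{inverse} of the claimed $\Str_{n,n-1}(1,-1)$. This cannot be repaired by taking $b=-1$, since the blow-up computation fixes $b=+1$. Rather, the quoted identities are themselves off. The class of the boundary divisor on $H_m$ is $\Str_m(-2)$, not $\Str_m(1)$: on $H_2$ the exceptional fiber is a $(-2)$-curve, while $\Str_2(1)=\Lambda^2\mathcal{T}_2$ restricts to $\mathcal{O}_{\mathbb{P}^1}(1)$ on it, because the fiber of $\mathcal{T}_2$ at the ideal $(\lambda_1x+\lambda_2y)+(x,y)^2$ is $\C\oplus\bigl((\C x\oplus\C y)/\C(\lambda_1x+\lambda_2y)\bigr)$, whose top exterior power has degree $+1$. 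Moreover $\alpha_n$ is not smooth along $D_2$ but ramified with index $2$ there (its type-(ii) local model is the $S_2$-quotient $\operatorname{Bl}_\Delta V_2\to\operatorname{Bl}_\Delta(V_2/S_2)$, branched along $\widetilde{E}$), so the pullback under $\alpha_n$ of the boundary divisor is $D_1+2D_2$, not $D_1+D_2$. The correct dictionary is therefore $\mathcal{O}(D_1)=\Str_{n,n-1}(-2,0)$ and $\mathcal{O}(D_1+2D_2)=\Str_{n,n-1}(0,-2)$, which give $\mathcal{O}(D_2)=\Str_{n,n-1}(1,-1)$; combined with your $b=+1$ this recovers the lemma. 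To close the gap you should replace the identities you borrowed with this corrected dictionary, establishing $\mathcal{O}(D_2)=\Str_{n,n-1}(1,-1)$ directly; taken literally, the argument as written produces the inverse line bundle.
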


To finish the section, let us note that Proposition \ref{Prop:two_step_smooth},
Example \ref{Ex:nested2}, Lemmas \ref{Lem:nested_global}, \ref{Lem:lagrangian}, \ref{Lem:nested_Hilb_local},
\ref{Lem:nested_canonical}, as well as the computation of the Picard group
in Lemma \ref{Lem:Pic_nested}  still hold if we replace $\C$ with a sufficiently large
finite localization $\Ring$ of $\Z$. Hence they also hold over $\F:=\overline{\F}_p$
for $p\gg 0$.

\subsection{Universal family $U_n$}
The variety $H_n$ parameterizes the codimension $n$ ideals in $\C[x,y]$.
In particular, it comes with the sheaf of algebras whose fiber at $I\in H_n$ is $\C[x,y]/I$.
We denote this sheaf of algebras by $\mathcal{T}_n$, this is a rank $n$ vector bundle.

Let $U_n$ denote the relative spectrum $\operatorname{Spec}_{\Str_n}(\mathcal{T}_n)$.
By the construction, $U_n$ is a closed subscheme of $H_n\times \C^2$.
It admits a finite degree $n$ morphism to $H_n$, to be denoted by $\tau_n$ so that
$\mathcal{T}_n:=\tau_{n*}\Str_{U_n}$.
We have
$\mathcal{T}_n:=\pi_*(\Str_{\mu^{-1}(0)^{\theta-s}}\otimes \C^n)^G$, so, in particular,
$\Str_{n}(1)=\Lambda^n \mathcal{T}_n$.

\begin{Ex}\label{Ex:universal2}
For $n=2$, we have $U_2=\operatorname{Bl}_\Delta(V_2)$ and $\tau_2$ is the quotient morphism
for the $S_2$-action. In particular, $U_2=H_{2,1}$.
\end{Ex}

We see that, in general, $U_n$ is a
Cohen-Macaulay scheme. Note also that $\tau_n^{-1}(V_n^0/S_n)=V_n^0/S_{n-1}$.
In particular, $U_n$ is generically reduced and since it is Cohen-Macaulay, we see that $U_n$ is reduced.

Let us write $V_n^1$ for the locus of points in $V_n$ with no more than one pair of repeated
points.

\begin{Lem}\label{Lem:alpha_decomp}
The following claims are true:
\begin{enumerate}
\item We have $\alpha_n=\tau_n\circ \bar{\alpha}_n$ for a projective birational morphism
 $\bar{\alpha}_n: H_{n,n-1}\rightarrow U_n$.
\item $\bar{\alpha}_n$ is an isomorphism over $V_n^1/S_n$.
\item $U_n$ is normal and $\alpha_n=\tau_n\circ \bar{\alpha}_n$
is the Stein factorization of $\alpha_n$.
\end{enumerate}
\end{Lem}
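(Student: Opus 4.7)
The plan is to construct $\bar{\alpha}_n$ tautologically, verify it is an isomorphism over $V_n^1/S_n$ by comparing the formal-local models provided by Lemma \ref{Lem:nested_Hilb_local} and Example \ref{Ex:universal2}, and then use this local picture to deduce normality of $U_n$ and the identification of the Stein factorization of $\alpha_n$. For part (1), a point $(J\subset J')\in H_{n,n-1}$ determines a quotient $\C[x,y]/J\twoheadrightarrow \C[x,y]/J'$ with one-dimensional kernel supported at a single point $p\in\C^2$, and I set $\bar{\alpha}_n(J\subset J'):=(J,p)$. Since $p$ lies in the scheme-theoretic support of $\C[x,y]/J$, this lands in $U_n\subset H_n\times\C^2$, and by construction $\tau_n\circ\bar{\alpha}_n=\alpha_n$. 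The morphism $\alpha_n$ is projective, with fibers the projective spaces of hyperplanes in $\C[x,y]/J$; since $\tau_n$ is finite, $\bar{\alpha}_n$ is projective too. Birationality is immediate from the fact that over $V_n^0/S_n$ both source and target of $\bar{\alpha}_n$ are canonically identified with $V_n^0/S_{n-1}$, with $\bar{\alpha}_n$ the identity.

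For part (2), I appeal to the formal-local structure of $\alpha_n$ given by Lemma \ref{Lem:nested_Hilb_local}: over the formal neighborhood of a point $b\in V_n^1/S_n$ with a single pair of coincident coordinates, $\alpha_n^{-1}(\rho_n^{-1}((V_n/S_n)^{\wedge_b}))$ splits as the disjoint union of $(n-2)$ components of type (i) together with one component of type (ii). On the other hand, the universal family $U_n$ over the same base decomposes analogously: its fiber over $J\in \rho_n^{-1}((V_n/S_n)^{\wedge_b})$ is the length-$n$ subscheme of $\C^2$ cut out by $J$, and formally-locally this breaks into the $(n-1)$ connected components centered at the $(n-1)$ distinct points of the support of $b$. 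Each of the $(n-2)$ singleton points contributes a component mapping isomorphically down to $\rho_n^{-1}((V_n/S_n)^{\wedge_b})\simeq\operatorname{Bl}_\Delta(V_2/S_2)^{\wedge_0}\times V_{n-2}^{\wedge_0}$ under $\tau_n$, while the double point contributes a component identified with $U_2\times V_{n-2}^{\wedge_0}\simeq\operatorname{Bl}_\Delta(V_2)\times V_{n-2}^{\wedge_0}$ by Example \ref{Ex:universal2}. The morphism $\bar{\alpha}_n$ matches these two decompositions component-by-component and restricts to an isomorphism on each factor.

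For part (3), since $U_n$ is already known to be Cohen-Macaulay it satisfies Serre's condition $S_2$, so normality reduces to regularity in codimension one. By part (2), $U_n$ is smooth over $V_n^1/S_n$, and the complement $\rho_n^{-1}(V_n/S_n\setminus V_n^1/S_n)$ has codimension $\geq 2$ in $H_n$ (by direct inspection of the strata of $V_n/S_n$ with multiple coincident pairs or triple points, combined with the resolution $\rho_n$); finiteness of $\tau_n$ then places the singular locus of $U_n$ in codimension $\geq 2$, proving normality. Finally $\bar{\alpha}_n$ is projective and birational from the irreducible smooth scheme $H_{n,n-1}$ to the normal integral scheme $U_n$, so Zariski's main theorem yields $\bar{\alpha}_{n*}\Str_{H_{n,n-1}}=\Str_{U_n}$, exhibiting $\alpha_n=\tau_n\circ\bar{\alpha}_n$ as the Stein factorization. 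The main technical obstacle is the component matching in part (2): one must keep careful track of how the $(n-2)$ components of $H_{n,n-1}$ indexed by choices of a singleton support point to be removed, together with the single $\operatorname{Bl}_\Delta(V_2)$-component indexed by removing a point inside the double point, line up with the analogous decomposition of $U_n$ via $\tau_n$, so that $\bar{\alpha}_n$ is an isomorphism on each matching pair rather than merely a finite cover.
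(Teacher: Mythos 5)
Your proof follows essentially the same route as the paper's: construct $\bar{\alpha}_n$ by factoring the natural map $H_{n,n-1}\to H_n\times\C^2$ through $U_n$, verify it is an isomorphism over $V_n^1/S_n$ by comparing the formal-local descriptions from Lemma~\ref{Lem:nested_Hilb_local} and Example~\ref{Ex:universal2}, and then deduce normality of $U_n$ from Cohen-Macaulayness plus smoothness in codimension one, with the Stein factorization following from normality and birationality. The only difference is that you spell out a few codimension estimates and the component matching that the paper leaves implicit; the underlying argument is the same.
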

\begin{proof}
To prove (1) we note that $\alpha_n$ naturally factors as $H_{n,n-1}\rightarrow H_n\times
\C^2\rightarrow H_n$, where the first morphism is projective.
That morphism factors as $H_{n,n-1}\rightarrow U_n\hookrightarrow H_n\times \C^2$.
We take this arrow for $\bar{\alpha}_n$.
Over $V^0_n/S_n$, the morphism $\bar{\alpha}_n$ is the isomorphism $V^0_n/S_{n-1}\xrightarrow{\sim}
V^0_n/S_{n-1}$. This proves (1).

Let us prove (2). By Example \ref{Ex:universal2}, $\bar{\alpha}_2$ is an isomorphism.
In the notation of Lemma \ref{Lem:Hilb_local},
$\tau_n^{-1}(\rho_n^{-1}(V_n/S_n)^{\wedge_b})$ admits the same description as
$\alpha_n^{-1}(\rho_n^{-1}(V_n/S_n)^{\wedge_b})$, whose description was given Lemma \ref{Lem:nested_Hilb_local}. This proves (2).

To prove (3), we notice that, by (2), $U_n$ is smooth outside of codimension $2$.
Since $U_n$ is Cohen-Macaulay, it is normal. Hence $\alpha_n=\tau_n\circ \bar{\alpha}_n$
is the Stein factorization.
\end{proof}

\begin{Lem}\label{Lem:universal_global}
We have $\C[U_n]=\C[V_{n}]^{S_{n-1}}$, which gives rise to the morphism $\bar{\rho}_n:U_n
\rightarrow V_n/S_{n-1}$.  The morphism $\rho_n\circ\tau_n:U_n
\rightarrow V_n/S_n$ factors as $U_n\xrightarrow{\bar{\rho}_n} V_n/S_{n-1}\rightarrow V_n/S_n$.
\end{Lem}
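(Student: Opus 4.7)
The plan is to pull everything back to facts already established about $H_{n,n-1}$, which we know has $\C[H_{n,n-1}] = \C[V_n]^{S_{n-1}}$ by Lemma~\ref{Lem:nested_global}, and then transfer this identification to $U_n$ via the Stein factorization given in Lemma~\ref{Lem:alpha_decomp}.

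First, I would use part (3) of Lemma~\ref{Lem:alpha_decomp}: since $\alpha_n = \tau_n \circ \bar{\alpha}_n$ is the Stein factorization of $\alpha_n$, the projective birational morphism $\bar{\alpha}_n : H_{n,n-1} \to U_n$ satisfies $\bar{\alpha}_{n*}\Str_{H_{n,n-1}} = \Str_{U_n}$ (this is exactly what it means for the second factor to be the finite part of the Stein factorization, together with the normality of $U_n$). Taking global sections over $H_n$ (or alternatively, taking $\Gamma$ directly, since everything in sight is affine over $V_n/S_n$), I get
\[
\C[U_n] \;=\; \Gamma(U_n,\Str_{U_n}) \;=\; \Gamma(H_{n,n-1}, \Str_{H_{n,n-1}}) \;=\; \C[H_{n,n-1}],
\]
and Lemma~\ref{Lem:nested_global} identifies the right-hand side with $\C[V_n]^{S_{n-1}}$. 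This identification produces the desired morphism $\bar{\rho}_n : U_n \to \Spec\C[V_n]^{S_{n-1}} = V_n/S_{n-1}$.

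For the factorization statement, I would compose everything with $\bar{\alpha}_n$ and use that $\bar{\alpha}_n$ is surjective (being projective and birational onto the irreducible $U_n$). On $H_{n,n-1}$ we have $\rho_n \circ \tau_n \circ \bar{\alpha}_n = \rho_n \circ \alpha_n$, which by the second part of Lemma~\ref{Lem:nested_global} coincides with the composition $H_{n,n-1} \xrightarrow{\rho_{n,n-1}} V_n/S_{n-1} \to V_n/S_n$. By construction of $\bar{\rho}_n$ from the ring identification above, $\bar{\rho}_n \circ \bar{\alpha}_n = \rho_{n,n-1}$, so $\rho_n \circ \tau_n \circ \bar{\alpha}_n$ equals $(V_n/S_{n-1} \to V_n/S_n) \circ \bar{\rho}_n \circ \bar{\alpha}_n$. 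Surjectivity of $\bar{\alpha}_n$ then lets me cancel it from the right and conclude the factorization.

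There is no real obstacle here: the only subtle point is verifying that $\bar{\alpha}_{n*}\Str_{H_{n,n-1}} = \Str_{U_n}$, which is immediate from Lemma~\ref{Lem:alpha_decomp}(3) (normality of $U_n$ plus projectivity and birationality of $\bar{\alpha}_n$). Everything else is formal manipulation of morphisms to affine targets.
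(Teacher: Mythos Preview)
Your proof is correct and follows exactly the paper's approach: use part~(3) of Lemma~\ref{Lem:alpha_decomp} to conclude $\C[U_n]=\C[H_{n,n-1}]$, then invoke Lemma~\ref{Lem:nested_global} for both the identification with $\C[V_n]^{S_{n-1}}$ and the factorization. The paper's proof is simply a two-line version of what you wrote.
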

\begin{proof}
By (3) of Lemma \ref{Lem:alpha_decomp}, $\C[U_n]=\C[H_{n,n-1}]$. Now the claims of this lemma
follow from Lemma \ref{Lem:nested_global}.
\end{proof}

Now we are going to describe the restriction of $\mathcal{T}_n$ to $\rho_n^{-1}((V_n/S_n)^{\wedge_b})$, where
 we use the notation of Lemma \ref{Lem:Hilb_local}.

\begin{Lem}\label{Lem:taut_restriction}
We have
$$\mathcal{T}_n|_{\rho_n^{-1}((V_n/S_n)^{\wedge_b})}\cong \Str_{\rho_n^{-1}((V_n/S_n)^{\wedge_b})}^{\oplus n-1}
\oplus \Str_{\rho_n^{-1}((V_n/S_n)^{\wedge_b})}(1).$$
\end{Lem}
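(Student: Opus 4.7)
The plan is to reduce to a local calculation on $H_n$ using the description of $\tau_n^{-1}$ over $\rho_n^{-1}((V_n/S_n)^{\wedge_b})$, then compute the pushforward by decomposing into connected components.

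First, I would use Lemma \ref{Lem:alpha_decomp}(2) together with Lemma \ref{Lem:nested_Hilb_local}: since the point $b$ has exactly one pair of coinciding coordinates, it lies in $V_n^1/S_n$, so $\bar\alpha_n$ is an isomorphism over $\rho_n^{-1}((V_n/S_n)^{\wedge_b})$. This means the preimage $\tau_n^{-1}(\rho_n^{-1}((V_n/S_n)^{\wedge_b}))\subset U_n$ admits the same description as the preimage under $\alpha_n$ given in Lemma \ref{Lem:nested_Hilb_local}: it is a disjoint union of $n-2$ connected components mapping isomorphically onto $\rho_n^{-1}((V_n/S_n)^{\wedge_b})$, together with one additional component which, via the identification of Lemma \ref{Lem:Hilb_local}, is the formal neighborhood of the zero fiber in $U_2\times V_{n-2}=\operatorname{Bl}_\Delta(V_2)\times V_{n-2}$ mapping to $\operatorname{Bl}_\Delta(V_2/S_2)\times V_{n-2}$ via $\tau_2\times \operatorname{id}$.

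Since $\tau_n$ is finite flat, pushforward commutes with base change to the formal neighborhood, and the disjoint union decomposition translates into a direct sum:
\begin{equation*}
\mathcal{T}_n|_{\rho_n^{-1}((V_n/S_n)^{\wedge_b})}\cong \Str_{\rho_n^{-1}((V_n/S_n)^{\wedge_b})}^{\oplus n-2}\oplus \bigl(\mathcal{T}_2\boxtimes \Str_{V_{n-2}}\bigr)^{\wedge},
\end{equation*}
where the hat denotes restriction to the appropriate formal neighborhood. So the problem reduces to identifying $\mathcal{T}_2$ on $H_2$.

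For $\mathcal{T}_2$: by Example \ref{Ex:universal2}, $U_2=\operatorname{Bl}_\Delta(V_2)$ and $\tau_2:U_2\to H_2$ is the $S_2$-quotient morphism, so $\tau_{2*}\Str_{U_2}$ decomposes into $S_2$-isotypic components $\mathcal{T}_2=\mathcal{T}_2^{+}\oplus \mathcal{T}_2^{-}$. The trivial isotypic piece is $\Str_{H_2}$, while the sign piece is a line bundle $L$; taking determinants gives $L=\det\mathcal{T}_2=\Str_2(1)$ by the definition $\Str_n(1)=\Lambda^n\mathcal{T}_n$. Hence $\mathcal{T}_2\cong \Str_{H_2}\oplus \Str_{H_2}(1)$. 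Substituting back produces the claimed $\Str^{\oplus n-1}\oplus \Str(1)$ decomposition; the identification of the last summand with $\Str_n(1)$ restricted to the formal neighborhood is automatic because taking the top exterior power of both sides recovers $\Str_n(1)=\Lambda^n\mathcal{T}_n$.

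The only step with any subtlety is confirming that the line bundle appearing as a summand really is $\Str_n(1)|_{\rho_n^{-1}((V_n/S_n)^{\wedge_b})}$ and not some twist differing by a pullback from $V_{n-2}$; but since $V_{n-2}^{\wedge_0}$ is a formal polydisk, its Picard group is trivial, so the determinant computation pins down the line bundle uniquely and the identification follows.
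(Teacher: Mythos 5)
Your proposal is correct and follows essentially the same route as the paper: decompose $\tau_n^{-1}(\rho_n^{-1}((V_n/S_n)^{\wedge_b}))$ into components via Lemma \ref{Lem:nested_Hilb_local} and Lemma \ref{Lem:alpha_decomp}(2), push forward component by component (with the $S_2$-quotient component contributing a trivial and a sign isotypic piece), and pin down the remaining line bundle summand via $\Lambda^n\mathcal{T}_n\cong\Str_n(1)$. Your detour through the global computation $\mathcal{T}_2\cong\Str_{H_2}\oplus\Str_{H_2}(1)$ and the Picard-group remark are harmless extra detail: as you yourself note at the end, the determinant identity alone already forces the last summand to be $\Str_n(1)|_{\wedge}$, which is exactly the paper's one-line argument.
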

\begin{proof}
Lemma \ref{Lem:nested_Hilb_local} together with (2) of Lemma \ref{Lem:alpha_decomp} gives
a description of $$\tau_n: \tau_n^{-1}(\rho_n^{-1}((V_n/S_n)^{\wedge_b}))
\rightarrow \rho^{-1}((V_n/S_n)^{\wedge_b}.$$ In particular, each of the $n-1$
components of $\tau_n^{-1}(\rho_n^{-1}((V_n/S_n)^{\wedge_b}))$  contributes
one copy of $\Str_{\rho_n^{-1}((V_n/S_n)^{\wedge_b})}$ to the restriction of
$\mathcal{T}_n$. There is one more summand:
the sign invariant part of the push-forward from the component (ii) in Lemma \ref{Lem:nested_Hilb_local}.
This is a line bundle that is forced to be $\Str_{\rho_n^{-1}((V_n/S_n)^{\wedge_b})}(1)$
because of the isomorphism $\Lambda^n \mathcal{T}_n\cong \Str_{n}(1)$.
\end{proof}

Similarly to the previous section, the results of this one hold over
$\Ring$ and hence over $\F$.

\section{Preliminaries on quantizations and Procesi bundles}
\subsection{Quantizations}\label{SS_quant_general}
Let us start with a general situation. Let $(X,\omega)$ be a smooth symplectic variety over $\C$.
In particular, the structure sheaf $\Str_X$ carries a Poisson bracket $\{\cdot,\cdot\}$ induced by $\omega$.
A  {\it formal quantization} $\mathcal{D}_\hbar$ of $X$ is a sheaf
of $\C[[\hbar]]$-algebras on $X$ (in the Zariski topology) together with an isomorphism $\kappa:\mathcal{D}_\hbar/(\hbar)\xrightarrow{\sim}
\Str_X$ such that
\begin{itemize}
\item[(a)]  $\mathcal{D}_\hbar$ is flat over $\C[[\hbar]]$ (i.e., there are no nonzero local sections
annihilated by $\hbar$) and complete and separated in the $\hbar$-adic topology (meaning that
$\mathcal{D}_\hbar\xrightarrow{\sim}\varprojlim_{n\rightarrow+\infty} \mathcal{D}_\hbar/(\hbar^n)$, where the inverse
limit is taken in the category of sheaves).
\item[(b)] $\kappa(\frac{1}{\hbar}[a,b])=\{\kappa(a),\kappa(b)\}$ for any local sections $a,b$ of $\mathcal{D}_\hbar$.
\end{itemize}

Assume now that $X$ carries an action of $\C^\times$ such that $t.\omega=t^d \omega$.
By a {\it grading} on $\D_\hbar$ we mean an action of $\C^\times$ on $\D_\hbar$ by sheaf of algebras
automorphisms such that $t.\hbar=t^d \hbar$ and $\varpi$ is $\C^\times$-equivariant. By a graded formal
quantization we mean a formal quantization together with a grading. From here we can define
a sheaf of filtered algebras in the conical topology (where  ``open''  means Zariski open
$\C^\times$-stable). Namely, for such a subset $U$, we set $\D(U):=D_\hbar(U)_{fin}/(\hbar-1)$,
where we write $\D_\hbar(U)_{fin}$ for the subalgebra of $\C^\times$-finite elements in
$\D_\hbar(U)$. It is easy to see that the algebras $\D(U)$ form a sheaf in the conical topology
to be denoted by $\D$. This sheaf comes with
\begin{itemize}
\item a complete and separated filtration (induced
by the $\hbar$-adic filtration on $\D_\hbar$)
\item and  an isomorphism $\gr\D\xrightarrow{\sim} \Str_{X}$
of sheaves of graded Poisson algebras in the conical topology (induced by $\kappa$).
\end{itemize}

By  a {\it microlocal filtered quantization} of $\Str_X$ we mean a sheaf $\D$ of algebras in the conical
topology on $X$ with the two additional structures above. Then we can recover a graded formal quantization
from $\D$ by taking the $\hbar$-adically completed Rees sheaf.

We will need a classification result for the formal graded (equivalently, microlocal filtered)
quantizations under certain cohomology
vanishing assumptions that was obtained in \cite[Section 2.3]{quant_iso} as a ramification of a classification
result for all formal quantizations from \cite{BK_quant}. Namely, in \cite[Section 4]{BK_quant}
Bezrukavnikov and Kaledin introduced the noncommutative period map $\mathsf{Per}$ from the set of isomorphism classes of formal quantizations to $H^2(X,\C)[[\hbar]]$. Further, it was shown in \cite[Section 2.3]{quant_iso} that for graded
formal quantizations, $\mathsf{Per}$ takes values in $H^2(X,\C)$, the subspace of elements
of $H^2(X,\C)[[\hbar]]$ independent of $\hbar$.

\begin{Lem}\label{Lem:gr_form_quant_classif}
Assume that $H^i(X,\Str_X)=0$ for $i=1,2$. Then  $\mathsf{Per}$ is  an isomorphism between the set of isomorphism classes
of  graded formal quantizations and $H^2(X,\C)$.
\end{Lem}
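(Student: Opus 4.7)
The plan is to bootstrap the stated lemma off the Bezrukavnikov--Kaledin classification of all formal quantizations, using the fact that graded ones are exactly those whose period is constant in $\hbar$. Under the hypothesis $H^i(X,\Str_X)=0$ for $i=1,2$, \cite[Section 4]{BK_quant} gives a bijection $\mathsf{Per}$ between isomorphism classes of formal quantizations and $H^2(X,\C)[[\hbar]]$. The lemma just asserts that the constant locus $H^2(X,\C)\subset H^2(X,\C)[[\hbar]]$ corresponds bijectively to isomorphism classes of graded formal quantizations, and the fact that $\mathsf{Per}$ of a graded quantization is constant in $\hbar$ is the cited statement from \cite[Section 2.3]{quant_iso}. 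So the real content is to promote the BK bijection to one that respects gradings.

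\textbf{Surjectivity.} Given $\alpha\in H^2(X,\C)$, let $\D_\hbar$ be a formal quantization with $\mathsf{Per}(\D_\hbar)=\alpha$, unique up to isomorphism by BK. For $t\in\C^\times$, twist $\D_\hbar$ by the given $\C^\times$-action on $X$: transport the structure via $t\colon X\to X$ and rescale $\hbar\mapsto t^{-d}\hbar$. This defines a new formal quantization $t^\star\D_\hbar$, and the BK period map is covariant for this twist: $\mathsf{Per}(t^\star\D_\hbar)=t^\star\alpha=\alpha$, since $\alpha$ is constant in $\hbar$ and the $\C^\times$-action on $H^2(X,\C)$ is trivial (it comes from an action on a connected variety on a discrete cohomology group). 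Hence $t^\star\D_\hbar\simeq \D_\hbar$ for each $t$. One then promotes this family of isomorphisms to an honest grading by a standard rigidification argument: the automorphism group sheaf of $\D_\hbar$ as a formal quantization is pro-unipotent over $\Str_X$ modulo its constant part, and the obstruction to averaging a family of isomorphisms over $\C^\times$ into an equivariant structure lies in $H^1$ of a coherent sheaf built from $\Str_X$, which vanishes by hypothesis.

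\textbf{Injectivity.} Suppose $\D_\hbar^1,\D_\hbar^2$ are graded formal quantizations with equal period. By BK there is an isomorphism $\Phi\colon \D_\hbar^1\xrightarrow{\sim}\D_\hbar^2$ of formal quantizations. The same averaging machinery applied to the $\C^\times$-orbit $\{t^\star\Phi\}_{t\in\C^\times}$ produces a $\C^\times$-equivariant, and hence grading-preserving, isomorphism; again the obstruction lies in an $H^1$ that vanishes by the assumption on $X$.

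\textbf{Main obstacle.} The nontrivial technical point is the rigidification step: one has to show that the functor assigning to an affine $\C^\times$-scheme $S$ the set of $S$-families of automorphisms of a formal quantization is representable by a pro-unipotent group over $\Str_X$ whose Lie algebra has vanishing $H^1$ and $H^2$. The cleanest way to organize this is via the standard Hochschild-cochain description of deformations and automorphisms of a formal quantization used in \cite{BK_quant}: the truncated Hochschild complex of $\D_\hbar$ has associated graded computed by polyvector fields, so $H^1$-vanishing for automorphisms and $H^2$-vanishing for deformations both reduce, via the Hodge-type decomposition for symplectic $X$, to the assumption $H^i(X,\Str_X)=0$ for $i=1,2$.
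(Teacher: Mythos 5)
The paper does not prove this lemma; it simply cites \cite[Section 2.3]{quant_iso}. So the question is whether your reconstruction is a sound proof on its own terms. Your overall strategy --- exploit the Bezrukavnikov--Kaledin bijection $\mathsf{Per}$ for \emph{all} formal quantizations, note that it is equivariant for the $\C^\times$-action $t\mapsto t^\star$ on quantizations and the rescaling action on $H^2(X,\C)[[\hbar]]$, and then cut down to fixed points --- is essentially the strategy used in \cite{quant_iso}, and also shadows the argument the paper gives for the parallel statement Corollary~\ref{Cor:lag_quantization}. So the architecture is right.

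The gap is in the rigidification step, and it is a real one. You assert that the obstruction to upgrading a family of isomorphisms $\phi_t\colon\D_\hbar\xrightarrow{\sim}t^\star\D_\hbar$ to an honest $\C^\times$-equivariant structure ``lies in $H^1$ of a coherent sheaf built from $\Str_X$, which vanishes by hypothesis.'' That is not where the obstruction lives, and the hypothesis $H^i(X,\Str_X)=0$ for $i=1,2$ is not what kills it. The obstruction to rigidifying a $\C^\times$-fixed isomorphism class into an equivariant object is a \emph{group}-cohomology class of $\C^\times$ with coefficients in the pro-unipotent automorphism group of $\D_\hbar$ (one has a gerbe over $\C^\times$ banded by $\mathrm{Aut}(\D_\hbar)$, or, in the torsor formulation the paper uses in Corollary~\ref{Cor:lag_quantization}, a $1$-cocycle $t\mapsto\phi(t)$ valued in a pro-vector space with rational $\C^\times$-action). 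This class vanishes because $\C^\times$ is linearly reductive in characteristic zero, so $H^{\geqslant 1}(\C^\times, V)=0$ for rational representations $V$ and hence also for pro-unipotent groups built from them --- exactly the coboundary argument spelled out in Corollary~\ref{Cor:lag_quantization}. The role of $H^1(X,\Str_X)=H^2(X,\Str_X)=0$ is entirely upstream: it is what makes the BK period map a bijection on all (not-necessarily-graded) formal quantizations in the first place. Your proof conflates these two distinct vanishing mechanisms, so as written the key averaging step is unjustified; replace the appeal to $H^1(X,\Str_X)$ there by the reductivity/coboundary argument and the proof goes through.
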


Note that if $\D$ is a filtered microlocal quantization of $X$, then $\D^{opp}$ is naturally a filtered
microlocal quantization of $X^{opp}$, the same variety as $X$ but with  symplectic
form $-\omega$. By \cite[Section 2.3]{quant_iso}, we have
\begin{equation}\label{eq:period_opposite}
\mathsf{Per}(\D^{opp})=-\mathsf{Per}(\D).
\end{equation}

A microlocal quantization $\D(=\D_n)$ of $H_n$ that we need is constructed in the following
way. We first consider the microlocalization $D_R$ of the algebra $D(R)$
of differential operators to $T^*R$. The sheaf $D_R$ is
a microlocal filtered quantization of $T^*R$.  Consider the quantum comoment map $\Phi:\g\rightarrow D(R)$ given
by $\xi\mapsto \xi_R$. Then  we form the quotient $D_R/D_R \Phi(\g)$, it is scheme-theoretically
supported on $\mu^{-1}(0)$. Recall that $\varphi$ denotes the quotient map $\mu^{-1}(0)^{\theta-s}
\rightarrow H_n$. We set
$$\D:=(\varphi_*\left([D_R/D_R\Phi(\g)]|_{(T^*R)^{\theta-s}}\right))^G.$$
This is a filtered microlocal quantization on $H_n$.

Let us now discuss the global sections of $\D$. Let us write $\A_n$ for $\Weyl(V_n)^{S_n}$.

\begin{Lem}\label{Lem:glob_sections}
We have $R\Gamma(\D)=\A_n$, i.e.,  $H^i(H_n,\D)=0$ for $i>0$ and $\Gamma(\D)=\A_n$.
\end{Lem}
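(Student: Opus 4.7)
The plan is to deduce both the cohomology vanishing and the computation of $\Gamma(\D)$ from two largely independent inputs: the structural vanishing $H^{>0}(H_n,\Str_n)=0$ of Lemma~\ref{Lem:Hilbert_cohom_vanishing}, and the classical identification of the quantum Hamiltonian reduction of $D(R)$ by $G$ with $\A_n$.

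First I would pass to the Rees/formal companion $\D_\hbar$ of $\D$, a $\C[[\hbar]]$-flat sheaf of algebras on $H_n$ with $\D_\hbar/\hbar\D_\hbar\cong \Str_{H_n}$, so that $\D$ is recovered from the $\C^\times$-finite part of $\D_\hbar$ by setting $\hbar=1$. Using the short exact sequences
\[
0\to \Str_{H_n}\xrightarrow{\hbar^k}\D_\hbar/\hbar^{k+1}\to \D_\hbar/\hbar^k\to 0
\]
and Lemma~\ref{Lem:Hilbert_cohom_vanishing}, I would induct on $k$ to get $H^{>0}(H_n,\D_\hbar/\hbar^k)=0$ and that $\Gamma(\D_\hbar/\hbar^k)$ is free over $\C[\hbar]/\hbar^k$ with reduction $\C[V_n]^{S_n}$ modulo $\hbar$. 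Passing to the inverse limit in $k$ (Mittag-Leffler applies since the transition maps are surjective) yields $R\Gamma(\D_\hbar)=\Gamma(\D_\hbar)$, a $\C[[\hbar]]$-flat formal quantization of $\C[V_n]^{S_n}$. Taking $\C^\times$-finite parts and setting $\hbar=1$ transfers this to the microlocal picture: $R\Gamma(\D)=\Gamma(\D)$ is a filtered algebra with $\gr\Gamma(\D)=\C[V_n]^{S_n}$.

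For the identification with $\A_n$, the construction of $\D$ as a $G$-invariant pushforward from $(T^*R)^{\theta-s}$ provides a natural filtered algebra map $\mathbf{A}_0:=[D(R)/D(R)\Phi(\g)]^G\to \Gamma(\D)$ by restricting global sections, using that the complement of the stable locus has codimension $\geq 2$ to extend invariant sections. Since $\mu:T^*R\to\g^*$ is flat in this setting, the Koszul complex of $\mu^*(\g)$ resolves $\C[\mu^{-1}(0)]$, and taking $G$-invariants yields $\gr \mathbf{A}_0=\C[\mu^{-1}(0)]^G=\C[V_n]^{S_n}$. The induced map on associated gradeds is the identity, so the filtered map $\mathbf{A}_0\to \Gamma(\D)$ is itself an isomorphism. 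Finally, the Harish-Chandra/radial-parts construction of Etingof-Ginzburg identifies $\mathbf{A}_0$ with the spherical rational Cherednik algebra at parameter zero, which at $c=0$ is literally $\Weyl(V_n)^{S_n}=\A_n$.

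The main obstacle is this last identification $\mathbf{A}_0\cong \A_n$, since that is where genuine representation-theoretic input (the radial parts map) enters, rather than sheaf-theoretic formalities. The preceding cohomology vanishing and the computation of $\gr\Gamma(\D)$ are standard consequences of Lemma~\ref{Lem:Hilbert_cohom_vanishing} once one passes to the formal quantization and exploits the flatness of the classical moment map.
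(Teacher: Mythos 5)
Your argument is correct and follows the same logical skeleton as the paper's proof: cohomology vanishing comes from Lemma~\ref{Lem:Hilbert_cohom_vanishing}, the identification $\Gamma(\D)\cong [D(R)/D(R)\Phi(\g)]^G$ is the content the paper cites from \cite[Lemma 4.2.4]{quant_iso}, and the final identification with $\A_n$ is the paper's citation of \cite[Section 6]{GG}. You unwind the middle citation rather than invoke it, and you attribute the last step to the Etingof--Ginzburg radial-parts picture rather than to Gan--Ginzburg; those are essentially the same input, though note that the flatness of $\mu$ (equivalently, that the almost-commuting variety $\mu^{-1}(0)$ is a complete intersection) which you use to control $\gr\mathbf{A}_0$ is itself a nontrivial theorem of \cite{GG}, not a formality.
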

\begin{proof}
The cohomology vanishing part follows from $H^i(H_n, \Str_{n})=0$, see Lemma
\ref{Lem:Hilbert_cohom_vanishing}. By \cite[Lemma 4.2.4]{quant_iso},
$\Gamma(\D)$ coincides with the global Hamiltonian reduction $[D(R)/D(R)\Phi(\g)]^G$. The latter
is isomorphic to $\A_n$ by results of \cite[Section 6]{GG}.
\end{proof}

Lemma \ref{Lem:gr_form_quant_classif} applies to $X=H_n$. Recall, Section \ref{SS_Hilbert_general}, that
$H^2(H_n,\C)$ is identified with $\C$.
Consider the symmetrized quantum comoment map $\Phi^{sym}:\g\rightarrow D(R)$ given by
$\Phi^{sym}(\xi)=\frac{1}{2}(\xi_R+\xi_{R^*})$. Note that $\xi_{R^*}=\xi_R+\operatorname{tr}(\xi)$
so $\Phi(x)=\Phi^{sym}(\xi)-\frac{1}{2}\operatorname{tr}(\xi)$.
By \cite[Section 5]{quant_iso}, we have
\begin{equation}\label{eq:period_quantization}
\mathsf{Per}(\D)=-\frac{1}{2}.
\end{equation}

Now we proceed to discussing quantizations in positive characteristic.

We can do the same  construction  of quantum Hamiltonian reduction
in characteristic $p$ getting a microlocal filtered sheaf of
algebras $\D^\F$ on $H^\F_n$. In fact, this can be done over a sufficiently large finite
localization $\Ring$ of $\Z$: we get a microlocal sheaf of algebras $\D^\Ring$
on $H_n^{\Ring}$ with $\D=\C\widehat{\otimes}_{\Ring}\D^\Ring$ and $\D^\F=\F\widehat{\otimes}_{\Ring}\D^\Ring$
(we take the completed tensor product with respect to the topology defined by the filtration).
As was checked in \cite[Lemma 4.4]{Procesi_review}, $R\Gamma(\D^\Ring)=\A_n^\Ring$.
Hence $R\Gamma(\D^\F)=\A_n^\F$.

We can also construct a version of a quantization that is a genuine coherent sheaf on
the Frobenius twist $H_{n}^{\F(1)}$ following \cite[Section 4]{BFG}. Namely, $D(R^{\F})$
is an Azumaya algebra on $T^*R^{\F(1)}$. Consider the restriction
$D(R^\F)|_{(\mu^{(1)})^{-1}(0)^{\theta-s}}$. The Lie algebra $\g^\F$ acts
on this Azumaya algebra by $\Str_{(\mu^{(1)})^{-1}(0)^{\theta-s}}$-linear
derivations. It was shown in \cite[Section 4]{BFG} that the sheaf of invariants
$[D(R^\F)|_{(\mu^{(1)})^{-1}(0)^{\theta-s}}]^{\g}$ is also an Azumaya
algebra, it is $G^{\F(1)}$-equivariant.  Now we can define the sheaf
$\underline{\D}^\F$ by
$$\underline{\D}^\F:=(\varphi_*^{(1)}\left([D(R^\F)|_{(\mu^{(1)})^{-1}(0)^{\theta-s}}]^{\g}\right))^{G^{\F(1)}}.$$
This is a sheaf of Azumaya algebras on $H_{n}^{\F(1)}$ of rank $p^{2n}$. Its restriction to the conical
topology comes with a filtration such that  $\gr \underline{\D}^\F=\operatorname{Fr}_*\Str^\F_{n}$,
where we write $\operatorname{Fr}$ for the Frobenius morphism $H_n^\F\rightarrow H_n^{\F(1)}$.
So it is a Frobenius-constant quantization in the terminology of \cite[Section 3.3]{BK}.
We have $R\Gamma(\underline{\D}^\F)=\A_n^\F$ by \cite[Theorem 4.1.4]{BFG}.

Let us explain how to pass from $\underline{\D}^\F$ to $\D^\F$. By the construction,
we have a homomorphism $\underline{\D}^\F\rightarrow \Fr_* \D^\F$ of sheaves of filtered
algebras on $H_n^{\F(1)}$. The associated graded homomorphism is the identity.
In particular, it is strictly compatible with filtrations. So we see that
$\Fr_*\D^\F$ is the completion of $\underline{\D}^\F$.

\subsection{Bezrukavnikov-Kaledin construction of Procesi bundles}\label{SS_Procesi_constr}
In \cite[Section 6]{BK}, Bezrukavnikov and Kaledin used the Azumaya algebra $\underline{\D}^\F$
on $H_n^{\F(1)}$ to construct a Procesi bundle. The Azumaya algebra $\underline{\D}^\F$ does not
split. However, its restriction $\underline{\D}^{\F\wedge}$ to $H_{n}^{\F(1),\wedge}:=\rho_n^{-1}((V_n^{\F(1)}/S_n)^{\wedge})$,
where $(V_n^{\F(1)}/S_n)^{\wedge}$ denotes  the formal neighborhood of $0$
in $V_n^{\F(1)}/S_n$,  does. This was proved in \cite[Section 6.3]{BK} (see also \cite[Section 6.2]{BL}
for a different proof). Let $\hat{\E}^\F$ denote a splitting bundle. Note that the endomorphism
algebra of $\hat{\E}^\F$ is identified with $\A_n^{\F\wedge}$ (the completion
at $0\in V_{n}^{\F(1)}/S_n$). And there are no higher self-extensions of $\hat{\E}^\F$,
thanks to the formal function theorem and $R^i\Gamma(\underline{\D}^\F)=0$ for $i>0$.

The Azumaya algebra $\Weyl(V_n^\F)^{\wedge}$ admits a standard $S_n$-equivariant splitting
module leading to a Morita equivalence $\A_n^{\F\wedge}\cong \widetilde{A}_n^{\F\wedge}$.
Here and below we write $\widetilde{A}_n^\F$ for the algebra $\F[V_n]\# S_n$.
Also $\Weyl(V_n^\F)\# S_n$ and $\A_n^\F$
are Morita equivalent via the bimodule $\Weyl(V_n^\F)$. It follows that the algebras
$\A_n^{\F\wedge}$ and $\widetilde{A}_n^{\F\wedge}$ are Morita equivalent. Let us write
$\hat{\B}_n^\F$ for the Morita equivalence $\widetilde{A}_n^{\F\wedge}$-$\A_n^{\F\wedge}$-bimodule,
and $\hat{\B}_n^{o,\F}$ for the inverse bimodule. In particular, we have
$$\widetilde{A}_n^{\F\wedge}=\hat{\B}_n^{\F}\otimes_{\A_n^{\F\wedge}}\hat{\B}_n^{o,\F}.$$

We set (basically following \cite[Section 4.3]{SRA_der})
\begin{equation}\label{eq:Procesi_compl_definition}
\hat{\Pro}^\F_n:=\hat{\B}_n^\F\otimes_{\A_n^{\F\wedge}}\hat{\E}^\F.
\end{equation}
Since $\hat{\B}_n^{\F}$ is projective as a right $\A_n^{\F\wedge}$-module, we see that
$\hat{\Pro}^\F_n$ has no higher self-extensions and
$$\End(\hat{\Pro}_n^\F)=\hat{\B}_n^\F\otimes_{\A_n^{\F\wedge}}\End(\hat{\E}_\F)
\otimes_{\A_n^{\F\wedge}}\hat{\B}_n^{o,\F}=\widetilde{A}_n^{\F\wedge}.$$

Also, since $\hat{\Pro}^\F_n$ has no higher self-extensions, it admits a
$T_c^{\F(1)}$-equivariant structure, see \cite{Vologodsky}. Since the $T_c^{\F(1)}$-action on $H_n^{\F(1)}$
is contracting, the completion functor is an equivalence $$\Coh^{T_c^{(1)}}(H_n^{\F(1)})
\xrightarrow{\sim} \Coh^{T_c^{(1)}}(H_n^{\F(1)\wedge}).$$
Let $\Pro^\F_n$ be the $T_c^{\F(1)}$-equivariant sheaf on $H_n^{\F(1)}$ corresponding
to $H_n^{\F(1)\wedge}$. Note that as a sheaf $\Pro_n^\F$ is independent of the
choice of an equivariant structure on $\hat{\Pro}_n^\F$. Moreover, $\Pro_n^\F$
has no higher self-extensions, and we have $\End(\Pro_n^\F)\cong \widetilde{A}_n^{\F(1)}$.
We can twist $\Pro_n^\F$ with a line bundle and achieve that $\left( \Pro_n^\F\right)^{S_n}=\Str_{n}^{\F(1)}$.

Let us also point out that, since $H_n^\F$ is defined over $\F_p$, we have $H_n^{\F(1)}\cong H_n^\F$.
So we can view $\Pro_n^\F$ as a vector bundle over $H_n^\F$.

Then one can lift $\Pro_n^\F$ to characteristic $0$ as described in \cite[Sections 2.3, 6.4]{BK}
getting a Procesi bundle $\Pro_n$ on $H_n$.
%. First, for a sufficiently
%large power $q$, the bundle $\Pro_n^\F$ and the isomorphism  $\End(\Pro_n^\F)\cong \widetilde{A}_n^\F$
%are defined over $\F_q$. Since $\Pro_n^{\F_q}$ has no higher self-extensions,
%it admits a unique extension to $H_n^{\Ring'}$ for a suitable algebraic
%extension $\Ring'$ of $\Z_p$ with residue field $\F_q$. It was shown in
%\cite{BK}, that we still have an algebra isomorphism $\End(\Pro_n^{\Ring'})\cong
%\widetilde{A}^{\Ring'}_n$. Then we change the base from $\Ring'$ to $\C$
%getting a bundle $\Pro_n$ over $H_n$. It has no higher-self extensions and
%we have an isomorphism $\End(\Pro_n)\cong \widetilde{A}_n$. Moreover, $\Pro$
%admits a $T_c$-equivariant structure. By twisting this structure, we can achieve
%that the isomorphism $\End(\Pro_n)\cong \widetilde{A}_n$ is graded.

\subsection{Procesi bundles, axiomatically}
By a Procesi bundle on $H_n$ we mean a $T_c$-equivariant vector bundle $\Pro$ together with
a graded $\C[V_n]^{S_n}$-algebra isomorphism $\End(\Pro)\xrightarrow{\sim} \widetilde{A}_n^{opp}$  satisfying
\begin{itemize}
\item[(i)] $\Ext^i(\Pro,\Pro)=0$ for all $i>0$,
\end{itemize}
By an isomorphism  of two Procesi bundles $\Pro,\Pro'$ we mean a $\C^\times$-equivariant
isomorphism $\Pro\xrightarrow{\sim} \Pro'$ such that the corresponding isomorphism
$\widetilde{A}_n\xrightarrow{\sim} \widetilde{A}_n$ is inner (this algebra  has a graded outer automorphism given by $\alpha\mapsto \alpha, \sigma\mapsto (-1)^{\sigma}\sigma$
for $\alpha\in V^*_n, \sigma\in S_n$). Thanks to the isomorphism $\End(\Pro)\xrightarrow{\sim} \widetilde{A}_n^{opp}$,
every fiber of $\Pro$ is the regular representation of $S_n$. It follows that $\Pro^{S_n}$ (as well
as the sign component $\Pro^{\operatorname{sgn}}$) is a line bundle.

By a {\it normalized} Procesi bundle we mean a Procesi bundle satisfying the additional condition:
\begin{itemize}
\item[(ii)] $\Pro^{S_n}\cong \Str_{n}$ (an isomorphism of $T_c$-equivariant vector bundles).
\end{itemize}
We can twist every Procesi bundle with a line bundle (and take the induced isomorphism
$\End(\Pro)\xrightarrow{\sim} \widetilde{A}_n$) to achieve (ii).

In particular,  the Bezrukavnikov-Kaledin bundle $\Pro_n$ recalled in
Section \ref{SS_Procesi_constr}, is a normalized Procesi bundle.

The following proposition is a special case of \cite[Theorems 1.1,1.2]{L_Procesi}.
Note that we have an anti-involution of $\tilde{A}_n$ defined by $\alpha\mapsto \alpha,
\sigma\mapsto \sigma^{-1}$.
So the dual of a (normalized) Procesi bundle is also a (normalized) Procesi bundle.

\begin{Prop}\label{Prop:Procesi_classif}
The following statements hold.
\begin{enumerate}
\item There are exactly two distinct normalized Procesi bundle on $H_n$.
\item They are dual to each other.
\item For one of these Procesi bundles we have $\Pro^{S_{n-1}}\cong \mathcal{T}_n$.
\end{enumerate}
\end{Prop}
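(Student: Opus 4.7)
The plan is to derive parts (1) and (2) by directly specializing the classification of normalized Procesi bundles in \cite{L_Procesi} to the symplectic resolution $H_n\to V_n/S_n$: here the relevant Namikawa Weyl group is $\Z/2\Z$, producing exactly two normalized Procesi bundles interchanged by the Namikawa reflection, and the reflection can be identified with the duality coming from the anti-involution $\sigma\mapsto \sigma^{-1}$ of $\widetilde{A}_n$ noted before the statement. So parts (1) and (2) require no work beyond citing those theorems. The substance lies in (3), which needs additional geometric input to single out one of the two candidates.

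For (3), I would first make a representation-theoretic observation: every fiber of a Procesi bundle $\Pro$ is the regular representation of $S_n$, so $\Pro^{S_{n-1}}$ is a vector bundle of rank $n$ that decomposes, by Frobenius reciprocity applied to $\operatorname{Ind}_{S_{n-1}}^{S_n}\mathbf{1}=\mathbf{1}\oplus V_{\mathrm{std}}$, as the sum of the trivial and standard isotypic components of $\Pro$. For a normalized $\Pro$ the trivial summand is $\Str_{n}$, which nicely matches the canonical sub-bundle $\Str_n\hookrightarrow \mathcal{T}_n$. Next I would compare $\Pro^{S_{n-1}}$ to $\mathcal{T}_n$ on the open set $V_n^1/S_n$, using the formal local model of Lemma \ref{Lem:Hilb_local} together with the restriction formula $\mathcal{T}_n|_{\rho_n^{-1}((V_n/S_n)^{\wedge_b})}\cong \Str^{\oplus n-1}\oplus \Str(1)$ from Lemma \ref{Lem:taut_restriction}. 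This reduces the required local isomorphism to the case $n=2$, where $H_2=\operatorname{Bl}_\Delta(V_2/S_2)$ and $U_2=\operatorname{Bl}_\Delta(V_2)$ by Examples \ref{Ex_Hilb2} and \ref{Ex:universal2}, and where the two normalized Procesi bundles can be written down explicitly: one is $\mathcal{T}_2=\tau_{2*}\Str_{U_2}$ and the other is its dual. This computation fixes which of the two abstract Procesi bundles has $\Pro^{S_{n-1}}\cong \mathcal{T}_n$ on the formal neighborhoods of generic codimension one singularities, and hence over all of $V_n^1/S_n$.

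To globalize, I would use that both $\Pro^{S_{n-1}}$ and $\mathcal{T}_n$ are reflexive (in fact locally free) sheaves on $H_n$ and that the complement $H_n\setminus \rho_n^{-1}(V_n^1/S_n)$ has codimension at least $2$, so any isomorphism over the open locus extends. The main obstacle I expect is ruling out an ambiguity by a twist in $\operatorname{Pic}(H_n)\cong \Z$: a priori the local isomorphism determines $\Pro^{S_{n-1}}$ only up to tensoring with $\Str_n(k)$ for some integer $k$. I would pin down $k=0$ by comparing top exterior powers, using $\Lambda^n\mathcal{T}_n\cong \Str_n(1)$ together with the determinant of $\Pro^{S_{n-1}}=\Str_n\oplus \mathcal{V}$, where the determinant of the standard isotypic piece $\mathcal{V}$ is controlled by the sign isotypic component of $\Pro$ — itself normalized by the choice made in part (1). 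The two normalized Procesi bundles yield determinants differing by $\Str_n(1)$, so only one of them matches $\Lambda^n\mathcal{T}_n$, which both confirms that a unique $\Pro$ satisfies (3) and identifies it as the one whose sign isotypic component has the correct degree.
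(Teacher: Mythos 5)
The paper's own proof of this proposition is a one‑line citation: it states that Proposition~\ref{Prop:Procesi_classif} "is a special case of \cite[Theorems 1.1, 1.2]{L_Procesi}" and gives no further argument. Your treatment of parts (1) and (2) is the same citation, so there you match the paper. For part (3) you depart from the paper and attempt a geometric argument; this is a genuinely different route, but as written it has a gap.

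The gap is in the local-to-global step. You establish an isomorphism $\Pro^{S_{n-1}}\cong \mathcal{T}_n$ only over formal neighborhoods of generic codimension-one points of $V_n/S_n$ (using the $n=2$ computation and the structure of the Procesi bundle on such neighborhoods, which is itself \cite[Proposition 4.1]{L_Procesi}). You then write that the isomorphism holds "hence over all of $V_n^1/S_n$," but this does not follow: two rank-$n$ vector bundles on an open variety need not be isomorphic just because they agree on $V_n^0/S_n$ and on formal neighborhoods of finitely many classes of generic codimension-one points. The formal-local isomorphisms do not automatically glue. Moreover, the Picard-twist ambiguity you then try to eliminate by a determinant computation is not actually the remaining obstacle — once you have a formal-local isomorphism near a generic codimension-one point, the twist $\Str_n(k)$ is already forced to satisfy $k=0$ because $\Str_n(1)$ restricts nontrivially to that formal neighborhood (its restriction is $\Str_2^\wedge(1)$). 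So the determinant comparison is not what is missing; what is missing is a reason the two bundles are isomorphic at all, rather than merely formally locally isomorphic and with matching determinant. To close the gap you would need some rigidity input — for instance, that $\Ext^1(\mathcal{T}_n,\mathcal{T}_n)=0$ and a deformation-theoretic argument, or a direct proof that $\Hom(\Pro^{S_{n-1}},\mathcal{T}_n)$ contains an isomorphism by pushing from the $n=2$ chart — which is precisely the kind of analysis carried out in \cite{L_Procesi}, and which the paper therefore chooses to cite rather than redo.
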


The following lemma  can be proved similarly to (1),(3) of Proposition \ref{Prop:Procesi_classif},
but we will deduce it from this proposition and another result of \cite{L_Procesi}.

\begin{Lem}\label{Lem:Procesi_sgn}
If $\Pro^{S_{n-1}}\cong \mathcal{T}_n$, then $\Pro^{\sgn}\cong \Str_n(1)$.
\end{Lem}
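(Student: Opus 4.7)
The plan is to reduce to the classification of normalized Procesi bundles in Proposition~\ref{Prop:Procesi_classif}. First, decompose $\Pro = \bigoplus_{\lambda \vdash n} V_\lambda \otimes M_\lambda$ into $S_n$-isotypic components, where $M_\lambda := \Hom_{S_n}(V_\lambda, \Pro)$ is a $T_c$-equivariant vector bundle on $H_n$ of rank $\dim V_\lambda$; in particular, $M_{(n)} = \Pro^{S_n}$ and $M_{(1^n)} = \Pro^{\sgn}$ are line bundles, with $M_{(n)} \cong \Str_n$ by normalization. The branching rule for $S_{n-1} \subset S_n$ yields that $V_\lambda^{S_{n-1}}$ is one-dimensional for $\lambda \in \{(n),(n-1,1)\}$ and zero otherwise, so
\[
\Pro^{S_{n-1}} \cong M_{(n)} \oplus M_{(n-1,1)}.
\]
Combined with the hypothesis $\Pro^{S_{n-1}} \cong \mathcal{T}_n$ and the identity $\det \mathcal{T}_n = \Str_n(1)$ from Section~\ref{S_Hilbert}, this forces $\det M_{(n-1,1)} \cong \Str_n(1)$.

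Next I would use Proposition~\ref{Prop:Procesi_classif} to pin down $\Pro$ up to isomorphism. By parts (1) and (2), there are exactly two normalized Procesi bundles on $H_n$, dual to one another; for the dual $\Pro^\vee$ one has $(\Pro^\vee)^{S_{n-1}} \cong \mathcal{T}_n^\vee$, and $\mathcal{T}_n \not\cong \mathcal{T}_n^\vee$ since their determinants $\Str_n(\pm 1)$ are distinct classes in $\operatorname{Pic}(H_n) \cong \Z$. So the hypothesis selects the specific Procesi bundle supplied by part~(3) of the proposition. The identification of $\Pro^{\sgn}$ thereby reduces to a computation for this single bundle, and the answer is a line bundle of the form $\Str_n(k)$ with $k \in \{+1,-1\}$, the two normalized Procesi bundles realizing these two values.

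To fix $k = +1$ I would invoke the additional result of \cite{L_Procesi} referenced in the excerpt, which pairs the isomorphism $\Pro^{S_{n-1}} \cong \mathcal{T}_n$ with $\Pro^{\sgn} \cong \Str_n(+1)$ on the same normalized Procesi bundle. This can be verified directly on any concrete model: for Haiman's bundle $\Pro_n^H = \eta_{n*} \Str_{IH_n}$, the sign component $(\Pro_n^H)^{\sgn} = \eta_{n*}\bigl((\Str_{IH_n})^{\sgn}\bigr)$ is the Vandermonde line bundle, readily identified with $\Str_n(+1)$. The main obstacle is precisely this sign-matching step: Proposition~\ref{Prop:Procesi_classif} alone only determines the pair $(\Pro^{S_{n-1}},\Pro^{\sgn})$ up to simultaneous dualization $(\mathcal{T}_n,\Str_n(+1)) \leftrightarrow (\mathcal{T}_n^\vee,\Str_n(-1))$, so without an independent input one cannot distinguish $\Str_n(+1)$ from $\Str_n(-1)$.
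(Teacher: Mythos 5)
Your argument identifies the right strategic input (Proposition~\ref{Prop:Procesi_classif} determines the bundle uniquely once $\Pro^{S_{n-1}}\cong\mathcal{T}_n$ is imposed), but then stalls at the place where the actual content of the lemma lies, and it stalls for a reason you only partially diagnose. The step you slide over is the assertion that $\Pro^{\sgn}\cong\Str_n(k)$ with $k\in\{+1,-1\}$. Since $\operatorname{Pic}(H_n)\cong\Z$, certainly $\Pro^{\sgn}\cong\Str_n(k)$ for some integer $k$; and duality with the outer twist gives that the two normalized Procesi bundles have sign components $\Str_n(k)$ and $\Str_n(-k)$ for the same $k$. But nothing in your write-up pins $k$ to $\pm 1$ rather than, say, $0$ or $2$. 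Your earlier determinant computation $\det M_{(n-1,1)}\cong\Str_n(1)$ is correct but is never used and does not constrain $M_{(1^n)}$. You then acknowledge the residual sign ambiguity and propose to resolve it either by quoting an unnamed result of \cite{L_Procesi} or by computing on Haiman's bundle $\Pro_n^H$. The first is not what the paper cites (the \cite{L_Procesi} input used here is a \emph{restriction-to-a-formal-neighborhood} result, not a sign computation); the second is circular in this paper's framework, whose whole point is to argue without presupposing Haiman's $n!$ theorem (which is exactly what gives $\Pro_n^H$ its good properties).

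The paper's actual proof is local and quite different in texture: restrict $\Pro$ to $\rho_n^{-1}\bigl((V_n/S_n)^{\wedge_b}\bigr)$ for $b$ a generic point of the singular locus (one pair of collided points). By \cite[Proposition 4.1]{L_Procesi} this restriction is $n!/2$ copies of a rank-$2$ Procesi bundle on the local model $\operatorname{Bl}_\Delta(V_2/S_2)^{\wedge}\times V_{n-2}^{\wedge}$, hence of the form $(\Str_2^\wedge\oplus\Str_2^\wedge(\pm 1))^{\oplus n!/2}$. The hypothesis $\Pro^{S_{n-1}}\cong\mathcal{T}_n$ together with Lemma~\ref{Lem:taut_restriction} forces the $+$ sign, so the restriction of $\Pro^{\sgn}$ is $\Str_2^\wedge$ or $\Str_2^\wedge(1)$, whence $\Pro^{\sgn}\cong\Str_n(0)$ or $\Str_n(1)$ --- note the alternative is $\{0,1\}$, not your $\{+1,-1\}$. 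Finally, $\Pro^{\sgn}\not\cong\Pro^{S_n}\cong\Str_n$ because under the derived McKay equivalence these go to $\C[V_n]$ with the trivial and the sign-twisted $S_n$-action respectively, and those $\tilde{A}_n$-modules are nonisomorphic. This localization-at-a-codimension-two-wall step is the missing idea in your proposal; the classification result you invoke only tells you \emph{which} bundle you have, not what its sign component is, and you need an additional geometric argument to extract that.
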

\begin{proof}
Let $b\in V_n/S_n$ be as in Lemma \ref{Lem:Hilb_local}. By \cite[Proposition 4.1]{L_Procesi},
the restriction of $\Pro$ to $\rho_n^{-1}((V_n/S_n)^{\wedge_b})$ is isomorphic to the direct
sum of $n!/2$ copies of a Procesi bundle on $\rho_2^{-1}((V_2/S_2)^{\wedge_0})$. The latter is
isomorphic to $\Str^{\wedge}_2\oplus \Str^\wedge_2(1)$ or $\Str^\wedge_2\oplus \Str^\wedge_2(-1)$.
Since $\Pro^{S_{n-1}}\cong \mathcal{T}_n$,
Lemma \ref{Lem:taut_restriction} implies that the restriction is $(\Str_2^\wedge\oplus \Str_2^\wedge(1))^{\oplus n!/2}$.
But the restriction of $\Str_n(k)$ is $\Str_2^\wedge(k)$ for all $k\in \Z$.
It follows that $\Pro^{\sgn}\cong \Str$ or $\Str(1)$. Let us show that the former is impossible.
Indeed, under the derived equivalence $R\Hom_{\Str_{H_n}}(\Pro,\bullet)$, the bundles
$\Pro^{S_n}, \Pro^{sgn}$ both map to $\C[V]$ but in the first case the action of $S_n$
is the natural one, and in the second case it is twisted with the sign. It is easy to
see that these two $\tilde{A}_n$-modules are not isomorphic, which implies $\Pro^{\sgn}\not\cong
\Pro^{S_n}$.
%Assume that converse: $\Pro^{\sgn}\cong \Str$.
%Recall that $\End(\Pro)\cong \tilde{A}_n$, as a graded algebra. In particular, there are no endomorphisms
%of negative degree. Therefore $\Pro^{sgn}\cong \Str$ has to be a $T_c$-equivariant isomorphism.
So $\Pro^{\sgn}\cong \Str_n(1)$.
\end{proof}

\begin{Cor}\label{Cor:Procesi_reln}
Let $\Pro$ be a Procesi bundle satisfying $\Pro^{S_{n-1}}\cong \mathcal{T}_n$. Then $\Pro\cong
\Pro^*(1)$, a $\C[V_n]$-linear isomorphism of vector bundles on $H_n$.
\end{Cor}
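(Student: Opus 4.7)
The plan is to invoke the classification of normalized Procesi bundles in Proposition \ref{Prop:Procesi_classif}: there are exactly two such bundles, related by duality, and combining part (3) with Lemma \ref{Lem:Procesi_sgn} shows they are distinguished by having sign isotypic component $\Str_n(1)$ or $\Str_n(-1)$ (the other normalized bundle is the dual of the first, and the sign component of the dual is dual to the sign component). Thus the hypothesis $\Pro^{S_{n-1}}\cong \mathcal{T}_n$ forces $\Pro^{\sgn}\cong \Str_n(1)$, so $\Pro$ is the unique normalized Procesi bundle with this sign component.

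Next I would recognize $\Pro^*(1)$, after a sign-character twist, as the same normalized Procesi bundle. The dual $\Pro^*$ is a normalized Procesi bundle with $(\Pro^*)^{\sgn}\cong \Str_n(-1)$ and $(\Pro^*)^{S_n}\cong \Str_n$. Consider $\Pro^*(1)$ equipped with the $S_n$-action twisted by the sign character---equivalently, compose the identification $\End(\Pro^*(1))\cong \widetilde{A}_n^{opp}$ with the outer sign-twist automorphism of $\widetilde{A}_n$. This operation interchanges $S_n$-invariants and sign components, yielding $(\Pro^*(1)\otimes\sgn)^{S_n}\cong (\Pro^*)^{\sgn}\otimes\Str_n(1)\cong \Str_n$ and $(\Pro^*(1)\otimes\sgn)^{\sgn}\cong (\Pro^*)^{S_n}\otimes\Str_n(1)\cong \Str_n(1)$. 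Hence $\Pro^*(1)\otimes\sgn$ is a normalized Procesi bundle with sign component $\Str_n(1)$, so by the classification it is isomorphic to $\Pro$ as a Procesi bundle.

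Finally I would extract the required $\C[V_n]$-linear isomorphism of underlying sheaves. An isomorphism of Procesi bundles is a sheaf isomorphism $\phi$ inducing some inner automorphism $\operatorname{Ad}(a)$ of $\widetilde{A}_n$; composing $\phi$ with the action of $a^{-1}$ on the target yields an $\widetilde{A}_n^{opp}$-equivariant sheaf isomorphism (losing $\C^\times$-equivariance, which the statement does not require). Since sign-twisting affects only the $S_n$-part of the $\widetilde{A}_n$-action and leaves $\C[V_n]\subset \widetilde{A}_n$ fixed, the underlying $\C[V_n]$-module structure of $\Pro^*(1)\otimes\sgn$ coincides with that of $\Pro^*(1)$, producing the desired $\C[V_n]$-linear isomorphism $\Pro\xrightarrow{\sim}\Pro^*(1)$. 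The main obstacle I anticipate is exactly this last extraction---confirming that the inner-automorphism slack in the definition of a Procesi bundle isomorphism does not obstruct $\C[V_n]$-linearity---which is resolved by the $a^{-1}$-correction above; the sign-swap invariant computation is then the only substantive calculation.
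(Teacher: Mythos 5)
Your proof is correct and takes essentially the same route as the paper's: twist $\Pro^*$ by the outer sign automorphism, observe that $\Pro^*(1)$ then becomes a normalized Procesi bundle whose sign component matches that of $\Pro$, and invoke Proposition \ref{Prop:Procesi_classif} together with Lemma \ref{Lem:Procesi_sgn}. Your step 3, correcting for the inner-automorphism slack and noting that the sign twist leaves the $\C[V_n]$-module structure untouched, is a useful spelling-out of details the paper leaves implicit.
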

\begin{proof}
Twist the action of $\widetilde{A}_n$ on $\Pro^*$ with the outer automorphism mentioned above.
Then $\Pro^*(1)$ becomes normalized and also the sign components in $\Pro,\Pro^*(1)$
are isomorphic. A $\C[V_n]$-linear isomorphism $\Pro\cong
\Pro^*(1)$ now follows from Proposition \ref{Prop:Procesi_classif}
and Lemma \ref{Lem:Procesi_sgn}.
\end{proof}

\begin{Rem}\label{Rem:char_p_version}
We can define the notion of an abstract Procesi bundle over $\F(=\overline{\F}_p)$
for $p\gg 0$. The arguments of the proofs of \cite[Theorem 1.1,1.2]{L_Procesi}
carry over to this case without any significant modifications. Since $\Pro^{\sgn}$
is obtained by lifting its characteristic $p$ counterpart to characteristic $0$,
Lemma \ref{Lem:Procesi_sgn} continues to hold over $\F$. So does Corollary
\ref{Cor:Procesi_reln}.
\end{Rem}

%We note that Haiman's Procesi bundle $\Pro^H_n$ (that we are not going to use)
%satisfies $(\Pro^H_n)^{S_{n-1}}=\mathcal{T}_n$.
%We will see in the course of the proof that $\Pro_n^{S_{n-1}}=\mathcal{T}_n$.

\subsection{Quantization of lagrangian subvarieties}\label{SS_quant_lagrang}
In this section we will explain results from \cite{BGKP} on quantizations of
the structure sheaf of a smooth lagrangian subvariety $Y$ in a smooth
symplectic variety $X$ over $\C$.  Let $\mathcal{D}_\hbar$ be a formal
quantization of $\Str_X$. We want to know when there is a coherent sheaf
$\mathcal{M}_\hbar$ of $\D_\hbar$-modules that is flat over $\C[[\hbar]]$
and comes with an identification $\mathcal{M}_\hbar/\hbar \mathcal{M}_\hbar\cong \Str_Y$ of
$\Str_X$-modules. Then we call $\mathcal{M}_\hbar$ a formal quantization of $Y$.

Let $\iota$ denote the inclusion $Y\hookrightarrow X$ so that we have the pull-back map
$\iota^*: H^2(X,\C)\rightarrow H^2(Y,\C)$. Now assume that $H^i(Y,\Str_Y)=0$ for $i=1,2$.

Recall that we have the period $\mathsf{Per}(\D_\hbar)\in H^2(X,\C)[[\hbar]]$. The following is a special case of \cite[Theorem 1.1.4]{BGKP}.

\begin{Prop}\label{Prop:BGKP}
%The following statements hold.
%\begin{enumerate}
%\item
The $\Str_X$-module $\Str_Y$ admits a formal quantization $\M_\hbar$ if and only if
$$\iota^*(\mathsf{Per}(\D_\hbar))=-\frac{1}{2}c_1(K_Y).$$
%\item If a quantization of $\Str_Y$ exists, then the set of isomorphism classes of
%quantizations of $\Str_Y$ admits a simply transitive action of the group of isomorphism
%classes of $(1+\hbar \Str_Y[[\hbar]])$-torsors with a flat connection.
%\end{enumerate}
\end{Prop}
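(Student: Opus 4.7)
The plan is to reduce the question to a formal neighborhood of $Y$ in $X$ and then apply the Bezrukavnikov-Kaledin classification of formal quantizations to match $\D_\hbar$ with a canonical local model for which a formal quantization of $\Str_Y$ exists tautologically.

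The canonical local model is $T^*Y$ with the formal quantization $\D^{can}_\hbar$ given by the $\hbar$-adic Rees completion of the sheaf of differential operators on $Y$. This quantization has two key features: (a) the structure sheaf $\Str_Y$ of the zero section is naturally a module over $\D^{can}_\hbar$ via the standard action of differential operators on functions, giving a formal quantization of the zero section; and (b) the noncommutative period of $\D^{can}_\hbar$, in the symmetric ordering convention used to define $\mathsf{Per}$, equals $-\frac{1}{2}c_1(K_Y)$, viewed as an element of $H^2(T^*Y,\C)\cong H^2(Y,\C)$. By a formal Lagrangian Weinstein theorem, the formal neighborhood $X^\wedge_Y$ is symplectomorphic to the formal neighborhood of the zero section in $T^*Y$, and under this identification the restriction map $H^2(X,\C)\to H^2(X^\wedge_Y,\C)\cong H^2(Y,\C)$ coincides with $\iota^*$.

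For the ``if'' direction, assume $\iota^*\mathsf{Per}(\D_\hbar)=-\frac{1}{2}c_1(K_Y)$. Restricting $\D_\hbar$ to $X^\wedge_Y$ gives a formal quantization whose period agrees with that of $\D^{can}_\hbar$ on the formal neighborhood of the zero section. Since the vanishing $H^i(Y,\Str_Y)=0$ for $i=1,2$ propagates to the formal neighborhood via the retraction to $Y$, the classification result underlying Lemma \ref{Lem:gr_form_quant_classif} yields an isomorphism $\D_\hbar|_{X^\wedge_Y}\cong \D^{can}_\hbar$ on formal neighborhoods, and transporting $\Str_Y$ across this isomorphism produces the desired $\M_\hbar$. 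Conversely, if $\M_\hbar$ exists, then local cyclic generators (which exist since $\M_\hbar/\hbar\M_\hbar=\Str_Y$ is locally cyclic over $\Str_X$ near $Y$) trivialize $\D_\hbar$ locally as $\D^{can}_\hbar$, forcing $\iota^*\mathsf{Per}(\D_\hbar)=-\frac{1}{2}c_1(K_Y)$.

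The main obstacle is the explicit period computation for $\D^{can}_\hbar$: the factor $-\frac{1}{2}c_1(K_Y)$ arises from the symmetric (Moyal-Weyl) normalization implicit in the definition of $\mathsf{Per}$, a phenomenon parallel to the shift by $-\frac{1}{2}\operatorname{tr}(\xi)$ between $\Phi$ and $\Phi^{sym}$ that produces \eqref{eq:period_quantization}. Verifying this carefully, together with checking that the formal Weinstein identification transports periods correctly, constitutes the technical content of \cite{BGKP}, which we invoke as a black box for Proposition \ref{Prop:BGKP}.
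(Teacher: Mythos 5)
The paper offers no proof of this proposition at all; it is stated verbatim as ``a special case of \cite[Theorem 1.1.4]{BGKP}'' and nothing more. Your proposal also, in the end, invokes \cite{BGKP} as a black box, so the logical content of your argument is the same citation the paper makes; the difference is that you interleave a conceptual sketch of how that theorem is proved (formal Weinstein identification of $X^\wedge_Y$ with the zero section in $T^*Y$, comparison with the canonical quantization by differential operators, and period matching). That sketch is a reasonable road map, but be aware it glosses over nontrivial points you defer to \cite{BGKP}: the period computation $\mathsf{Per}(\D^{can}_\hbar)=-\tfrac{1}{2}c_1(K_Y)$ is asserted rather than derived; Lemma \ref{Lem:gr_form_quant_classif} in this paper classifies \emph{graded} formal quantizations on a variety, whereas your argument needs the full ungraded Bezrukavnikov--Kaledin classification on a \emph{formal} neighborhood, an extension that is not automatic; and the ``only if'' direction is compressed to a single sentence about local cyclic generators. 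None of this constitutes an error given your explicit disclaimer, but it is worth noting that, as written, the proposal does not independently prove the proposition any more than the paper's bare citation does.
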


Note that this  is true over any characteristic field $0$ field, which is already the generality
of \cite{BGKP}. %Second, note that
%the group in (2) is pro-unipotent.
%So if there is an action of a reductive group $G$
%on $X$ that preserves $Y$ and lifts to an action on $\D_\hbar$ such that $\hbar$
%is rescaled  by a character, there is a quantization of $\Str_Y$ with a compatible
%$G$-action.

We will be interested in microlocal filtered quantizations of $\Str_Y$. Let $\D$ be a microlocal
filtered quantization of $X$. Let $\mathcal{M}$ be a coherent sheaf of
$\D$-modules. By a {\it good filtration} on $\mathcal{M}$ we mean a $\D$-module filtration
$\mathcal{M}=\bigcup_{i\in \Z} \mathcal{M}_{\leqslant i}$ by sheaves of vector spaces subject to
the following two conditions:
\begin{itemize}
\item The filtration is complete and separated.
\item The associated graded $\Str_X$-module $\gr\mathcal{M}$ is coherent.
\end{itemize}

For example, if $\M_\hbar$ is a coherent $\D_\hbar$-module that comes with a $\C^\times$-action
compatible with that on $\D_\hbar$, then we can produce a coherent $\D$-module $\M:=\M_{\hbar,fin}/(\hbar-1)\M_{\hbar,fin}$. This module comes with a natural filtration, which is good.

A good filtration is far from being unique. However, any two good filtrations $(\M_{\leqslant i})_{i\in \Z}$ and $
(\M_{\preceq i})_{i\in \Z}$ of $\M$ are compatible in the following sense.

\begin{Lem}\label{Lem:good_filt_compat}
There are integers $d_1,d_2$ such that $\M_{\leqslant i-d_1}\subset \M_{\preceq i}\subset \M_{\leqslant i+d_2}$
for all $i$.
\end{Lem}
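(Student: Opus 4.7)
The statement is a filtered sheaf-theoretic analogue of the classical algebraic fact that any two good filtrations on a finitely generated module over a filtered ring are equivalent up to a shift. My plan is to establish it locally on conical affine opens of $X$ and then globalize using quasi-compactness.

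First, I fix a conical affine open $U\subset X$ and work with the filtered algebra $\D(U)$ acting on the module $\M(U)$. Because the associated graded of a good filtration is coherent over $\gr\D=\Str_X$ and $X$ is Noetherian, $\gr\M(U)$ is a finitely generated $\Str_X(U)$-module. Choosing homogeneous generators of $\gr\M(U)$ and lifting them, I obtain sections $m_1,\ldots,m_k\in\M(U)$ with $m_j\in\M(U)_{\leqslant n_j}$ satisfying $\M(U)_{\leqslant i}=\sum_j \D(U)_{\leqslant i-n_j}m_j$ for all $i\in\Z$; this is the standard characterization of a good filtration on a finitely generated module, with completeness and separatedness guaranteeing that the partial sums really exhaust $\M(U)_{\leqslant i}$.

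Next, for each such generator $m_j$, exhaustiveness of the second filtration forces $m_j\in\M(U)_{\preceq N_j}$ for some integer $N_j$. Combined with $\D(U)_{\leqslant \ell}\cdot\M(U)_{\preceq r}\subset\M(U)_{\preceq \ell+r}$, this yields
\[
\M(U)_{\leqslant i}\subset \M(U)_{\preceq i+d_U},\qquad d_U:=\max_j(N_j-n_j),
\]
for every $i\in\Z$. A symmetric argument, with generators adapted to $(\M_{\preceq\bullet})$, gives the reverse containment $\M(U)_{\preceq i}\subset\M(U)_{\leqslant i+d_U'}$ for a second constant $d_U'$ depending only on $U$. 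I then cover $X$ by finitely many conical affine opens $U_1,\ldots,U_r$, which is possible because $X$ is Noetherian, and set $d_1:=\max_\alpha d_{U_\alpha}'$ and $d_2:=\max_\alpha d_{U_\alpha}$ to obtain the desired global containment of sheaves.

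The step I expect to require the most care is the reduction to the algebraic comparison: namely, checking that on a conical affine open $U$ a good filtration in the sheaf-theoretic sense really does give rise to a good filtration on $\M(U)$ in the sense of being generated in bounded degrees over $\D(U)$. Granted this — and it is exactly what coherence of $\gr\M$ together with completeness and separatedness of the filtration are designed to provide — the remainder of the argument is routine bookkeeping.
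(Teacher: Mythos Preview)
Your proposal is correct and follows exactly the route the paper indicates: the paper's proof is the single sentence ``This can be checked locally, where we deal with good filtrations on modules over algebras. In this case, the claim is classical,'' and you have simply spelled out that classical argument together with the passage from local to global via a finite conical affine cover. There is nothing to add.
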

\begin{proof}
This can be checked locally, where we deal with good filtrations on modules over algebras.
In this case, the claim is classical.
\end{proof}

We have the following corollary of Proposition \ref{Prop:BGKP}.

\begin{Cor}\label{Cor:lag_quantization}
We assume that $X$ comes with a $T\times \C^\times$-action, where a torus $T$ preserves
$\omega$, and $\C^\times$ rescales $\omega$ as before. Let $\D$ be a microlocal
filtered quantization of $X$ with an action of a torus $T$ by filtered algebra automorphisms. Assume that $Y$
is $T$-stable and there is a formal quantization of $Y$. Then there is a filtered
quantization of $\Str_Y$ to a coherent $\D$-module $\M$ that carries an action
of $T$ that lifts the $T$-action on $\Str_Y$ and is compatible with the action of $T$ on $\D$.
\end{Cor}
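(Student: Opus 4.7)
The plan is to exploit the given formal quantization $\M_\hbar$ of $\Str_Y$ and enhance it to a $T \times \C^\times$-equivariant $\D_\hbar$-module, where $\D_\hbar$ is the formal Rees version of $\D$. Then the filtered quantization $\M$ demanded by the corollary will be recovered by the familiar process of taking $\C^\times$-finite sections and reducing modulo $\hbar - 1$, exactly as $\D$ is recovered from $\D_\hbar$ in Section \ref{SS_quant_general}.

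First I would pass from $\D$ to its $\hbar$-adically completed Rees sheaf $\D_\hbar$. The $T$-action on $\D$ by filtered automorphisms and the grading coming from the filtration together make $\D_\hbar$ a $T \times \C^\times$-equivariant formal quantization of $X$, with the extra $\C^\times$ rescaling $\hbar$. The hypothesis that $\Str_Y$ admits a formal quantization supplies a coherent $\D_\hbar$-module $\M_\hbar$, flat over $\C[[\hbar]]$, together with an identification $\M_\hbar / \hbar \M_\hbar \cong \Str_Y$ of $\Str_X$-modules.

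Next I would promote $\M_\hbar$ to a $T \times \C^\times$-equivariant object. Since $Y$ is $T$-stable and $g^* \D_\hbar \cong \D_\hbar$ equivariantly for every $g \in T \times \C^\times$, the pulled-back module $g^* \M_\hbar$ is again a formal quantization of $\Str_Y$. The uniqueness of such a quantization up to isomorphism — which ultimately rests on $H^1(Y, \Str_Y) = 0$, an assumption that is implicit already in Proposition \ref{Prop:BGKP} — yields isomorphisms $\phi_g \colon g^* \M_\hbar \xrightarrow{\sim} \M_\hbar$. The failure of $\{\phi_g\}$ to form an equivariant structure is a cocycle taking values in $\operatorname{Aut}(\M_\hbar)$, whose infinitesimal version is governed by $H^1(Y, \Str_Y)$. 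Since $T \times \C^\times$ is a torus, rational cohomology in positive degree vanishes, so a standard averaging / cocycle-trivialization argument allows us to modify the $\phi_g$ so that they satisfy the cocycle condition, producing a genuine $T \times \C^\times$-equivariant structure on $\M_\hbar$ compatible with the one on $\D_\hbar$.

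Finally I would set $\M := (\M_\hbar)_{\mathrm{fin}}/(\hbar - 1)(\M_\hbar)_{\mathrm{fin}}$, where the subscript denotes the subsheaf of $\C^\times$-finite local sections. This is a coherent sheaf of $\D$-modules in the conical topology, the residual $T$-action is compatible with the $T$-action on $\D$ by construction, and the grading on $(\M_\hbar)_{\mathrm{fin}}$ descends to a complete separated filtration on $\M$ whose associated graded is identified with $\Str_Y$; in particular the filtration is good. The main obstacle is the second step: manufacturing an honest equivariant structure rather than just a family of pairwise isomorphisms. The key leverage is the vanishing of rational positive-degree cohomology of $T \times \C^\times$ combined with $H^1(Y, \Str_Y) = 0$, which together trivialize the obstruction cocycle.
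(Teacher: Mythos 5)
There is a genuine gap in your second step. You assert that the pull\-backs $g^*\M_\hbar$ are all isomorphic to $\M_\hbar$, and justify this by ``the uniqueness of such a quantization up to isomorphism,'' supposedly resting on $H^1(Y,\Str_Y)=0$. This is false: by part (2) of Theorem 1.1.4 of \cite{BGKP}, the set $\mathsf{Quant}(Y,\D_\hbar)$ of isomorphism classes of formal quantizations of $\Str_Y$ is a torsor over the group $\mathfrak{F}$ of flat $\Str_Y[[\hbar]]^\times$-torsors that are trivial modulo $\hbar$. This group is controlled by first de Rham cohomology of $Y$, not by $H^1(Y,\Str_Y)$, and it need not vanish even when $H^1(Y,\Str_Y)=0$. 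Without that vanishing you have no right to an isomorphism $\phi_g\colon g^*\M_\hbar\xrightarrow{\sim}\M_\hbar$ at all: the existence of such isomorphisms for all $g$ is precisely the assertion that $[\M_\hbar]$ is a fixed point of the $T\times\C^\times$-action on the torsor $\mathsf{Quant}(Y,\D_\hbar)$, which is what the corollary needs one to establish, not assume.

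The paper's argument works one level up from where yours does. It reduces the existence of an equivariant structure to the existence of a fixed point in $\mathsf{Quant}(Y,\D_\hbar)$ (this reduction is the reference to \cite[Section 2.3]{quant_iso}), and then considers the difference cocycle $t\mapsto\phi(t)\in\mathfrak{F}$ defined by $\phi(t)\cdot\delta=t.\delta$ for a chosen $\delta\in\mathsf{Quant}(Y,\D_\hbar)$. Because $\mathfrak{F}=\varprojlim\mathfrak{F}_k$ with each $\mathfrak{F}_k$ a rational representation of the reductive group $T\times\C^\times$, this $1$-cocycle is a coboundary, giving a fixed point $\delta'$ (generally different from the original class $[\M_\hbar]$). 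You also conflate this group cohomology of $T\times\C^\times$ with the sheaf cohomology $H^1(Y,\Str_Y)$: when you say the obstruction cocycle's ``infinitesimal version is governed by $H^1(Y,\Str_Y)$,'' that is not the relevant cohomology. The relevant obstruction at each stage lives in the rational cohomology of the reductive group with coefficients in a rational representation, and it is the reductivity of $T\times\C^\times$ that kills it, not any sheaf-cohomological vanishing on $Y$. Your framing of the obstruction as a cocycle in $\operatorname{Aut}(\M_\hbar)$ only becomes relevant after one already has a fixed point in the torsor, which is the step you skipped.
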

\begin{proof}
Similarly to \cite[Section 2.3]{quant_iso}, one shows that the existence of a $T\times \C^\times$-action
on $\M_\hbar$ with required properties is equivalent to the claim that the isomorphism
class of the formal quantization $\M_\hbar$ is fixed by $T\times \C^\times$. Let $\mathsf{Quant}(Y,\D_\hbar)$
denote the set of isomorphism classes of formal quantizations of $\Str_Y$.

According to (2) of \cite[Theorem 1.1.4]{BGKP}, $\mathsf{Quant}(Y,\D_\hbar)$ is a torsor over the
group $\mathfrak{F}$ of $\Str_Y[[\hbar]]^\times$-torsors with a flat connection that are isomorphic
to the principal $\mathbb{G}_m$-bundle associated to $\Str_Y$
modulo $\hbar$. Note that $\mathfrak{F}$ is a vector space that is realized
as $\varprojlim_{k\rightarrow \infty} \mathfrak{F}_k$, where $\mathfrak{F}_k$ is the similarly defined
space of flat $(\Str_Y[[\hbar]]/(\hbar^k))^\times$-torsors. Each of the spaces $\mathfrak{F}_k$
comes with a rational action of $T\times \C^\times$. Moreover, for any element $\delta\in \mathsf{Quant}(Y,\D_\hbar)$,
the map $t\mapsto \phi_k(t): T\times \C^\times \rightarrow \mathfrak{F}_k$ defined by $\phi_k(t)\delta=t.\delta$
is algebraic. It follows that the cocycle $t\mapsto\phi(t)$ is actually a coboundary. Equivalently,
there is a $T\times \C^\times$-fixed point in $\mathsf{Quant}(Y,\D_\hbar)$.
%Also, for $k>0$,
%let $\mathsf{Quant}_k(Y,\D_\hbar)$ denote the set of isomorphism classes of $\C[\hbar]/(\hbar^k)$-flat
%deformations of $\Str_Y$ to a $\D_\hbar/(\hbar^k)$-module. So we have natural (in particular, $T$-equivariant)
%maps $\mathsf{Quant}_{k+1}(Y,\D_\hbar)\rightarrow \mathsf{Quant}_{k}(Y,\D_\hbar)$, and
%$\mathsf{Quant}(Y,\D_\hbar)=\varprojlim_{k} \mathsf{Quant}_{k}(Y,\D_\hbar)$.

%To give an element of $\mathsf{Quant}(Y,\D_\hbar)$ is the same as to give a lift of a suitable Harish-Chandra
%torsor under a central extension of Harish-Chandra pairs, where we extend by
%the Harish-Chandra pair $(1+\hbar \C[[\hbar]],\hbar\C[[\hbar]])$, see \cite[Lemma 6.1.1]{BGKP}.
%This together with \cite[Proposition 2.7]{BK_quant} imply  $\mathsf{Quant}(Y,\D_\hbar)$ has a natural
%structure of an affine space, denote this space by $\mathfrak{A}$.
%This affine space comes as the inverse limit of affine spaces $\mathfrak{A}_k$ that arise
%in the same way as $\mathfrak{A}$, but where the Harish-Chandra pairs giving $\mathfrak{A}$
%are truncated by $\hbar^k$. By \cite[Proposition 2.7]{BK_quant}, the associated vector space
%of $\mathfrak{A}_k$ is $H^2(Y, \C)\otimes (\C[\hbar]/(\hbar^{k-1}))$, which is a finite dimensional
%space. So $\mathfrak{A}_k$ is a finite dimensional affine space with algebraic affine action of
%$T\times \C^\times$. Since $\mathfrak{A}=\varprojlim_{k\rightarrow \infty} \mathfrak{A}_k$,
%the claim that $\mathfrak{A}$ has a $T\times \C^\times$-fixed point is now standard.
\end{proof}

\section{Quantization of the nested Hilbert scheme}\label{S_quant_nested}
Recall that we write $H_{n-1,\diamondsuit}$ for $H_{n-1}\times \mathbb{A}^2$.
In Section \ref{SS_quant_general} we have constructed the filtered microlocal
quantizations of $H_{n},H_{n-1}$ to be denoted by $\D_n,\D_{n-1}$, both have period
$-\frac{1}{2}$. We also write $\D_{n-1,\diamondsuit}$ for $\D_{n-1}\widehat{\otimes}\Weyl(V_2)$,
this is a filtered microlocal quantization of $H_{n-1,\diamondsuit}$.

In this section we study a quantization of the $\Str_{n}$-$\Str_{n-1,\diamondsuit}$-bimodule
$\Str_{n,n-1}$ to a filtered coherent $\D_{n-1,\diamondsuit}$-$\D_n$-bimodule to be denoted by $\D_{n,n-1}$.

In Section \ref{SS_quant_0}, we  consider the quantizations over $\C$ and over $\Q$ using
results recalled in Section \ref{SS_quant_lagrang}. We will see that the filtered $\Gamma(\D_{n-1,\diamondsuit})$-$\Gamma(\D_n)$-bimodule of
global sections is equal to $\A_{n-1,\diamondsuit}$ and also show how to recover
$\D_{n,n-1}$ from its global sections.

In Section \ref{SS_quant_R},  we will produce a form of $\D_{n,n-1}$ over a finite localization
$\Ring$ of $\Z$ and show that it is still a microlocal filtered quantization of $\Str^\Ring_{n,n-1}$.

In Section \ref{SS_quant_p}, we reduce the $\Ring$-form from Section \ref{SS_quant_R}
mod $p$ for $p\gg 0$. We pass from the resulting microlocal quantization
$\D^\F_{n,n-1}$ to a ``Frobenius constant'' quantization $\underline{\D}_{n,n-1}^\F$
that turns out to be a splitting bundle for  the  Azumaya algebra $\left(\underline{\D}^{\F,opp}_{n}
\otimes \underline{\D}^{\F}_{n-1,\diamondsuit}\right)|_{H_{n,n-1}^{\F(1)}}.$

\subsection{Quantization in characteristic $0$}\label{SS_quant_0}
Let the base field be $\C$.
From (\ref{eq:period_opposite}) and  $\mathsf{Per}(\D_{n-1})=-\frac{1}{2}$, we get $\mathsf{Per}(\D_{n-1}^{\diamondsuit,opp})=\frac{1}{2}$. By Lemma \ref{Lem:nested_canonical},
for $n>2$,  we have
$$\iota^*(\mathsf{Per}(\D_n^{opp}\widehat{\otimes} \D_{n-1,\diamondsuit}))=-\frac{1}{2}c_1(K_Y),$$
where we write $\iota$ for the inclusion $H_{n,n-1}\hookrightarrow H_{n}\times H_{n-1,\diamondsuit}$.
It is lagrangian by Lemma \ref{Lem:lagrangian}.

The following lemma establishes the remaining assumption from Section \ref{SS_quant_lagrang}.

\begin{Lem}\label{Lem:nested_cohom_vanish}
We have $H^i(H_{n,n-1},\Str)=0$ for $i>0$ and $\C[H_{n,n-1}]=\C[V_n]^{S_{n-1}}$.
\end{Lem}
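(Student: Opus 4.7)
The plan is to derive both assertions from results already recorded. The equality $\C[H_{n,n-1}]=\C[V_n]^{S_{n-1}}$ is exactly the first statement of Lemma~\ref{Lem:nested_global}, so nothing needs to be done there. For the cohomology vanishing, I would use the second projection $\beta_n\colon H_{n,n-1}\to H_{n-1,\diamondsuit}$.

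First I would verify that $\beta_n$ is projective and birational. Projectivity follows from the closed embedding $\alpha_n\times\beta_n\colon H_{n,n-1}\hookrightarrow H_n\times H_{n-1,\diamondsuit}$ combined with the projectivity of $H_n\to V_n/S_n$. For birationality, given a pair $(J',p)\in H_{n-1,\diamondsuit}$ with $p\notin\operatorname{Supp}(\C[x,y]/J')$, the only ideal $J\subset J'$ of codimension $n$ with $\operatorname{Supp}(J'/J)=\{p\}$ is $J=J'\cap\mathfrak m_p$; hence $\beta_n$ is an isomorphism over the dense open locus of such pairs. Since both source and target are smooth of dimension $2n$ (by Proposition~\ref{Prop:two_step_smooth} and the product structure of $H_{n-1,\diamondsuit}=H_{n-1}\times\C^2$), this set-theoretic statement suffices.

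The key input is then the classical fact that in characteristic zero a smooth variety has rational singularities, so any projective birational morphism $f\colon X\to Y$ with $X$ smooth and $Y$ smooth satisfies $Rf_*\Str_X=\Str_Y$ (a consequence of Grauert--Riemenschneider and duality, or the Kempf criterion for a rational resolution). Applying this to $\beta_n$ gives
\[
R\beta_{n*}\Str_{H_{n,n-1}}=\Str_{H_{n-1,\diamondsuit}}.
\]

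Finally, the Leray spectral sequence collapses to give $H^i(H_{n,n-1},\Str)\cong H^i(H_{n-1,\diamondsuit},\Str)$, and the right-hand side vanishes for $i>0$ by the K\"unneth formula together with Lemma~\ref{Lem:Hilbert_cohom_vanishing} (applied to $H_{n-1}$) and the triviality of higher cohomology of $\Str_{\C^2}$. I do not expect any serious obstacle; the only mild point is justifying $R\beta_{n*}\Str=\Str$, but this is a standard consequence of resolving a smooth (hence rational-singularity) target.
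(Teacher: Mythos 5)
Your proof is correct, and it takes a route that is closely related to but not identical with the paper's. The paper pushes forward along $\rho_{n,n-1}\colon H_{n,n-1}\to V_n/S_{n-1}$, a projective birational morphism to an \emph{affine} variety with rational singularities (a finite-group quotient of a smooth affine variety), so that $R\rho_{n,n-1*}\Str_{n,n-1}=\Str_{V_n/S_{n-1}}$ gives both the cohomology vanishing (target affine) and the ring equality in one stroke. You instead push forward along the intermediate map $\beta_n\colon H_{n,n-1}\to H_{n-1,\diamondsuit}$, use that a smooth target trivially has rational singularities to get $R\beta_{n*}\Str_{n,n-1}=\Str_{n-1,\diamondsuit}$, and then rely on K\"unneth plus Lemma~\ref{Lem:Hilbert_cohom_vanishing} to finish. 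Both arguments rest on the same underlying principle (rational singularities of the image of a birational projective map), but yours splits $\rho_{n,n-1}=\rho_{n-1,\diamondsuit}\circ\beta_n$ into two steps and imports the cohomology vanishing for $H_{n-1}$ rather than invoking rational singularities of the symmetric quotient directly. One small remark on your projectivity claim for $\beta_n$: as stated it is slightly terse, since the naive projection $H_n\times H_{n-1,\diamondsuit}\to H_{n-1,\diamondsuit}$ is not proper; what makes the argument work is that $H_{n,n-1}$ sits as a closed subscheme of the fiber product $H_n\times_{V_n/S_n}H_{n-1,\diamondsuit}$ (using Lemma~\ref{Lem:nested_global}), which is projective over $H_{n-1,\diamondsuit}$ by base change of $\rho_n$. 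With that expansion the proof is complete.
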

\begin{proof}
Recall the morphism $\rho_{n,n-1}:H_{n,n-1}\rightarrow V_n/S_{n-1}$, see Lemma \ref{Lem:nested_global}.
It is a birational proojective morphism. The variety $V_n/S_{n-1}$ has rational singularities.
It follows that $\rho_{n,n-1*}\Str_{n,n-1}=\Str_{V_n/S_{n-1}}$.
\end{proof}

Now using Corollary \ref{Cor:lag_quantization}  we see that there is a
filtered coherent $\D_n^{opp}\widehat{\otimes} \D_{n-1,\diamondsuit}$-module $\D_{n,n-1}$ with
a good filtration satisfying $\gr\D_{n,n-1}=\Str_{n,n-1}$. We can also assume that the
$T_h$-action lifts from $\Str_{n,n-1}$ to $\D_{n,n-1}$. Thanks to Lemma
\ref{Lem:nested_cohom_vanish}, we have a $T_h$-equivariant filtered isomorphism
$\gr\Gamma(\D_{n,n-1})=\C[V_n]^{S_{n-1}}$.

\begin{Prop}\label{Prop:glob_sections_nested}
We have an isomorphism $\Gamma(\D_{n,n-1})\cong \A_{n-1,\diamondsuit}$ of filtered $\A_{n-1,\diamondsuit}$-$\A_n$- bimodules.
\end{Prop}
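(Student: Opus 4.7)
The proof proceeds in three steps. First, I use the cohomology vanishing of Lemma~\ref{Lem:nested_cohom_vanish} to propagate from $\gr \D_{n,n-1} = \Str_{n,n-1}$ to global sections, obtaining $H^{>0}(\D_{n,n-1}) = 0$ and a filtered identification $\gr \Gamma(\D_{n,n-1}) \cong \C[V_n]^{S_{n-1}}$ of bimodules over $(\gr \A_{n-1,\diamondsuit}, \gr \A_n)$. Second, a cyclic generator produces a filtered left-$\A_{n-1,\diamondsuit}$-module isomorphism $\varphi: \A_{n-1,\diamondsuit} \xrightarrow{\sim} \Gamma(\D_{n,n-1})$. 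Third, I verify $\varphi$ also intertwines the right $\A_n$-actions by reducing to a dense open where the statement becomes classical.

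\textbf{Vanishing and the left module.} The filtration-plus-devissage argument is standard: apply $\Gamma$ to the short exact sequences $0 \to F_{i-1}\D_{n,n-1} \to F_i \D_{n,n-1} \to \gr_i \D_{n,n-1} \to 0$, use $H^{>0}(\gr_i \D_{n,n-1}) = 0$ (which follows from $H^{>0}(\Str_{n,n-1}) = 0$ since each $\gr_i$ is a subsheaf of $\Str_{n,n-1}$), and pass to the inverse limit using completeness and separatedness of the filtration. This gives $H^{>0}(\D_{n,n-1}) = 0$ and $\gr \Gamma(\D_{n,n-1}) = \C[V_n]^{S_{n-1}}$ with its natural bimodule structure. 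As $\gr_0 \C[V_n]^{S_{n-1}} = \C$, we have $\Gamma(\D_{n,n-1})_{\leq 0} = \C$; fix $s$ in it with symbol $1$. The map $\varphi(a) := a \cdot s$ is filtered and left $\A_{n-1,\diamondsuit}$-linear with $\gr \varphi = \mathrm{id}_{\C[V_n]^{S_{n-1}}}$, hence a filtered isomorphism.

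\textbf{Right-equivariance and the main obstacle.} Cyclicity of $\Gamma(\D_{n,n-1})$ over $\A_{n-1,\diamondsuit}$ with generator $s$ yields a unique filtered algebra homomorphism $\psi: \A_n \to \A_{n-1,\diamondsuit}$ with $s \cdot b = \psi(b) \cdot s$, and $\gr \psi$ is the natural inclusion $\C[V_n]^{S_n} \hookrightarrow \C[V_n]^{S_{n-1}}$. The proposition reduces to showing that $\psi$ equals the natural inclusion $\iota: \A_n \hookrightarrow \A_{n-1,\diamondsuit}$ at the filtered level. For this I restrict everything to the open $U := \rho_{n,n-1}^{-1}(V_n^0/S_{n-1}) \subset H_{n,n-1}$: Lemma~\ref{Lem:nested_global} identifies $U$ with $V_n^0/S_{n-1}$, and $V_n \setminus V_n^0$ has codimension $2$. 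On $V_n^0$ the action of $S_n$ is free, so $\alpha_n|_U$ becomes the \'etale cover $V_n^0/S_{n-1} \to V_n^0/S_n$ and $\beta_n|_U$ is the identity. By Lemma~\ref{Lem:gr_form_quant_classif}, the restrictions $\D_n|_{V_n^0/S_n}$ and $\D_{n-1,\diamondsuit}|_{V_n^0/S_{n-1}}$ are the essentially unique natural quantizations of these smooth affine open sets, and (this is the technical heart) $\D_{n,n-1}|_U$ is canonically identified with the graph bimodule of the \'etale map; on it the equality $\psi|_U = \iota|_U$ is tautological. Finally, injectivity of the restriction $\A_{n-1,\diamondsuit} \to \Gamma(\D_{n-1,\diamondsuit}|_U)$ --- which holds on associated gradeds by normality of $\C[V_n]^{S_{n-1}}$ and codimension $\geq 2$ of the complement --- promotes $\psi|_U = \iota|_U$ to $\psi = \iota$ globally. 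The subtle step is the local identification with the graph bimodule; it relies on the cohomological uniqueness from Lemma~\ref{Lem:gr_form_quant_classif} together with the construction of $\D_{n,n-1}$ via Corollary~\ref{Cor:lag_quantization}, which on the generic locus introduces no noncommutative ambiguity beyond the classical graph structure.
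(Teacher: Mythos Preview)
Your overall strategy is reasonable, but the step you yourself flag as ``the technical heart''---identifying $\D_{n,n-1}|_U$ with the graph bimodule---is not justified. Lemma~\ref{Lem:gr_form_quant_classif} classifies quantizations of the ambient symplectic variety, not lagrangian modules; the latter, by \cite{BGKP}, form a torsor over flat line bundles on $U$ (modulo $\hbar$), and you give no reason this torsor is trivial on $V_n^0/S_{n-1}$. Even granting an abstract bimodule isomorphism on $U$, you would still need to know that your specific generator $s$ restricts to a central unit there, which is precisely what is at stake. So the argument as written is circular at its key point.

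The paper's proof avoids this entirely, and you are missing the trick that makes it short: use the $T_h$-equivariant structure on $\D_{n,n-1}$ built in via Corollary~\ref{Cor:lag_quantization}. The generator $s$ in filtration degree $0$ is then $T_h$-invariant. Since $\A_n$ is generated by $\C[\underline{x}]^{S_n}$ and $\C[\underline{y}]^{S_n}$ (Wallach), it suffices to show $s$ commutes with each homogeneous $F\in\C[\underline{x}]^{S_n}$ of degree $m$ (and symmetrically for $\underline{y}$). Such an $F$ lies in filtration degree $m$ and $T_h$-weight $m$, so $[F,s]$ lies in filtration degree $\leqslant m-2$ and $T_h$-weight $m$. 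But in $\gr\Gamma(\D_{n,n-1})=\C[V_n]^{S_{n-1}}$ the $T_h$-weight-$m$ part sits entirely in filtration degree $\geqslant m$, so this intersection is zero and $[F,s]=0$. Then $b\mapsto b\cdot s$ is automatically a bimodule map, and the associated-graded argument you already have finishes it. This replaces your open-set restriction and uniqueness discussion by a two-line weight/filtration squeeze.
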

\begin{proof}
Note that $\A_n$ embeds into $\A_{n-1,\diamondsuit}$ so we can view
$\Gamma(\D_{n,n-1})$ as an $\A_n$-bimodule. The filtration
0 component of $\Gamma(\D_{n,n-1})$ is one-dimensional thanks to the isomorphism
$\gr \Gamma(\D_{n,n-1})\cong \C[V_n]^{S_{n-1}}$.  We start by  proving  that this
component is  centralized by $\A_n$.

Let $a$ denote a nonzero element in the filtration component of degree $0$
in $\Gamma(\D_{n,n-1})$. This element is
$T_h$-invariant. The algebra $\A_n$  is generated by its subalgebras $\C[\underline{x}]^{S_n},
\C[\underline{y}]^{S_n}$, see, e.g., \cite{Wallach}, so it is enough to show that $a$ commutes with
$\C[\underline{x}]^{S_n}$ and $\C[\underline{y}]^{S_n}$.
We will do this for the first subalgebra, the second is analogous. Let $F\in \C[\underline{x}]^{S_n}$ be a homogeneous
element of degree $m$. Then it lies in the filtration degree $m$ component of $\A_n$
and also in the degree $m$ component for the grading induced by the $T_h$-action.
It follows that $[F,a]$ lies in the filtration component $m-2$ and in the grading component
$m$. However, since $\gr \Gamma(\D_{n,n-1})=\C[V_n]^{S_{n-1}}$, we see that the intersection of
that filtration component and that  grading component is zero. So $[F,a]=0$.

Consider the map
$\zeta: \A_{n-1,\diamondsuit}\rightarrow \Gamma(\D_{n,n-1}), b\mapsto ab$. The associated
graded morphism of $\zeta$ is an isomorphism of graded modules, so $\zeta$ is an isomorphism strictly compatible with
the filtrations. Since $a$ commutes with $\A_n$, we see that $\iota$ is an isomorphism
of bimodules.
%Note that $H_{n,n-1}$ has no odd De Rham cohomology thanks to the cell decomposition coming
%from the $T$-action.
\end{proof}

Now let us recover $\D_{n,n-1}$ (without the filtration) from the bimodule $\A_{n-1,\diamondsuit}$.

\begin{Prop}\label{Prop:localization}
We have the following isomorphism of $\D_{n-1,\diamondsuit}$-$\D_n$-bimodules
\begin{equation}\label{eq:localization_iso}\left(\D_{n-1,\diamondsuit}\widehat{\otimes}\D_{n}^{opp}\right)
\otimes_{(\A_{n-1,\diamondsuit}\otimes \A_n^{opp})}\A_{n-1,\diamondsuit}\xrightarrow{\sim}\D_{n,n-1}.\end{equation}
\end{Prop}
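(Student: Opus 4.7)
The plan is to construct the morphism via adjunction and check it is an isomorphism by combining a generic check with a filtration argument.

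The isomorphism $\A_{n-1,\diamondsuit}\xrightarrow{\sim}\Gamma(\D_{n,n-1})$ of Proposition~\ref{Prop:glob_sections_nested}, together with $\Gamma(\D_{n-1,\diamondsuit}\widehat{\otimes}\D_n^{opp})=\A_{n-1,\diamondsuit}\otimes\A_n^{opp}$ (a K\"unneth-type consequence of Lemma~\ref{Lem:glob_sections} using the cohomology vanishing from Lemma~\ref{Lem:Hilbert_cohom_vanishing}), supplies by adjunction a canonical morphism of $\D_{n-1,\diamondsuit}$-$\D_n$-bimodules
$$\Psi:\bigl(\D_{n-1,\diamondsuit}\widehat{\otimes}\D_{n}^{opp}\bigr)\otimes_{\A_{n-1,\diamondsuit}\otimes \A_n^{opp}}\A_{n-1,\diamondsuit}\longrightarrow\D_{n,n-1},$$
sending $d\otimes a\mapsto d\cdot\bar a$, where $\bar a$ denotes the image of $a$ in $\Gamma(\D_{n,n-1})$.

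For surjectivity, observe that $\bar 1\in\Gamma(\D_{n,n-1})$ sits in the filtration-degree-zero subspace, which is one-dimensional by Proposition~\ref{Prop:glob_sections_nested}. Its image in $\gr\D_{n,n-1}=\Str_{n,n-1}$ generates $\Str_{n,n-1}$ as a module over $\Str_{H_n}\boxtimes\Str_{H_{n-1,\diamondsuit}}$ by Lemma~\ref{Lem:nested_cohom_vanish}. Completeness of the filtration then lifts this to generation of $\D_{n,n-1}$ over $\D_{n-1,\diamondsuit}\widehat{\otimes}\D_n^{opp}$, yielding surjectivity of $\Psi$.

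For injectivity, I first restrict to the open subset $U\subset H_n\times H_{n-1,\diamondsuit}$ lying over $V_n^0/S_n$. Over $U$ the morphism $\rho_n$ is an isomorphism and $\D_n|_U$, $\D_{n-1,\diamondsuit}|_U$ are microlocalizations of $\A_n$ and $\A_{n-1,\diamondsuit}$; similarly $\D_{n,n-1}|_U$ is the microlocalization of the $\A_{n-1,\diamondsuit}$-$\A_n$-bimodule $\A_{n-1,\diamondsuit}$, and $\Psi|_U$ is directly seen to be the tautological isomorphism. To globalize, equip the LHS with the natural filtration coming from the Rees construction. The RHS has its given good filtration with $\gr\D_{n,n-1}=\Str_{n,n-1}$, a line bundle on the smooth $2n$-dimensional variety $H_{n,n-1}$ and therefore torsion-free. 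Strictness of $\Psi$ with respect to the filtrations, combined with surjectivity and the generic identification, forces the kernel of $\Psi$ to vanish.

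The main obstacle will be controlling the associated graded of the LHS: the classical fiber product $H_n\times_{V_n/S_n}H_{n-1,\diamondsuit}$ is in general non-reduced, as the introduction already warns for the analogous $V_n\times_{V_n/S_n}H_n$, so one cannot simply identify $\gr(\mathrm{LHS})$ with $\iota_*\Str_{H_{n,n-1}}$. The crux of the proof is therefore to show that the induced filtration on the LHS is good with associated graded exactly $\Str_{n,n-1}$. This is where the construction of $\D_{n,n-1}$ as a lagrangian quantization from Corollary~\ref{Cor:lag_quantization}, together with the period matching $\iota^*\mathsf{Per}(\D_{n}^{opp}\widehat{\otimes}\D_{n-1,\diamondsuit})=-\tfrac12 c_1(K_{H_{n,n-1}})$ underlying its existence, should play an essential role.
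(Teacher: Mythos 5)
Your proposal takes a genuinely different route from the paper, and as written it has a real gap that you yourself flag in the last paragraph.

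The paper's proof is much shorter and avoids the entire filtration-and-generic-check strategy. It observes that $\Gamma$ and $\operatorname{Loc}(\bullet):=(\D_{n-1,\diamondsuit}\widehat{\otimes}\D_n^{opp})\otimes_{\A_{n-1,\diamondsuit}\otimes\A_n^{opp}}(\bullet)$ are mutually inverse \emph{equivalences of abelian categories} (abelian localization), a theorem that is available independently of Haiman's work: it follows either from the cyclotomic abelian localization result of \cite{ab_loc}, or from combining the derived equivalence of \cite{MN} with the $t$-exactness of $\Gamma$ from \cite{MN_abelian}, or from simplicity of the relevant Cherednik algebra at integral parameters \cite{BEG} plus the general criteria in \cite{BPW,BL}. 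Given abelian localization, the proposition is immediate: $\operatorname{Loc}(\Gamma(\D_{n,n-1}))\cong\D_{n,n-1}$ as a formal consequence of the counit being an isomorphism, and Proposition~\ref{Prop:glob_sections_nested} replaces $\Gamma(\D_{n,n-1})$ with $\A_{n-1,\diamondsuit}$.

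Your injectivity argument does not close as stated. Knowing that $\Psi$ is surjective and is an isomorphism over the open locus lying above $V_n^0/S_n$ shows only that $\ker\Psi$ is supported on the complement; it does not make $\ker\Psi$ vanish unless you also know that the \emph{source} has no torsion supported there. Torsion-freeness of the target $\D_{n,n-1}$ is irrelevant to that. The natural way to get such control is exactly what you identify as the crux: you would need to show that the tensor-product filtration on the LHS is good with associated graded $\Str_{n,n-1}$. But the associated graded of a (completed) tensor product only surjects onto the naive tensor product of associated gradeds, and that naive tensor product is $\Str_{H_n\times H_{n-1,\diamondsuit}}\otimes_{\C[V_n/S_n\times V_n/S_{n-1}]}\C[V_n/S_{n-1}]$, i.e., the structure sheaf of a possibly non-reduced classical fiber product. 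You acknowledge this, and it is a genuine obstruction: the period-matching from Section~\ref{SS_quant_lagrang} guarantees that \emph{some} lagrangian quantization $\D_{n,n-1}$ exists, but it says nothing about the associated graded of the localization $\operatorname{Loc}(\A_{n-1,\diamondsuit})$. You would in effect need to re-derive a form of abelian localization (or at least exactness of $\Gamma$ plus generation) to close this, which is what the paper simply cites. A minor point: you invoke Lemma~\ref{Lem:nested_cohom_vanish} to say the image of $\bar 1$ generates $\Str_{n,n-1}$, but that lemma is about cohomology vanishing; the generation claim is trivially true because $\Str_{n,n-1}$ is the direct image of the structure sheaf of a closed subvariety and is cyclic over $\Str_{H_n\times H_{n-1,\diamondsuit}}$.
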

\begin{proof}
We claim that the functors $\Gamma$ and
$$\operatorname{Loc}(\bullet):=\left(\D_{n-1,\diamondsuit}\widehat{\otimes}\D_{n}^{opp}\right)
\otimes_{(\A_{n-1,\diamondsuit}\otimes \A_n^{opp})}(\bullet)$$
are mutually inverse equivalences between the category of coherent $\D_{n-1,\diamondsuit}\widehat{\otimes}\D_{n}^{opp}$-modules and the category of
finitely generated $\A_{n-1,\diamondsuit}\otimes \A_n^{opp}$-modules. This claim is usually
called an abelian localization theorem. This and Proposition
\ref{Prop:glob_sections_nested} imply (\ref{Prop:localization}).

Our claim that  $\Gamma,\Loc$ are mutually inverse equivalences is pretty classical and
can be proved in several different ways.
For example, it is a special case of the main result \cite{GS} (which is based upon Haiman's work so we cannot use that)
or of \cite{ab_loc} (which is independent from Haiman's work). Alternatively, it is a direct corollary
of the main results of \cite{MN} and \cite{MN_abelian}. Namely, the algebra
$\A_{n-1,\diamondsuit}\otimes \A_n^{opp}$ has finite homological dimension,
so $R\Gamma$ and $L\operatorname{Loc}$ are quasi-inverse derived equivalences
by \cite[Theorem 1.1]{MN}. By \cite[Corollary 1.3, Section 8]{MN_abelian}, the functor
$\Gamma$ is exact, so is an equivalence of abelian categories. Yet alternatively,
this follows from results of \cite{BEG} (that rational Cherednik algebras with integral parameters
are simple) and the general results on the abelian localization theorem
from \cite[Section 5.3]{BPW} and \cite[Section 4.2]{BL}.
\end{proof}

Now recall, Section \ref{SS_quant_lagrang}, that $\D_{n,n-1}$ is defined over $\Q$, let us denote the corresponding
$\Q$-form by $\D^\Q_{n,n-1}$. The proof of Proposition \ref{Prop:glob_sections_nested} shows
that $\Gamma(\D_{n,n-1}^{\Q})=\A_{n-1,\diamondsuit}^\Q$, an isomorphism of $\A^\Q_{n-1,\diamondsuit}$-$\A_n^\Q$-bimodules.
Proposition \ref{Prop:localization} also holds over $\Q$ because (\ref{eq:localization_iso}) is defined over $\Q$.

\begin{Rem}\label{Rem:quant_concrete_defn}
As we have seen, the varieties $H_{n},H_{n-1,\diamondsuit}$ as well as the quantizations $\D_n,\D_{n-1,\diamondsuit}$
can be constructed via (quantum) Hamiltonian reductions. The variety $H_{n,n-1}$ also has a ``Hamiltonian reduction''
description. It would be interesting to find such a description for $\D_{n,n-1}$.
\end{Rem}

\subsection{$\Ring$-form}\label{SS_quant_R}
As was mentioned in Section \ref{SS_Hilbert_general}, the schemes $H_n,H_{n-1}$ are defined over a finite localization $\Ring$ of $\Z$. We assume that $n!$
is invertible in $\Ring$ so taking the $S_n$-invariants behaves in the usual way.
The same is true for $H_{n,n-1}$.
Further localizing finitely many elements in $\Ring$, we can assume that the $\Ring$-schemes $H^\Ring_{n},H^\Ring_{n-1},
H^\Ring_{n,n-1}$ are regular and $R\Gamma(\Str_{i}^\Ring)=\Ring[V_i]^{S_i}$ for $i=n,n-1$, $R\Gamma(\Str^\Ring_{n,n-1})=\Ring[V_n]^{S_{n-1}}$.

As was mentioned in Section \ref{SS_quant_general}, we can further assume that the
microlocal filtered quantizations
$\D_n,\D_{n-1}$ are defined over $\Ring$ and that the corresponding $\Ring$-forms
$\D_n^\Ring, \D_{n-1}^\Ring$ are filtered microlocal quantizations of $\Str_n^\Ring,
\Str_{n-1}^\Ring$. By an $\Ring$-form (in the case of $\D_n$, for example),  we mean a subsheaf
$\D_n^\Ring\subset \D_n^{\Q}$ of $\Ring$-algebras such that the inclusion $\D_n^\Ring\hookrightarrow \D_n^{\Q}$ gives
an isomorphism  of filtered sheaves of algebras $\Q\widehat{\otimes}_\Ring\D_n^\Ring
\xrightarrow{\sim} \D_n^\Q$.

Our goal in this section is to prove the following technical result.

\begin{Lem}\label{Lem:R_form_quantization}
After replacing $\Ring$ with a finite localization, there is an $\Ring$-form $\D_{n,n-1}^\Ring$ of
$\D_{n,n-1}^{\Q}$ such that the following conditions hold:
\begin{enumerate}\item $\D_{n,n-1}^\Ring$ is a filtered $\D_{n-1,\diamondsuit}^{\Ring}$-$\D_n^\Ring$-bimodule
with respect to the filtration restricted from $\D_{n,n-1}^{\Q}$,
\item the filtration on $\D_{n,n-1}^\Ring$ is good,
\item and we have an isomorphism
$\gr \D_{n,n-1}^\Ring\xrightarrow{\sim} \Str_{n,n-1}^\Ring$ of graded sheaves (in the conical
topology) of bimodules.
\item $R\Gamma(\D_{n,n-1}^\Ring)=\A_{n-1,\diamondsuit}^\Ring$, the equality of
subbimodules of $R\Gamma(\D_{n,n-1}^\Q)=\A_{n-1,\diamondsuit}^\Q$.
\end{enumerate}
\end{Lem}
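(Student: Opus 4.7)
My approach is to define $\D_{n,n-1}^\Ring$ by reproducing the localization formula of Proposition \ref{Prop:localization} over $\Ring$ and then to verify the four conditions by combining the rational statements (Propositions \ref{Prop:glob_sections_nested} and \ref{Prop:localization}) with standard spreading-out arguments that allow finitely many primes of $\Ring$ to be inverted along the way. Concretely, I would set
\begin{equation*}
\D_{n,n-1}^\Ring\;:=\;\bigl(\D_{n-1,\diamondsuit}^\Ring\widehat{\otimes}\D_n^{\Ring,opp}\bigr)\otimes_{(\A_{n-1,\diamondsuit}^\Ring\otimes \A_n^{\Ring,opp})}\A_{n-1,\diamondsuit}^\Ring,
\end{equation*}
viewed as a sheaf of $\D_{n-1,\diamondsuit}^\Ring$-$\D_n^\Ring$-bimodules on $H_{n-1,\diamondsuit}^\Ring\times H_n^\Ring$, and equip it with the filtration induced from the good filtrations on the three factors. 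Base-changing to $\Q$ and invoking Proposition \ref{Prop:localization} shows that $\D_{n,n-1}^\Ring$ is an $\Ring$-form of $\D_{n,n-1}^\Q$ compatible with filtrations; this gives (1).

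Conditions (2) and (3) I would deduce from a computation of the associated graded. There is a natural graded bimodule map
\begin{equation*}
\gr\D_{n,n-1}^\Ring\;\longrightarrow\;\Str_{n,n-1}^\Ring
\end{equation*}
arising from the symbol maps of the three constituent factors. Over $\Q$ it becomes the identification $\gr\D_{n,n-1}^\Q=\Str_{n,n-1}^\Q$, so both the kernel and cokernel of this map are coherent $\Ring$-torsion sheaves on an $\Ring$-scheme of finite type, and are thus killed by inverting finitely many primes of $\Ring$. This proves (3); completeness and separation of the filtration transfer from the factors, and coherence of $\gr\D_{n,n-1}^\Ring$ then gives (2). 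The main obstacle lies here: the associated graded of the non-flat tensor product defining $\D_{n,n-1}^\Ring$ agrees with the tensor product of the associated gradeds only up to Tor terms, and controlling these integrally (rather than merely generically) is the delicate point. Ultimately the $\Ring$-noetherian finiteness observations above dispose of it.

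For (4), I would run the convergent spectral sequence associated to the filtration on $\D_{n,n-1}^\Ring$. Thanks to (3), its $E_1$-page is computed by $R\Gamma(\Str_{n,n-1}^\Ring)$, which after possibly one further finite localization of $\Ring$ (needed to transfer Lemma \ref{Lem:nested_cohom_vanish} from $\C$ to $\Ring$) equals $\Ring[V_n]^{S_{n-1}}$ concentrated in cohomological degree zero. Hence the spectral sequence degenerates, yielding $R^i\Gamma(\D_{n,n-1}^\Ring)=0$ for $i>0$ and $\gr\Gamma(\D_{n,n-1}^\Ring)=\Ring[V_n]^{S_{n-1}}$. The construction furnishes a natural filtered bimodule map $\A_{n-1,\diamondsuit}^\Ring\to \Gamma(\D_{n,n-1}^\Ring)$; by Proposition \ref{Prop:glob_sections_nested} it becomes an isomorphism after $\otimes_\Ring\Q$ and it is already an isomorphism on associated gradeds, so it is itself an isomorphism of filtered bimodules inside $\A_{n-1,\diamondsuit}^\Q$, establishing (4).
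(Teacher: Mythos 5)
Your construction — defining $\D_{n,n-1}^\Ring$ by the localization formula over $\Ring$ and equipping it with the tensor-product filtration — is the paper's starting point (the paper calls this $\D'^\Ring_{n,n-1}$), but your proof of (1)–(3) has a genuine gap that the paper explicitly flags and then spends the bulk of the argument repairing. Proposition \ref{Prop:localization} is an isomorphism of \emph{bimodules}, not of \emph{filtered} bimodules, and good filtrations on a coherent $\D$-module are not unique: by Lemma \ref{Lem:good_filt_compat} any two only agree up to bounded shifts. So when you write that base-changing to $\Q$ and invoking Proposition \ref{Prop:localization} shows compatibility with filtrations, that step is unsupported: the tensor-product filtration on $\operatorname{Loc}(\A_{n-1,\diamondsuit})$ need not coincide — even over $\Q$ — with the filtration on $\D_{n,n-1}^\Q$ produced by the lagrangian quantization in Section \ref{SS_quant_lagrang}. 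The paper says so in its proof (``even the filtration on $\D_{n,n-1}^\Q$ induced from $\D_{n,n-1}'^\Ring$ does not need to coincide with the initial filtration on $\D_{n,n-1}^\Q$''). Consequently (1) does not follow as you state it, and your ``natural graded map $\gr\D_{n,n-1}^\Ring\to\Str_{n,n-1}^\Ring$ arising from the symbol maps'' is not established to become an isomorphism over $\Q$; indeed the natural symbol-level target is the structure sheaf of the scheme-theoretic fiber product $(H_{n-1,\diamondsuit}\times H_n)\times_{V_n/S_{n-1}\times V_n/S_n}V_n/S_{n-1}$, which need not equal $\Str_{n,n-1}$ as a scheme, and this mismatch cannot be removed by inverting finitely many primes because it lives over $\Q$ already.

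You locate ``the main obstacle'' as the Tor defect $\gr(A\otimes_B M)$ vs.\ $\gr A\otimes_{\gr B}\gr M$. That is a secondary issue. The decisive issue is the non-uniqueness of good filtrations noted above. The paper's actual fix is to keep your bimodule $\D'^\Ring_{n,n-1}$ but \emph{replace} the tensor-product filtration: pass to the completed Rees sheaves, use Lemma \ref{Lem:good_filt_compat} to embed the correct Rees sheaf $\D^\Q_{n,n-1,\hbar}$ into $\D'^\Q_{n,n-1,\hbar}$ so that it contains $\hbar^N\D'^\Q_{n,n-1,\hbar}$ for some $N$, work modulo $\hbar^N$ (where $\Q\otimes_\Ring$ commutes with everything without completion), and descend this subsheaf to a $T_c$-stable subbimodule over a finite localization of $\Ring$. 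This produces a \emph{new} good filtration on the same bimodule whose associated graded is $\Str_{n,n-1}^\Ring$ after a further finite localization, which is what (2) and (3) require. Your spectral-sequence argument for (4) is fine in spirit once (2)–(3) are in place, and matches the paper's brief ``similarly.''
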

\begin{proof}
Consider the $\D_{n-1,\diamondsuit}^{\Ring}$-$\D_n^\Ring$-bimodule
$$\D_{n,n-1}'^\Ring:=(\D^{\Ring, opp}_n\widehat{\otimes}\D_{n-1,\diamondsuit}^{\Ring})\otimes_{\A_n^{\Ring,opp}\otimes
\A_{n-1,\diamondsuit}^{\Ring}}\A_{n-1,\diamondsuit}^\Ring.$$
The bimodule $\D_{n,n-1}'^\Ring$
admits a natural homomorphism of sheaves of bimodules, say $\iota$, to $\D_{n,n-1}^{\Q}$ and comes with the tensor
product filtration, which is good.
It follows from Proposition
\ref{Prop:localization} that $\iota$ induces an isomorphism
$\Q\widehat{\otimes}_\Ring\D'^\Ring_{n,n-1}\xrightarrow{\sim} \D^\Q_{n,n-1}$ of sheaves
of bimodules. Consider $\ker\iota\subset \D_{n,n-1}'^\Ring$. We see that $\gr\ker\iota$
is a coherent sheaf on $H_n^{\Ring}\times H_{n-1,\diamondsuit}^\Ring$ that is $\Ring$-torsion.
So after replacing $\Ring$ with a finite localization, $\iota$ becomes injective. Hence
$\D'^\Ring_{n,n-1}$ is an $\Ring$-form of $\D^Q_{n,n-1}$
%From here it is easy to deduce that $\D'^\Ring_{n,n-1}$ becomes an $\Ring$-form of $\D^\Q_{n,n-1}$
%after inverting finitely many elements in $\Ring$.

Note that $\D_{n,n-1}'^{\Ring}$ also satisfies (1).
However,  it does not need to satisfy (3): even the filtration
on $\D_{n,n-1}^\Q$ induced from $\D_{n,n-1}'^\Ring$ does not need to coincide with
the initial filtration on $\D_{n,n-1}^\Q$. But both filtrations on $\D_{n,n-1}^{\Q}$ are good. We will show that
after replacing $\Ring$ with a finite localization there is a good filtration on
$\D'^\Ring_{n,n-1}$ satisfying (3). For this, consider the completed Rees sheaf $\D'^\Ring_{n,n-1,\hbar}$.
Then the completed Rees sheaf $\D'^\Q_{n,n-1,\hbar}$ is obtained as $\Q\widehat{\otimes}_\Ring
\D'^\Ring_{n,n-1,\hbar}$.

Recall, Lemma \ref{Lem:good_filt_compat},  that the good filtration on $\D_{n,n-1}^\Q$
is squeezed between appropriate shifts of the good filtration of
$\D'^\Q_{n,n-1}$.
So $\D^\Q_{n,n-1,\hbar}$ embeds into $\D'^\Q_{n,n-1,\hbar}$. The image contains
$\hbar^N \D'^{\Q}_{n,n-1,\hbar}$ for a sufficiently large integer $N$ and (after shifting the filtrations) we can assume
that the embedding is $T_c$-equivariant. Note that $\D'^\Q_{n,n-1,\hbar}/ \hbar^N
\D'^\Q_{n,n-1,\hbar}= \Q\otimes_\Ring \left(\D'^\Ring_{n,n-1,\hbar}/\hbar^N
\D'^\Ring_{n,n-1,\hbar}\right)$ (where we no longer need to complete the tensor product).
After replacing $\Ring$ with a finite localization, the subbimodule
$\D^\Q_{n,n-1,\hbar}/\hbar^N \D'^\Q_{n,n-1,\hbar}$ becomes defined over $\Ring$
and so gives rise to a $T_c$-stable subbimodule $\D^\Ring_{n,n-1,\hbar}\subset \D'^\Ring_{n,n-1,\hbar}$
containing $\hbar^N \D'^\Ring_{n,n-1,\hbar}$. From $\D^R_{n,n-1,\hbar}$ we produce
a filtered coherent $\D^{\Ring}_{n-1,\diamondsuit}$-$\D^\Ring_{n}$-bimodule $\D^\Ring_{n,n-1}$
with $\Q\widehat{\otimes}_\Ring \D^\Ring_{n,n-1}\xrightarrow{\sim} \D^\Q_{n,n-1}$.
Hence $\Q\otimes_{\Ring}\gr \D^\Ring_{n,n-1}\xrightarrow{\sim} \Str^{\Q}_{n,n-1}$.
After replacing $\Ring$ with a finite localization again, we achieve that
(3)  holds. Similarly, we achieve that (4) holds.
\end{proof}

\subsection{Quantization in characteristic $p$}\label{SS_quant_p}
Set $\F:=\overline{\F}_p$, where $\F_p$  is a quotient of $\Ring$
(so that $p$ is very large). Set $\D^\F_{n,n-1}:=\F\widehat{\otimes}_\Ring \D^\Ring_{n,n-1}$. This is a
microlocal filtered $\D^{\F}_{n-1,\diamondsuit}$-$\D_n^\F$-bimodule.

\begin{Lem}\label{Lem:microl_p_properties}
The bimodule $\D^\F_{n,n-1}$ is a microlocal filtered quantization of $\Str_{n,n-1}^\F$.
Further, we have $R\Gamma(\D^\F_{n,n-1})=\A_{n-1,\diamondsuit}^\F$, an isomorphism of
filtered bimodules.
\end{Lem}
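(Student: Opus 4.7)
The plan is to base-change from $\Ring$ to $\F$ using Lemma \ref{Lem:R_form_quantization} together with the $\F$-version of the cohomological vanishing from Lemma \ref{Lem:nested_cohom_vanish} (which, as noted at the end of Section 2.2, holds for $p\gg 0$).

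First I would verify the microlocal quantization axioms. Completeness and separatedness of the filtration on $\D^\F_{n,n-1}$ are automatic from the $\hbar$-adically completed base change $\F\widehat{\otimes}_\Ring \D^\Ring_{n,n-1}$. The nontrivial point is the identification $\gr\D^\F_{n,n-1}\cong \Str^\F_{n,n-1}$. After shrinking $\Ring$ so that $H^\Ring_{n,n-1}$ is smooth, $\gr\D^\Ring_{n,n-1}=\Str^\Ring_{n,n-1}$ is $\Ring$-flat, so each graded component base-changes correctly and
$$\gr\D^\F_{n,n-1}=\F\otimes_\Ring\gr\D^\Ring_{n,n-1}=\F\otimes_\Ring\Str^\Ring_{n,n-1}=\Str^\F_{n,n-1},$$
with the bimodule structure inherited from the $\Ring$-form.

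Next I would compute $R\Gamma$. Working with the completed Rees sheaf $\widetilde{\D}^\F_{n,n-1,\hbar}$, each finite quotient $\widetilde{\D}^\F_{n,n-1,\hbar}/\hbar^N$ carries a length-$N$ filtration whose successive quotients are $T_c$-weight components of $\Str^\F_{n,n-1}$. By the $\F$-version of Lemma \ref{Lem:nested_cohom_vanish}, each such component has vanishing higher cohomology, and the direct sum of global sections recovers a graded piece of $\F[V_n]^{S_{n-1}}$. A d\'evissage up this filtration yields $R^i\Gamma(\widetilde{\D}^\F_{n,n-1,\hbar}/\hbar^N)=0$ for $i>0$, together with the expected description of $\Gamma$. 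Passing to the inverse limit over $N$ (controlling $R^1\varprojlim$ via Mittag-Leffler, which applies since the transition maps are surjective) and then inverting $\hbar$ on the $T_c$-finite part gives $R^i\Gamma(\D^\F_{n,n-1})=0$ for $i>0$, and $\Gamma(\D^\F_{n,n-1})$ is filtered with associated graded $\F[V_n]^{S_{n-1}}$.

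Finally, I would identify $\Gamma(\D^\F_{n,n-1})$ with $\A^\F_{n-1,\diamondsuit}$ as filtered bimodules. The cohomological vanishing just established, combined with $\Ring$-flatness of $\A^\Ring_{n-1,\diamondsuit}=\Gamma(\D^\Ring_{n,n-1})$, implies that $\Gamma$ commutes with the reduction $\Ring\to\F$ (shrinking $\Ring$ once more if needed to avoid Tor-obstructions). Using property (4) of Lemma \ref{Lem:R_form_quantization} this produces
$$\Gamma(\D^\F_{n,n-1})=\F\otimes_\Ring\A^\Ring_{n-1,\diamondsuit}=\A^\F_{n-1,\diamondsuit}$$
compatibly with the $\A^\F_{n-1,\diamondsuit}$-$\A^\F_n$-bimodule structures. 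The main obstacle I expect is the careful handling of the filtration-to-cohomology spectral sequence in the $\hbar$-adically completed, conical-topology setting: one must justify interchanging $R\Gamma$ with the inverse limit defining the completion and with the passage from the Rees sheaf back to the microlocal quantization. Once the $E_1$-page vanishing is in place, convergence follows from standard arguments, and the rest of the proof is formal.
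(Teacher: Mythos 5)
Your proof is correct and follows the same strategy as the paper: base-change the $\Ring$-form provided by Lemma \ref{Lem:R_form_quantization} to $\F$, using flatness of $H^\Ring_{n,n-1}$ over $\Spec(\Ring)$ (for the statement about $\gr$) and flatness of $\A^\Ring_{n-1,\diamondsuit}$ over $\Ring$ (for commuting $R\Gamma$ with the reduction mod $p$). The paper compresses this to a one-sentence citation of Lemma \ref{Lem:R_form_quantization} plus the two flatness facts; your d\'evissage on the Rees sheaf, Mittag-Leffler argument, and Tor-obstruction check just spell out what that sentence leaves implicit.
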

\begin{proof}
This follows from Lemma \ref{Lem:R_form_quantization} because
$H_{n,n-1}^\Ring$ is flat over $\Spec(\Ring)$, while $\A_{n-1,\diamondsuit}^\Ring$
is flat over $\Ring$.
\end{proof}

Now we are going to produce a coherent sheaf $\underline{\D}^\F_{n,n-1}$ on $H_{n}^{\F(1)}\times
H_{n-1,\diamondsuit}^{\F(1)}$.  Let $\operatorname{Fr}$ denote the Frobenius morphism for
$H_n\times H_{n-1,\diamondsuit}$. Namely, consider the completed Rees
sheaf $\D_{n,n-1,\hbar}^\F$, this is a coherent $\D_{n-1,\diamondsuit,\hbar}^\F$-$\D_{n,\hbar}^\F$-bimodule
(and a sheaf in the Zariski topology on $H_{n}^\F\times H_{n-1,\diamondsuit}^\F$).
Note that $\operatorname{Fr}_*\left(\D_{n-1,\diamondsuit,\hbar}^\F\widehat{\otimes}_{\F[[\hbar]]}
\D_{n,\hbar}^{\F,opp}\right)$ is a coherent sheaf of  $\Str_{H_{n}^{\F(1)}\times H_{n-1,\diamondsuit}^{\F(1)}}[[\hbar]]$-modules.
It follows that $\operatorname{Fr}_*(\D_{n,n-1,\hbar}^\F)$ is also a coherent sheaf of
$\Str_{H_{n}^{\F(1)}\times H_{n-1,\diamondsuit}^{\F(1)}}[[\hbar]]$-modules.

Now note that, by the construction, $\operatorname{Fr}_*(\D_{n,n-1,\hbar}^\F)$  is $T_c^\F$-equivariant.
Since the action of $T_c^\F$-contracting, the functor of  $\hbar$-adic completion is
a category equivalence
$$\Coh^{T_c}(H_{n}^{\F(1)}\times
H_{n-1,\diamondsuit}^{\F(1)}\times \mathbb{A}^1)\xrightarrow{\sim}
\Coh^{T_c}(\Str_{H_{n}^{\F(1)}\times H_{n-1,\diamondsuit}^{\F(1)}}[[\hbar]]).$$
%the categories of $T_c^\F$-equivariant coherent sheaves
%over $\Str_{H_{n}^{\F(1)}\times H_{n-1,\diamondsuit}^{\F(1)}}[[\hbar]]$ and on $$ are equivalent.
Let $\underline{\D}_{n,n-1,\hbar}^\F$
denote the $T_c^\F$-equivariant coherent sheaf on $H_{n}^{\F(1)}\times
H_{n-1,\diamondsuit}^{\F(1)}\times \mathbb{A}^1$ corresponding to $\operatorname{Fr}_*(\D_{n,n-1,\hbar}^\F)$.
Set $\underline{\D}_{n,n-1}^\F:=\underline{\D}_{n,n-1,\hbar}^\F/(\hbar-1)$. This is a coherent sheaf
on $H_{n}^{\F(1)}\times H_{n-1,\diamondsuit}^{\F(1)}$. Note that its restriction to the conical
topology admits a natural embedding into $\operatorname{Fr}_*\D^\F_{n,n-1}$.

Let us record some basic properties of $\underline{\D}_{n,n-1}^\F, \underline{\D}_{n,n-1,\hbar}^\F$.

\begin{Lem}\label{Lem:underline_D_properties}
The following claims are true:
\begin{enumerate}
\item The sheaf $\underline{\D}_{n,n-1,\hbar}^\F$ is flat over $\mathbb{A}^1$ and
the specialization to $\hbar=0$ equals $\operatorname{Fr}_* \Str^\F_{n,n-1}$.
\item The sheaf $\underline{\D}_{n,n-1}^\F$ carries a natural structure of
a $\underline{\D}_{n-1,\diamondsuit}^\F$-$\D_n^\F$-bimodule.
\item We have $R\Gamma(\underline{\D}_{n-1,\diamondsuit}^\F)\cong \A_{n-1,\diamondsuit}^\F$,
an isomorphism of $\A_{n-1,\diamondsuit}^\F$-$\A_{n}^\F$-bimodules.
\end{enumerate}
\end{Lem}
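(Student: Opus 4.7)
The plan is to read off all three claims by transporting standard properties of the completed Rees sheaf $\D^\F_{n,n-1,\hbar}$ through the $T_c^\F$-equivariant $\hbar$-adic completion equivalence
\[
\Phi\colon \Coh^{T_c^\F}\bigl(H_{n}^{\F(1)}\times H_{n-1,\diamondsuit}^{\F(1)}\times \mathbb{A}^1\bigr)\xrightarrow{\sim} \Coh^{T_c^\F}\bigl(\Str_{H_{n}^{\F(1)}\times H_{n-1,\diamondsuit}^{\F(1)}}[[\hbar]]\bigr),
\]
with $\Phi^{-1}$ given by extracting $T_c^\F$-finite vectors. This $\Phi$ is exact, sends $\F[\hbar]$-flat objects to $\F[[\hbar]]$-flat ones and vice versa, is compatible with $\Str$-linear tensor products, and intertwines $R\Gamma$ (up to $\hbar$-adic completion of the output, which is harmless for the coherent sheaves at hand).

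For (1), the good filtration on $\D^\F_{n,n-1}$ supplied by Lemma \ref{Lem:microl_p_properties} makes $\D^\F_{n,n-1,\hbar}$ flat over $\F[[\hbar]]$ with $\D^\F_{n,n-1,\hbar}/\hbar = \Str^\F_{n,n-1}$. Since $\operatorname{Fr}$ is affine, $\operatorname{Fr}_*$ is exact and commutes with specialization at $\hbar = 0$, so $\operatorname{Fr}_*\D^\F_{n,n-1,\hbar}$ is $\F[[\hbar]]$-flat with fiber $\operatorname{Fr}_*\Str^\F_{n,n-1}$. Applying $\Phi^{-1}$ converts this into flatness of $\underline{\D}^\F_{n,n-1,\hbar}$ over $\mathbb{A}^1$ with the asserted $\hbar = 0$ fiber.

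For (2), the $\D^\F_{n-1,\diamondsuit,\hbar}\widehat{\otimes}_{\F[[\hbar]]}\D^{\F,opp}_{n,\hbar}$-action on $\D^\F_{n,n-1,\hbar}$ survives $\operatorname{Fr}_*$, and the analogous construction recalled in Section \ref{SS_quant_general} produces $\underline{\D}^\F_{n,\hbar}$ and $\underline{\D}^\F_{n-1,\diamondsuit,\hbar}$ from $\operatorname{Fr}_*\D^\F_{n,\hbar}$ and $\operatorname{Fr}_*\D^\F_{n-1,\diamondsuit,\hbar}$ by the same equivalence $\Phi^{-1}$. Monoidality of $\Phi^{-1}$ then turns the pushforward action into a $\underline{\D}^\F_{n-1,\diamondsuit,\hbar}\otimes_{\F[\hbar]} \underline{\D}^{\F,opp}_{n,\hbar}$-module structure on $\underline{\D}^\F_{n,n-1,\hbar}$; quotienting by $\hbar-1$ yields the bimodule structure claimed.

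For (3), Lemma \ref{Lem:microl_p_properties} gives $R\Gamma(\D^\F_{n,n-1}) \cong \A^\F_{n-1,\diamondsuit}$ as filtered bimodules, and passing to Rees sheaves (using $H^{>0}(\D^\F_{n,n-1,\hbar}) = 0$, which holds because it does already mod $\hbar$) yields $R\Gamma(\D^\F_{n,n-1,\hbar})$ equal to the completed Rees algebra of $\A^\F_{n-1,\diamondsuit}$. Affineness of $\operatorname{Fr}$ identifies this with $R\Gamma(\operatorname{Fr}_*\D^\F_{n,n-1,\hbar})$, and $\Phi^{-1}$ then extracts $R\Gamma(\underline{\D}^\F_{n,n-1,\hbar}) \cong \A^\F_{n-1,\diamondsuit}[\hbar]$ as a graded bimodule over the analogous global section rings. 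By (1), the multiplication-by-$(\hbar-1)$ sequence for $\underline{\D}^\F_{n,n-1,\hbar}$ is short exact; combined with the vanishing of $R^1\Gamma$ it specializes to $R\Gamma(\underline{\D}^\F_{n,n-1}) \cong \A^\F_{n-1,\diamondsuit}$ as $\A^\F_{n-1,\diamondsuit}$-$\A^\F_n$-bimodules. The main obstacle is the careful bookkeeping in (3): one must check that $\Phi$ intertwines $R\Gamma$ with the appropriate completion-then-finite-vectors operation and that higher cohomology vanishes uniformly in $\hbar$, so that specialization at $\hbar = 1$ commutes with $R\Gamma$. Both reductions are standard once one exploits the contracting $T_c^\F$-action.
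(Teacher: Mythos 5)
Your argument is correct and matches the paper's (very terse) proof, which simply says all three claims follow from the construction of $\underline{\D}^\F_{n,n-1,\hbar}$ via the contracting-$T_c^\F$ $\hbar$-adic completion equivalence together with Lemma \ref{Lem:microl_p_properties}; you have just spelled out the transport of flatness, bimodule structure, and cohomology through that equivalence, which is exactly what the paper is pointing to. One small remark: in (2) your argument naturally produces a $\underline{\D}^\F_{n-1,\diamondsuit}$-$\underline{\D}^\F_n$-bimodule structure rather than the $\underline{\D}^\F_{n-1,\diamondsuit}$-$\D^\F_n$-bimodule structure literally printed in the statement; this is almost certainly a typo in the paper, since Lemma \ref{Lem:splitting} immediately uses (2) to view $\underline{\D}^\F_{n,n-1}$ as a module over $\underline{\D}^{\F}_{n-1,\diamondsuit}\otimes \underline{\D}^{\F,opp}_n$, so what you proved is the version actually needed.
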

\begin{proof}
(1) follows directly from the constructions of $\underline{\D}^\F_{n,n-1,\hbar},
\underline{\D}^\F_{n,n-1}$. To prove (2), notice that the construction
that produces $\underline{\D}^\F_{n,n-1}$ from $\D^\F_{n,n-1}$ also applies to
$\D^\F_n, \D^\F_{n-1,\diamondsuit}$ and produces $\underline{\D}^\F_n,\underline{\D}^\F_{n-1,\diamondsuit}$.
(3) follows from the construction of $\underline{\D}^\F_{n,n-1}$ and the analogous
properties of $\D^\F_{n,n-1}$ in Lemma \ref{Lem:microl_p_properties}.
\end{proof}

It turns out that $\underline{\D}^\F_{n,n-1}$ is a ``Frobenius constant'' quantization of
$\Str_{n,n-1}^\F$ in the sense of the following lemma.

\begin{Lem}\label{Lem:Frob_const_bimodule}
The action of $\Str_{H_n^{\F(1)}\times H_{n-1}^{\diamondsuit\F(1)}}$
on $\underline{\D}^\F_{n,n-1}$ factors through $\Str_{n,n-1}^{\F(1)}$.
\end{Lem}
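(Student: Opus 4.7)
The plan is to pass to the Rees version $\underline{\D}^\F_{n,n-1,\hbar}$ and reduce the claim to the behavior at $\hbar=0$. By Lemma \ref{Lem:underline_D_properties}(1), the Rees sheaf is flat over $\mathbb{A}^1$ with $\hbar=0$ fiber $\operatorname{Fr}_*\Str^\F_{n,n-1}$. First I would observe that this fiber is annihilated by the ideal sheaf $I\subset\Str_{H_n^{\F(1)}\times H_{n-1,\diamondsuit}^{\F(1)}}$ of $H_{n,n-1}^{\F(1)}$: the composition $H_{n,n-1}^\F\hookrightarrow H_n^\F\times H_{n-1,\diamondsuit}^\F\xrightarrow{\operatorname{Fr}}H_n^{\F(1)}\times H_{n-1,\diamondsuit}^{\F(1)}$ factors as $H_{n,n-1}^\F\xrightarrow{\operatorname{Fr}}H_{n,n-1}^{\F(1)}\hookrightarrow H_n^{\F(1)}\times H_{n-1,\diamondsuit}^{\F(1)}$, so $\operatorname{Fr}_*\Str^\F_{n,n-1}$ is scheme-theoretically supported on $H_{n,n-1}^{\F(1)}$. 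This yields the containment $I\cdot\underline{\D}^\F_{n,n-1,\hbar}\subseteq\hbar\cdot\underline{\D}^\F_{n,n-1,\hbar}$.

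Next I would use the $T_c^\F$-equivariance of $\underline{\D}^\F_{n,n-1,\hbar}$ (with contracting $T_c^\F$-action on $H_n^{\F(1)}\times H_{n-1,\diamondsuit}^{\F(1)}$ and positive-weight action on $\mathbb{A}^1$) together with coherence to argue inductively that $I\cdot\underline{\D}^\F_{n,n-1,\hbar}\subseteq\hbar^k\cdot\underline{\D}^\F_{n,n-1,\hbar}$ for every $k\geq 1$. Working $T_c^\F$-homogeneously, for a section $f\in I$ of weight $e$ and a section $s$ of $\underline{\D}^\F_{n,n-1,\hbar}$ of weight $w$, one writes $fs=\hbar s_1$ with $s_1$ of weight $w+e-d$ (where $d>0$ is the weight of $\hbar$), and tracks how $s_1$ sits in the sheaf; iterating and exploiting local lower-boundedness of $T_c^\F$-weights on $\underline{\D}^\F_{n,n-1,\hbar}$ forces the product to lie in arbitrarily deep powers of $\hbar$. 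Stalk-wise Krull intersection on the coherent Noetherian sheaf $\underline{\D}^\F_{n,n-1,\hbar}$ (using that $\hbar$ lies in the Jacobson radical at closed points of $\{\hbar=0\}$ and that $\underline{\D}^\F_{n,n-1,\hbar}$ is $\hbar$-torsion-free by flatness) then gives $I\cdot\underline{\D}^\F_{n,n-1,\hbar}=0$, and specializing at $\hbar=1$ produces the desired $I\cdot\underline{\D}^\F_{n,n-1}=0$.

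The main technical obstacle is upgrading from the easy containment $I\cdot M\subseteq\hbar M$ to $I\cdot M\subseteq\hbar^k M$ for all $k$: a naive iteration of $I\cdot M\subseteq\hbar M$ only delivers $I^k\cdot M\subseteq\hbar^k M$, which is insufficient to conclude $I\cdot M=0$. The sharper containment requires the $T_c^\F$-weight bookkeeping above together with the Azumaya structure of $\underline{\D}^\F_{n-1,\diamondsuit}\otimes\underline{\D}^{\F,opp}_n$: at each step in the induction one rules out nontrivial extensions of $I$-annihilated modules appearing in the quotients $\underline{\D}^\F_{n,n-1,\hbar}/\hbar^k\underline{\D}^\F_{n,n-1,\hbar}$. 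This analysis fits into the general framework of \cite{BK} for Frobenius-constant quantizations of lagrangian subvarieties, and completes the proof of the lemma.
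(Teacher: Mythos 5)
Your proposal has a genuine gap, and you have honestly flagged it yourself: the passage from $I\cdot M\subseteq\hbar M$ to $I\cdot M\subseteq\hbar^k M$ for all $k$ is the entire content of the lemma and is not actually established. The weight-bookkeeping sketch does not close the gap. Writing $fs=\hbar s_1$ only records that $s_1=fs/\hbar$ has a certain $T_c^\F$-weight; there is nothing forcing $s_1$ to again lie in $\hbar M$, and iterating the containment $I\cdot M\subseteq\hbar M$ only delivers $I^k\cdot M\subseteq\hbar^k M$, exactly as you say. The additional appeal to the Azumaya structure ``ruling out nontrivial extensions'' in the successive quotients $M/\hbar^k M$ is a heuristic, not an argument, and it is not clear how to turn it into one. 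Indeed, without extra input the desired implication is false in general: a $\C[x,\hbar]$-module such as $\C[x,\hbar]/(x-\hbar)$ is flat over $\C[\hbar]$ and has $\hbar=0$ fiber killed by $x$, yet $x$ does not annihilate the whole module. So flatness plus the support of the $\hbar=0$ fiber, even combined with a $\C^\times$-equivariant structure, does not by itself control the support of the Rees module, and some geometric input specific to the situation is required.

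The paper's proof supplies exactly that missing geometric input and is considerably more direct. Rather than working with the Rees sheaf, it first observes that $\underline{\D}^\F_{n,n-1}$ embeds into $\operatorname{Fr}_*\D^\F_{n,n-1}$, so it suffices to control the support of the latter. It then restricts to the open locus $V^{0\F(1)}_n/S_n\times V^{0\F(1)}_n/S_{n-1}$, where $\operatorname{Fr}_*\D^\F_{n,n-1}$ is just the microlocalization of the global-sections bimodule $\A^\F_{n-1,\diamondsuit}$, which is visibly scheme-theoretically supported on the (partial) diagonal. Finally, since the associated graded $\operatorname{Fr}_*\Str^\F_{H_{n,n-1}}$ injects into its pushforward from that open set, so does $\operatorname{Fr}_*\D^\F_{n,n-1}$, and the global support claim follows from the local one. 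The key structural fact you are missing is precisely this injectivity into $\iota_*\iota^*$, which converts a computation on a dense open subset into the global statement; no bootstrapping in $\hbar$ is needed. If you want to salvage your Rees-module route, you would need to feed in this same open-locus information — at which point you might as well argue as the paper does.
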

\begin{proof}
Since $\underline{\D}^{\F}_{n,n-1}$ embeds into $\operatorname{Fr}_* \D_{n,n-1}^\F$,
it is enough to prove our claim for the latter sheaf.

Let $\iota$ denote the inclusion $V^{0\F(1)}/S_n\times V^{0\F(1)}/S_{n-1}
\hookrightarrow H_{n}^{\F(1)}\times H_{n-1,\diamondsuit}^{\F(1)}$. Note that
the pull-back $\iota^* \operatorname{Fr}_* \D^\F_{n,n-1}$
coincides with the microlocalization of the bimodule $\Gamma(\D_{n,n-1}^\F)=\A_{n-1,\diamondsuit}^\F$
to $V_n^{0\F(1)}/S_n\times V^{0\F(1)}/S_{n-1}$ (that can also be viewed as an open subset of
$V_n^{\F(1)}/S_{n}\times V_n^{\F(1)}/S_{n-1}$). It is easy to see that
the $\F[V_n^{\F(1)}]^{S_{n-1}}$-$\F[V_n^{\F(1)}]^{S_n}$-bimodule $\A_{n-1,\diamondsuit}^\F$
is scheme theoretically supported on the diagonal  $V_n^{\F(1)}/S_{n-1}
\hookrightarrow V_n^{\F(1)}/S_{n}\times V_n^{\F(1)}/S_{n-1}$.
Hence  $\iota^* \operatorname{Fr}_* \D^\F_{n,n-1}$
is scheme theoretically supported on $H_{n,n-1}^{\F(1)}\cap (V_n^{0\F(1)}/S_n\times
V_n^{0\F(1)}/S_{n-1})$.

Now note that
$\operatorname{Fr}_*\Str_{H_{n,n-1}^\F}\subset
\iota_*\iota^*\operatorname{Fr}_*\Str_{H_{n,n-1}^\F}.$
Since $\operatorname{Fr}_*\D^\F_{n,n-1}$ is a microlocal filtered quantization
of $\operatorname{Fr}_*\Str_{H_{n,n-1}^\F}$, it follows that
$$\operatorname{Fr}_*\D_{n,n-1}^\F\subset
\iota_*\iota^*\operatorname{Fr}_*\D_{n,n-1}^\F.$$
Since $\iota^* \operatorname{Fr}_* \D^\F_{n,n-1}$ is scheme theoretically
supported on $H_{n,n-1}^{(1)\F}\cap (V_n^{0\F(1)}/S_n\times
V_n^{0\F(1)}/S_{n-1})$, we deduce that $\operatorname{Fr}_*\D_{n,n-1}^\F$
is scheme theoretically supported on $H_{n,n-1}^{\F(1)}$, which is what we
need to prove.
\end{proof}

Here is the final property of $\underline{\D}_{n,n-1}^\F$ we need.

\begin{Lem}\label{Lem:splitting}
The sheaf $\underline{\D}^\F_{n,n-1}$ on $H_{n,n-1}^{\F(1)}$ is a splitting bundle for the Azumaya
algebra
\begin{equation}\label{eq:Azumaya}\left(\underline{\D}^{\F}_{n-1,\diamondsuit}\otimes \underline{\D}^{\F,opp}_n)\right)|_{H_{n,n-1}^{\F(1)}}.\end{equation}
\end{Lem}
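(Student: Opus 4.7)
My strategy is to prove two things: that $\underline{\D}^\F_{n,n-1}$ is a vector bundle of rank $p^{2n}$ on $H_{n,n-1}^{\F(1)}$, and that the natural $\Str_{n,n-1}^{\F(1)}$-algebra map from the Azumaya algebra (\ref{eq:Azumaya}) to $\End_{\Str_{n,n-1}^{\F(1)}}(\underline{\D}^\F_{n,n-1})$ is an isomorphism. Since (\ref{eq:Azumaya}) is an Azumaya algebra of rank $p^{4n}$ (being the tensor product of two Azumaya algebras of rank $p^{2n}$), this exactly says that $\underline{\D}^\F_{n,n-1}$ is a splitting bundle.

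For the first step, I would work with the Rees sheaf $\underline{\D}^\F_{n,n-1,\hbar}$ on $H_{n,n-1}^{\F(1)}\times \mathbb{A}^1$. By Lemma \ref{Lem:underline_D_properties}(1) it is flat over $\mathbb{A}^1$ and its $\hbar=0$ specialization is $\Fr_*\Str^\F_{n,n-1}$, a vector bundle of rank $p^{2n}$ on $H_{n,n-1}^{\F(1)}$ (since $H_{n,n-1}^\F$ is smooth of dimension $2n$ and the Frobenius is finite flat of degree $p^{2n}$). To propagate local freeness from $\hbar=0$ to $\hbar=1$, I would invoke the contracting $T_c^\F$-action: $T_c^\F$ contracts $H_{n,n-1}^{\F(1)}$ and rescales $\hbar$ with positive weight, so the formal neighborhood of $H_{n,n-1}^{\F(1)}\times \{0\}$ detects the whole $T_c^\F$-equivariant sheaf via the equivalence of categories recalled in Section \ref{SS_Procesi_constr}. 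At each $T_c^\F$-fixed point $(x_0,0)$ the sheaf $\underline{\D}^\F_{n,n-1,\hbar}$ becomes a bigraded module over a bigraded local ring whose bigraded maximal ideal $\mathfrak{m}_{x_0}+(\hbar)$ is concentrated in strictly positive bigraded degrees. Lifting a basis of the fiber at $(x_0,0)$ to $p^{2n}$ sections and applying graded Nakayama to the kernel of the induced surjection produces a free presentation of the stalk; flatness of the $\hbar=0$ reduction then forces this presentation to be an isomorphism. Specializing at $\hbar=1$ gives that $\underline{\D}^\F_{n,n-1}$ is locally free of rank $p^{2n}$ on $H_{n,n-1}^{\F(1)}$.

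For the second step, Lemma \ref{Lem:Frob_const_bimodule} guarantees that the bimodule structure yields a genuine $\Str_{n,n-1}^{\F(1)}$-algebra morphism $\phi\colon (\ref{eq:Azumaya})\to \End_{\Str_{n,n-1}^{\F(1)}}(\underline{\D}^\F_{n,n-1})$. Both sheaves are Azumaya of rank $p^{4n}$, hence locally isomorphic to $\operatorname{Mat}_{p^{2n}}$ over the structure sheaf. Fiberwise, $\phi$ becomes a $\kappa(x)$-algebra map between two copies of $\operatorname{Mat}_{p^{2n}}(\kappa(x))$; since this matrix algebra is simple, any nonzero such map is an isomorphism. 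The map is nonzero at every point because $1$ acts as the identity on the nonzero fiber $\underline{\D}^\F_{n,n-1}\otimes \kappa(x)$. Hence $\phi$ is an isomorphism on fibers, and so by Nakayama together with the rank count $\phi$ is itself an isomorphism.

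The main obstacle is the local freeness in the first step: flatness of $\underline{\D}^\F_{n,n-1,\hbar}$ over $\mathbb{A}^1$ alone does not yield local freeness over $H_{n,n-1}^{\F(1)}$, and one must genuinely exploit the contracting $T_c^\F$-action together with graded Nakayama at the (finitely many) $T_c^\F$-fixed points to transport the local freeness from the $\hbar=0$ fiber to all of $H_{n,n-1}^{\F(1)}$.
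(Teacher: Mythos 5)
Your proof is correct, and it takes a genuinely different route from the paper's. The paper's argument is a rank count: the module structure over the Azumaya algebra~(\ref{eq:Azumaya}), which has rank $p^{4n}$, forces every fiber of $\underline{\D}^\F_{n,n-1}$ to have dimension $\geqslant p^{2n}$; then the $T_c^\F$-equivariance together with upper semicontinuity of fiber dimension, applied to the Rees sheaf whose $\hbar=0$ specialization has constant rank $p^{2n}$, gives the matching upper bound. Constant rank on a reduced scheme then yields local freeness, after which the splitting property is immediate. You instead bypass the Azumaya lower bound entirely and derive local freeness directly from flatness of $\underline{\D}^\F_{n,n-1,\hbar}$ over $\mathbb{A}^1$ together with local freeness of its $\hbar=0$ reduction, via Nakayama at points of the zero locus and propagation by the contracting action. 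Your second step -- that the resulting algebra map is a fiberwise map between rank-$p^{4n}$ matrix algebras, nonzero hence an isomorphism -- is essentially the paper's implicit final step spelled out.

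Two small remarks on precision. First, your Nakayama step works at any point of $H_{n,n-1}^{\F(1)}\times\{0\}$, not only at $T_c^\F$-fixed points: for such a point the local ring is Noetherian local with $\hbar$ in the maximal ideal, flatness gives $K/\hbar K = 0$ for the kernel $K$ of the lifted free presentation, and ordinary Nakayama kills $K$; you do not need the graded version. The fixed points only enter in the propagation step, where the correct argument is that the non-locally-free locus is a closed $T_c^\F$-stable set which, if nonempty, would meet $H_{n,n-1}^{\F(1)}\times\{0\}$ (every point flows there as $t\to 0$), contradicting the freeness just established. Your appeal to ``the equivalence of categories recalled in Section \ref{SS_Procesi_constr}'' is a bit misapplied: that equivalence concerns completion along the compact zero fiber $\rho_{n,n-1}^{-1}(0)$, not along the non-proper subvariety $H_{n,n-1}^{\F(1)}\times\{0\}$. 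Second, your step 1 tacitly uses Lemma \ref{Lem:Frob_const_bimodule} (or its Rees-sheaf analogue) to know that $\underline{\D}^\F_{n,n-1,\hbar}$ is a coherent sheaf on $H_{n,n-1}^{\F(1)}\times\mathbb{A}^1$ rather than merely on $H_n^{\F(1)}\times H_{n-1,\diamondsuit}^{\F(1)}\times\mathbb{A}^1$; without this the free presentation would be over the wrong local ring. These are minor and easily repaired; the overall strategy is sound and gives a clean alternative to the rank-count in the paper.
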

\begin{proof}
As was mentioned in Section \ref{SS_quant_general}, the rank of the Azumaya algebra $\underline{\D}^{\F}_n$
is equal to $p^{2n}$. Similarly, the rank of $\underline{\D}^\F_{n-1,\diamondsuit}$
By (2) of Lemma \ref{Lem:underline_D_properties}, $\underline{\D}_{n,n-1}^\F$
is a module over (\ref{eq:Azumaya}). So the rank of every fiber of
$\underline{\D}_{n,n-1}^\F$ is $\geqslant p^{2n}$. What we need to prove is that
the rank of every fiber is $p^{2n}$. It  is enough to prove the analogous claim for
$\underline{\D}_{n,n-1,\hbar}^\F$ viewed as a coherent sheaf on $H_{n,n-1}^{\F(1)}\times
\mathbb{A}^1$. Recall that this sheaf is $T_c^\F$-equivariant and that the action is
contracting. By (1) of Lemma \ref{Lem:underline_D_properties}, the rank of
$\underline{\D}^\F_{n,n-1,\hbar}$ on $H_{n,n-1}^{\F(1)}\times \{0\}$ is $p^{2n}$.
By the semi-continuity of rank, the rank at any other point of
$H_{n,n-1}^{\F(1)}\times\mathbb{A}^1$ is $\leqslant p^{2n}$. This completes the proof.
\end{proof}

\section{Proof of (\ref{eq:BK_iso_Pro}) in characteristic $p\gg 0$}
The goal of this section is to establish an $\tilde{A}_{n-1,\diamondsuit}$-linear
isomorphism $\alpha_{n*}\beta_n^*\Pro_{n-1,\diamondsuit}^\F\xrightarrow{\sim} \Pro_n^\F$.
The proof is in two steps.
First, we use results of Section \ref{SS_quant_p} together with the
construction of the Procesi bundles from \cite{BK} recalled in Section \ref{SS_Procesi_constr}
to prove a weaker version of (\ref{eq:BK_iso_Pro}),
where we twist  by some line bundles. Then we  prove that no nontrivial twists can
occur.

\subsection{Weaker version of (\ref{eq:BK_iso_Pro})}
Our goal in this section is to prove the following statement. We use the
notation from Section \ref{SS_Procesi_constr}.

\begin{Prop}\label{Prop:commut_diagram}
There are integers $k,\ell$ such that there exists an
$\widetilde{A}_{n-1,\diamondsuit}^{\F}$-linear isomorphism
\begin{equation}\label{eq:BK_iso_weak}
\alpha_{n*}\beta_n^* \Pro_{n-1,\diamondsuit}^\F(k)\cong \Pro_{n}^\F(\ell).
\end{equation}
%
%\begin{equation}\label{eq:diagr_induction1}
%\begin{picture}(120,30)
%\put(10,2){$D^b(\widetilde{A}^{\F}_{n-1,\diamondsuit}\operatorname{-mod})$}
%\put(82,2){$D^b(\widetilde{A}^\F_n\operatorname{-mod})$}
%\put(10,22){$D^b(\Coh(H^{\F}_{n-1,\diamondsuit}))$}
%\put(82,22){$D^b(\Coh(H^\F_n))$}
%\put(23,21){\vector(0,-1){14}}
%\put(93,21){\vector(0,-1){14}}
%\put(41,3){\vector(1,0){40}}
%\put(42,23){\vector(1,0){40}}
%\put(60,24){\tiny $\alpha_{n*}\circ \beta_n^*$}
%\put(60,4){\tiny $\operatorname{Ind}$}
%\put(24,12){\tiny $R\operatorname{Hom}(\Pro^{\F}_{n-1,\diamondsuit}(k),\bullet)$}
%\put(94,12){\tiny $R\Hom(\Pro^{\F}_n(\ell),\bullet)$}
%\end{picture}
%\end{equation}
\end{Prop}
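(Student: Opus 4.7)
My plan is to exploit the bimodule splitting bundle $\underline{\D}_{n,n-1}^\F$ of Lemma \ref{Lem:splitting} as a ``Morita-type kernel'' on $H_{n,n-1}^{\F(1)}$ that transports the Bezrukavnikov--Kaledin splitting bundle for $\underline{\D}_n^\F$ to one for $\beta_n^*\underline{\D}_{n-1,\diamondsuit}^\F$, and then pass from splitting bundles to Procesi bundles via the standard Morita bimodule $\hat{\B}^\F$ of Section \ref{SS_Procesi_constr}.

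I will first work on the formal neighborhood $H_{n,n-1}^{\F(1),\wedge}$ of the preimage of $0$ under $\rho_{n,n-1}^{(1)}$. Here the restrictions of both $\alpha_n^*\underline{\D}_n^\F$ and $\beta_n^*\underline{\D}_{n-1,\diamondsuit}^\F$ split (the former via $\alpha_n^*\hat{\E}_n^\F$, the latter via $\beta_n^*\hat{\E}_{n-1,\diamondsuit}^\F$), and by Lemma \ref{Lem:splitting} the restriction of $\underline{\D}_{n,n-1}^\F$ splits the bimodule Azumaya algebra. Consequently, the ``convolved'' sheaf
\[
\mathcal{F} := \underline{\D}_{n,n-1}^\F \otimes_{\alpha_n^*\underline{\D}_n^\F} \alpha_n^* \hat{\E}_n^\F
\]
is another splitting bundle for $\beta_n^*\underline{\D}_{n-1,\diamondsuit}^\F$ on $H_{n,n-1}^{\F(1),\wedge}$. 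Two splitting bundles of the same Azumaya algebra differ by twisting with an invertible module over the center, so $\mathcal{F}$ and $\beta_n^*\hat{\E}_{n-1,\diamondsuit}^\F$ differ by a line bundle on $H_{n,n-1}^{\F(1),\wedge}$. By Lemma \ref{Lem:Pic_nested} (applied over $\F$, as noted at the end of Section 2.2) any such line bundle is, up to the completion, of the form $\alpha_n^*\Str_n^{\F(1)}(\ell)\otimes\beta_n^*\Str_{n-1,\diamondsuit}^{\F(1)}(k)$ for some $k,\ell\in\Z$.

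Next I will convert splitting bundles into Procesi bundles by tensoring on the left by the Morita bimodule $\hat{\B}^\F_{n-1,\diamondsuit}$ (see (\ref{eq:Procesi_compl_definition})), and then push forward along $\alpha_n$. The identities $R\Gamma(\underline{\D}_{n,n-1}^\F)=\A^\F_{n-1,\diamondsuit}$ from Lemma \ref{Lem:microl_p_properties} and its completed analog, together with the rigidity of $\hat{\E}_n^\F$ and $\hat{\E}_{n-1,\diamondsuit}^\F$, ensure that the derived tensor product and derived pushforward are concentrated in degree zero. This yields an $\widetilde{A}^\F_{n-1,\diamondsuit}$-linear isomorphism of the desired shape on the formal neighborhood $H_n^{\F(1),\wedge}$.

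Finally, I will spread the formal-neighborhood isomorphism to a global one using the contracting $T_c^\F$-action exactly as in Section \ref{SS_Procesi_constr}: both $\alpha_{n*}\beta_n^*\Pro_{n-1,\diamondsuit}^\F(k)$ and $\Pro_n^\F(\ell)$ are $T_c^\F$-equivariant vector bundles on $H_n^{\F(1)}\cong H_n^\F$ with no higher self-Ext, and the completion functor on $\Coh^{T_c^\F}(H_n^{\F(1)})$ is an equivalence onto its image. Matching the $T_c^\F$-equivariant structures on both sides of the formal isomorphism and absorbing the twist $\beta_n^*\Str_{n-1,\diamondsuit}^\F(k)$ into $\Pro_{n-1,\diamondsuit}^\F(k)$ and $\alpha_n^*\Str_n^\F(\ell)$ into $\Pro_n^\F(\ell)$ produces the integers $k,\ell$ of the statement. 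The main obstacles I anticipate are: (a) justifying carefully that $R^i\alpha_{n*}$ vanishes on the relevant sheaves so that the Morita-type transport commutes with pushforward, and (b) verifying that the a priori line-bundle twist on $H_{n,n-1}^{\F(1),\wedge}$ is indeed of tensor-product form coming from $\alpha_n^*$ and $\beta_n^*$, which is what allows the absorption into the twists of the two Procesi bundles.
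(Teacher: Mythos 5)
Your plan follows essentially the same route as the paper's proof: Lemma \ref{Lem:splitting} to split the product Azumaya algebra on $H_{n,n-1}^{\F(1)\wedge}$, Lemma \ref{Lem:Pic_nested} (plus rigidity of line bundles on the formal neighborhood) to pin down the twist, the Morita bimodule $\hat{\B}$ to pass from splitting bundles to Procesi bundles, the identity $R\Gamma(\underline{\D}^\F_{n,n-1})=\A^\F_{n-1,\diamondsuit}$ to control the global section/pushforward structure, and $T_c$-equivariance plus rigidity to spread the completed isomorphism to all of $H_n^{\F(1)}$. These are exactly Steps 1--6 of the paper, and your obstacle (a) is precisely what the paper's chain of commutative diagrams (\ref{eq:diagr_induction2})--(\ref{eq:diagr_induction4}) is designed to handle via that $R\Gamma$ computation.

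The one nontrivial point you elide is the paper's Step 5, the bimodule isomorphism (\ref{eq:bimod_iso}): $\hat{\B}^\F_{n-1,\diamondsuit}\otimes_{\A_{n-1,\diamondsuit}^{\F\wedge}}\A_{n-1,\diamondsuit}^{\F\wedge}\otimes_{\A_n^{\F\wedge}}\hat{\B}^{o,\F}_n\cong\widetilde{A}_n^{\F(1)\wedge}$. Your Step 3 tensors on the left by $\hat{\B}^\F_{n-1,\diamondsuit}$, but $\hat{\Pro}^\F_n$ is built by tensoring with $\hat{\B}^\F_n$, a Morita bimodule for a \emph{different} pair of algebras ($\A_n^{\F\wedge}$ and $\widetilde{A}^{\F(1)\wedge}_n$). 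To identify $\alpha_{n*}\beta_n^*\hat{\Pro}^\F_{n-1,\diamondsuit}(k)$ with $\hat{\Pro}^\F_n(\ell)$ --- rather than with some other rigid $\widetilde{A}^{\F(1)\wedge}_{n-1,\diamondsuit}$-equivariant bundle --- you must check that the two Morita equivalences (both built from the standard $S_n$-equivariant splitting module $\F[\underline{x}^p][[\underline{y}]]$ of the Weyl algebra) are compatible along the inclusion $\A_n^{\F\wedge}\subset\A_{n-1,\diamondsuit}^{\F\wedge}$. This compatibility is what lets the restriction functor on the $\A$-side correspond to the correct induction functor on the $\widetilde{A}$-side, and hence sends the regular module to the regular module. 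Your outline should be supplemented by this verification.
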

\begin{proof}
{\it Step 1}. First of all, we can replace $H_n,V_n$ etc. with their Frobenius twists.
We claim that it is also enough to prove the completed version of (\ref{eq:BK_iso_weak}), where we  replace
$H_n^{\F(1)}$ with $H_{n}^{\F(1)\wedge}:=\rho^{-1}(V_n^{\F(1)\wedge}/S_n)$, etc..
So suppose we know $\alpha_{n*}\beta_n^* \hat{\Pro}_{n-1,\diamondsuit}^\F(k)\cong \hat{\Pro}_{n}^\F(\ell)$
and we want to deduce (\ref{eq:BK_iso_weak}).

%Namely, recall that
%(\ref{eq:diagr_induction1}) is equivalent to the claim that we have an $\widetilde{A}^{\F}_{n-1,\diamondsuit}$-linear
%isomorphism $\Pro_n^\F(\ell)\cong \alpha_{n*}\circ \beta_n^*(\Pro_{n-1}^{\F\diamondsuit}(k))$ of coherent
%sheaves on $H_{n}^{\F(1)}$. Similarly the completed version of (\ref{eq:diagr_induction1})
%is equivalent to a $\widetilde{A}^{\F\wedge}_{n-1,\diamondsuit}$-linear isomorphism of the completions
%of the vector bundles of interest.
Recall from Section \ref{SS_Procesi_constr} that the bundle $\hat{\Pro}_n^{\F}$ is rigid and admits a
$T_c^{\F(1)}$-equivariant structure. As was mentioned in Section \ref{SS_Procesi_constr},
each equivariant structure gives rise to an extension of
$\hat{\Pro}_{n}^{\F}$ to  $H_n^{\F(1)}$ but the extension as a vector bundle is independent
of the choice of an equivariant structure. Now take a $T_c^{\F(1)}$-equivariant structure
on $\hat{\Pro}_{n-1,\diamondsuit}^\F$. It gives rise to an equivariant structure on
$\hat{\Pro}_{n-1,\diamondsuit}^{\F}(k)$ and, since $\alpha_n,\beta_n$ are equivariant,
also  on $\hat{\Pro}_n^{\F}(\ell)$. The latter comes from the isomorphism
$\hat{\Pro}_n^{\F}(\ell)\cong \alpha_{n*}\circ \beta_n^*(\hat{\Pro}_{n-1,\diamondsuit}^{\F}(k))$.
This equivariant structure on $\hat{\Pro}_n^{\F}(\ell)$   will give rise to (\ref{eq:BK_iso_weak}).

{\it Step 2}.
%By (3) of Lemma \ref{Lem:underline_D_properties}, the following diagram
%is commutative.
Consider the following diagram.
\begin{equation}\label{eq:diagr_induction2}
\begin{picture}(120,35)
\put(2,2){$D^b(\A_{n-1,\diamondsuit}^{\F opp}\operatorname{-mod})$}
\put(82,2){$D^b(\A_n^{\F opp}\operatorname{-mod})$}
\put(2,22){$D^b(\Coh(\underline{\D}_{n-1,\diamondsuit}^{\F,opp}))$}
\put(82,22){$D^b(\Coh(\underline{\D}_n^{\F,opp})$}
\put(18,21){\vector(0,-1){14}}
\put(93,21){\vector(0,-1){14}}
\put(34,3){\vector(1,0){47}}
\put(34,23){\vector(1,0){47}}
\put(35,26){\tiny $\alpha^{(1)}_{n*}\left(\underline{\D}_{n,n-1}^\F\otimes_{\underline{\D}_{n-1,\diamondsuit}^{\F,opp}}\beta_n^{(1)*}(\bullet)\right)$}
\put(60,4){\tiny $\operatorname{Res}$}
\put(19,12){\tiny $R\Gamma$}
\put(94,12){\tiny $R\Gamma$}
\end{picture}
\end{equation}
Here $\operatorname{Res}$ is the restriction functor for the inclusion $\A^\F_{n}\hookrightarrow \A^\F_{n-1,\diamondsuit}$.
Note that the vertical arrows are equivalences by \cite[Proposition 2.2]{BK}. Also note that the diagram
is commutative. Indeed, the composition
$$D^b(\A_{n-1,\diamondsuit}^{\F opp}\operatorname{-mod})\rightarrow D^b(\Coh(\D_{n-1,\diamondsuit}^{\F,opp}))
\rightarrow D^b(\Coh(\D_n^{\F,opp})\rightarrow D^b(\A_n^{\F opp}\operatorname{-mod})$$
is the derived tensor product with $R\Gamma(\underline{\D}^\F_{n,n-1})$. By (3) of Lemma
\ref{Lem:underline_D_properties}, the latter object is $\A_{n-1,\diamondsuit}^\F$.
The commutativity follows.

(\ref{eq:diagr_induction2}) remains commutative after we restrict it to $V_n^{\F(1)\wedge}$. So we  get the following commutative diagram.
\begin{equation}\label{eq:diagr_induction3}
\begin{picture}(120,35)
\put(2,2){$D^b(\A_{n-1,\diamondsuit}^{\F\wedge,opp}\operatorname{-mod})$}
\put(82,2){$D^b(\A_n^{\F\wedge,opp}\operatorname{-mod})$}
\put(6,22){$D^b(\Coh(\underline{\D}_{n-1,\diamondsuit}^{\F\wedge,opp}))$}
\put(82,22){$D^b(\Coh(\underline{\D}_n^{\F\wedge,opp})$}
\put(18,21){\vector(0,-1){14}}
\put(93,21){\vector(0,-1){14}}
\put(34,3){\vector(1,0){48}}
\put(39,23){\vector(1,0){42}}
%\put(35,26){\tiny $\alpha^{(1)}_{n*}\left(\D_{n,n-1}^\F\otimes_{\D_{n-1}^{\diamondsuit\F}}\beta_n^{(1)*}(\bullet)\right)$}
%\put(50,4){\tiny $\operatorname{Res}$}
%\put(24,12){\tiny $R\Gamma$}
%\put(94,12){\tiny $R\Gamma$}
\end{picture}
\end{equation}

{\it Step 3}. Consider the vector bundle $\underline{\D}_{n,n-1}^{\F\wedge}$ on $H_{n,n-1}^{\F(1)\wedge}$.
By Lemma \ref{Lem:splitting}, it is a splitting bundle
for the Azumaya algebra $\alpha_{n}^{(1)*}\underline{\D}_{n}^{\F\wedge,opp}\otimes \beta_{n}^{(1)*}\underline{\D}_{n-1,\diamondsuit}^{\F\wedge}
$. But both Azumaya algebras $\underline{\D}_{n}^{\F\wedge,opp}$ and $\underline{\D}_{n-1,\diamondsuit}^{\F\wedge}$
split with splitting  bundles  $\hat{\E}_n^{\F*}, \hat{\E}_{n-1,\diamondsuit}^\F$, respectively.
Since a splitting bundle is defined uniquely up to a twist with a line bundle,
there is a line bundle $\mathcal{L}$ on $H_{n,n-1}^{\F(1)\wedge}$ such that we have
the following isomorphism:
\begin{equation}\label{eq:compl_iso}
\underline{\D}_{n,n-1}^{\F\wedge}\cong \alpha_{n}^{(1)*}\left(\hat{\E}_{n}^{\F*}\right)\otimes \beta_{n}^{(1)*}(\hat{\E}_{n-1,\diamondsuit}^{\F})
\otimes \mathcal{L}.
\end{equation}

We claim that $\mathcal{L}=\Str(-k,\ell)^{\wedge}$ for some $k,\ell\in \Z$. Indeed, the higher cohomology
of $\Str_{H_{n,n-1}^{\F(1)\wedge}}$ vanishes because the same holds for $H_{n,n-1}^\F$. So every line
bundle on $H_{n,n-1}^{\F(1)\wedge}$ is rigid. Therefore it extends to $H_{n,n-1}^{\F(1)}$. Our claim
that $\mathcal{L}=\Str(-k,\ell)^{\wedge}$ follows from Lemma \ref{Lem:Pic_nested}. So  we see that
\begin{equation}\label{eq:compl_iso1}
\D_{n,n-1}^{\F\wedge}\cong \alpha_{n}^{(1)*}(\hat{\E}_{n}(k)^{\F*})\otimes \beta_{n}^{(1)*}(\hat{\E}_{n-1,\diamondsuit}(\ell)^{\F}).
\end{equation}

{\it Step 4}. The following commutative diagram is a consequence of (\ref{eq:compl_iso1}) and
commutative diagram (\ref{eq:diagr_induction3}):
\begin{equation}\label{eq:diagr_induction1}
\begin{picture}(120,30)
\put(2,2){$D^b(\A_{n-1,\diamondsuit}^{\F\wedge,opp}\operatorname{-mod})$}
\put(82,2){$D^b(\A_n^{\F\wedge,opp}\operatorname{-mod})$}
\put(10,22){$D^b(\Coh(H^{\F\wedge}_{n-1,\diamondsuit}))$}
\put(82,22){$D^b(\Coh(H^{\F\wedge}_n))$}
\put(23,21){\vector(0,-1){14}}
\put(93,21){\vector(0,-1){14}}
\put(34,3){\vector(1,0){47}}
\put(42,23){\vector(1,0){40}}
\put(60,24){\tiny $\alpha_{n*}\circ \beta_n^*$}
%\put(70,4){\vector(1,0){30}}
\put(24,12){\tiny $R\operatorname{Hom}(\hat{\E}^{\F}_{n-1,\diamondsuit}(k),\bullet)$}
\put(94,12){\tiny $R\Hom(\hat{\E}^{\F}_n(\ell),\bullet)$}
\end{picture}
\end{equation}

{\it Step 5}. Let $\hat{\B}^\F_n,\hat{\B}^{o,\F}_n$ have the same meaning as in Section and let
$\hat{\B}_{n-1,\diamondsuit}^\F, \hat{\B}_{n-1,\diamondsuit}^{o,\F}$ be defined similarly. We have
\begin{align*}
&\hat{\B}^\F_n\otimes_{\A_n^{\F\wedge}}\hat{\B}^{o,\F}_n=\widetilde{A}_n^{\F(1)\wedge}, \quad
\hat{\B}^\F_{n-1,\diamondsuit}\otimes_{\A_{n-1,\diamondsuit}^{\F\wedge}}
\hat{\B}^{o,\F}_{n-1,\diamondsuit}=\widetilde{A}_{n-1,\diamondsuit}^{\F(1)\wedge},\\
&\hat{\Pro}_{n}^{\F}=\hat{\B}^\F_n\otimes_{\A_n^{\F\wedge}} \hat{\E}^\F_n,
\hat{\Pro}_{n-1,\diamondsuit}^{\F}=
\hat{\B}^\F_{n-1,\diamondsuit}\otimes_{\A_{n-1,\diamondsuit}^{\F\wedge}} \hat{\E}_{n-1,\diamondsuit}^{\F},\\
&\hat{\B}^\F_{n-1,\diamondsuit}\otimes_{\A_{n-1,\diamondsuit}^{\F\wedge}}\D_{n,n-1}^{\F\wedge}
\otimes_{\A_n^{\F\wedge}}\hat{\B}^{o,\F}_n
\cong \alpha_{n}^{(1)*}(\hat{\Pro}_{n}(k)^{\F})^*\otimes \beta_{n}^{(1)*}(\hat{\Pro}_{n-1,\diamondsuit}(\ell)^{\F}).
\end{align*}
The first two lines are from Section \ref{SS_Procesi_constr}, while the third line
follows from the first two and (\ref{eq:compl_iso1}).

We claim that we have the following isomorphism of $\widetilde{A}_{n-1,\diamondsuit}^{\F(1)\wedge}$-$\widetilde{A}_{n}^{\F(1)\wedge}$-bimodules
\begin{equation}\label{eq:bimod_iso}
\hat{\B}^\F_{n-1,\diamondsuit}\otimes_{\A_{n-1,\diamondsuit}^{\F\wedge}}\A_{n-1,\diamondsuit}^{\F\wedge}
\otimes_{\A_{n}^{\F\wedge}}\hat{\B}^{o,\F}_n\cong \widetilde{A}_{n}^{\F(1)\wedge}.
\end{equation}
Indeed, recall from Section \ref{SS_Procesi_constr} that the Morita equivalence between $\A_{n}^{\F\wedge}$
and $\widetilde{A}_{n}^{\F(1)\wedge}$ is composed of two Morita equivalences:
\begin{itemize}
\item[(a)]  $\Weyl(V_n^\F)^{\wedge}\otimes_{\A_{n}^{\F\wedge}}\bullet: \A_n^{\F\wedge}\operatorname{-mod}\xrightarrow{\sim}\Weyl(V_n^\F)^{\wedge}\operatorname{-mod}^{S_n}
$,
\item[(b)] the Morita equivalence $\Weyl(V_n^\F)^{\wedge}\operatorname{-mod}^{S_n}
\xrightarrow{\sim} \F[V_n^{\F(1)}]^{\wedge}\operatorname{-mod}^{S_n}$ given by the
standard splitting module $\F[\underline{x}^p][[\underline{y}]]$ for $\Weyl(V_n^\F)^{\wedge}$, which is $S_n$-equivariant.
\end{itemize}
The Morita equivalence for $n-1$ is defined similarly, where in (b) we use the same
standard splitting module. So applying the equivalences in (a) on the left
and on the right to $\A_{n-1,\diamondsuit}^{\F\wedge}$ we get
\begin{align*}\Weyl(V_n^\F)^\wedge\otimes_{\A_{n-1,\diamondsuit}^{\F\wedge}}\A_{n-1,\diamondsuit}^{\F\wedge}
\otimes_{\A_n^{\F\wedge}}\Weyl(V_n^\F)^\wedge
\cong \Weyl(V_n^\F)^\wedge
\otimes_{\A_n^{\F\wedge}}\Weyl(V_n^\F)^\wedge\cong \Weyl(V_n^{\F})^{\wedge}\# S_{n}\end{align*}
And then applying the equivalences in (b), we get $\widetilde{A}_{n}^{\F(1)\wedge}$.
This proves (\ref{eq:bimod_iso}).

{\it Step 6}. Applying the Morita equivalences from Step 5 to (\ref{eq:diagr_induction1})

\begin{equation}\label{eq:diagr_induction4}
\begin{picture}(120,30)
\put(10,2){$D^b(\widetilde{A}^{\F(1)\wedge}_{n-1,\diamondsuit}\operatorname{-mod})$}
\put(82,2){$D^b(\widetilde{A}^{\F(1)\wedge}_n\operatorname{-mod})$}
\put(10,22){$D^b(\Coh(H^{\F(1)\wedge}_{n-1,\diamondsuit}))$}
\put(82,22){$D^b(\Coh(H^{\F(1)\wedge}_n))$}
\put(23,21){\vector(0,-1){14}}
\put(93,21){\vector(0,-1){14}}
\put(42,3){\vector(1,0){40}}
\put(42,23){\vector(1,0){40}}
\put(60,24){\tiny $\alpha_{n*}\circ \beta_n^*$}
\put(60,4){\tiny $\operatorname{Ind}$}
\put(24,12){\tiny $R\operatorname{Hom}(\hat{\Pro}^{\F}_{n-1,\diamondsuit}(k),\bullet)$}
\put(94,12){\tiny $R\Hom(\hat{\Pro}^{\F}_n(\ell),\bullet)$}
\end{picture}
\end{equation}
As in Step 2, the vertical arrows are equivalences.
Computing the image of $\widetilde{A}^{\F(1)\wedge}_{n-1,\diamondsuit}$ in
$D^b(\Coh(H^{\F(1)\wedge}_n))$ using diagram (\ref{eq:diagr_induction4})
in two different ways we arrive at $\alpha_{n*}\beta_n^*(\hat{\Pro}^\F_{n-1,\diamondsuit}(\ell))
\cong \hat{\Pro}^\F_{n}(k)$, an $\widetilde{A}_{n-1,\diamondsuit}^{\F(1)\wedge}$-linear isomorphism.
Thanks to Step 1, the proof of the proposition is complete.
%So tensoring (\ref{eq:diagr_induction3}) with $\hat{\B}_{n-1,\diamondsuit}^\F, \hat{\B}_n^{o,\F}$
%we get diagram (\ref{eq:diagr_induction1}). Using Step 1, we complete the proof.
\end{proof}

%\subsection{Lift to characteristic $0$}
%The main goal of this section is to lift the result of Proposition
%\ref{Prop:commut_diagram} to characteristic $0$.
%
%\begin{Prop}\label{Prop:commut_diagram_0}
%We have a $\widetilde{A}_{n-1,\diamondsuit}$-linear isomorphism $\alpha_{n*}\beta_{n}^* (\Pro_{n-1,\diamondsuit}(k))
%\cong \Pro_{n}(\ell)$.
%\end{Prop}
%\begin{proof}
%The proof is in several steps.
%
%{\it Step 1}.
%Note that the $\widetilde{A}^\F_{n-1,\diamondsuit}$-linear isomorphism $\alpha_{n*}\beta_{n}^* (\Pro^\F_{n-1,\diamondsuit}(k))
%\cong \Pro^\F_{n}(\ell)$ from Proposition \ref{Prop:commut_diagram} is defined over some finite field
%$\F_q$. Let $\Ring'$ have the same meaning as in Section \ref{SS_Procesi_constr}. It is enough to show that
%there is a $\widetilde{A}^{\Ring'}_{n-1,\diamondsuit}$-linear isomorphism $\alpha_{n*}\beta_{n}^* %(\Pro^{\Ring'}_{n-1,\diamondsuit}(k))
%\cong \Pro^{\Ring'}_{n}(\ell)$.
%
%{\it Step 2}. First of all, let us show that $\alpha_{n*}\beta_{n}^* (\Pro^{\Ring'}_{n-1,\diamondsuit}(k))$
%is a vector bundle on $H_{n}^{\Ring'}$.
%\end{proof}

\subsection{Absence of twists}
Here we prove the following lemma.

\begin{Lem}\label{Lem:no_twist}
In Proposition \ref{Prop:commut_diagram} we have $k=\ell=0$.
\end{Lem}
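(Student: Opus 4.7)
The plan is to apply the $S_{n-1}$-invariant functor to the isomorphism of Proposition \ref{Prop:commut_diagram} and then match the resulting decompositions on a formal neighborhood of a generic point of the exceptional divisor in $H_n^\F$.

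Since $S_{n-1}$ acts trivially on $H_n^\F$, $H_{n,n-1}^\F$, and $H_{n-1,\diamondsuit}^\F$, the functor $(-)^{S_{n-1}}$ commutes with $\alpha_{n*}$ and $\beta_n^*$. The normalization $(\Pro_{n-1,\diamondsuit}^\F)^{S_{n-1}}=\Str_{n-1,\diamondsuit}^\F$ follows from the normalization of $\Pro_{n-1}^\F$, and by Proposition \ref{Prop:Procesi_classif}(3) together with Remark \ref{Rem:char_p_version} we may arrange $(\Pro_n^\F)^{S_{n-1}}\cong \mathcal{T}_n^\F$, possibly after replacing $\Pro_n^\F$ by its dual via Corollary \ref{Cor:Procesi_reln}. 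Taking $S_{n-1}$-invariants of the isomorphism of Proposition \ref{Prop:commut_diagram} then reduces the lemma to showing that an isomorphism
\[
\alpha_{n*}\Str_{n,n-1}(k,0)\cong \mathcal{T}_n(\ell)
\]
of rank-$n$ vector bundles on $H_n^\F$ forces $k=\ell=0$.

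I would then restrict both sides to the formal neighborhood $\rho_n^{-1}((V_n/S_n)^{\wedge_b})$ of a generic point $b$ of the exceptional divisor, as in Lemma \ref{Lem:Hilb_local}. By Lemma \ref{Lem:taut_restriction} the right-hand side becomes $\Str(\ell)^{\oplus(n-1)}\oplus \Str(1+\ell)$, where $\Str$ and $\Str(1)$ abbreviate the restrictions of $\Str_n$ and $\Str_n(1)$. By Lemma \ref{Lem:nested_Hilb_local}, $\alpha_n^{-1}$ of the same neighborhood is a disjoint union of $n-2$ simple-removal components, each mapped isomorphically onto the base by $\bar{\alpha}_n$, plus one double-reduction component isomorphic to $\operatorname{Bl}_\Delta(V_2)\times V_{n-2}^{\wedge_0}$ and mapped via the $S_2$-quotient. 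On each simple-removal component the ideal $J'$ retains the double point of $J$, so $\beta_n$ lands in the exceptional locus of $H_{n-1}$ and $\beta_n^*\Str_{n-1,\diamondsuit}(k)$ restricts to the local $\Str(k)$. On the double-reduction component $J'$ consists of $n-1$ distinct simple points, so $\beta_n$ lands in the generic locus of $H_{n-1}$ where $\Str_{n-1}(k)$ is trivial, and the $S_2$-quotient pushforward produces $\Str\oplus \Str(1)$. Altogether,
\[
\alpha_{n*}\Str_{n,n-1}(k,0)|_{\rho_n^{-1}((V_n/S_n)^{\wedge_b})}\cong \Str(k)^{\oplus(n-2)}\oplus \Str\oplus \Str(1).
\]

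Matching the two decompositions as unordered multisets in $\operatorname{Pic}$ of the formal neighborhood, the equality $\{k^{n-2},0,1\}=\{\ell^{n-1},\ell+1\}$ (for $n\geq 3$) forces $\{0,1\}\subseteq\{\ell,\ell+1\}$, hence $\ell=0$; the remaining $n-2$ copies then give $k=0$. The case $n=2$ is immediate since $\operatorname{Pic}(H_{n-1})=0$ makes the $k$-twist vacuous. The main obstacle is the explicit identification of $\beta_n^*\Str_{n-1,\diamondsuit}(k)$ on each component of Lemma \ref{Lem:nested_Hilb_local}: one must track carefully how the exceptional line bundle of $H_{n-1}$ pulls back along the local projections, verifying in particular that on each simple-removal component the pullback is precisely the first power (rather than a higher power) of the $\operatorname{Bl}_\Delta(V_2/S_2)$-exceptional bundle.
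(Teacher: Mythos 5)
Your overall strategy — take $S_{n-1}$-invariants of the isomorphism in Proposition \ref{Prop:commut_diagram}, restrict to the formal neighborhood $\rho_n^{-1}((V_n/S_n)^{\wedge_b})$, and compare the resulting decompositions as multisets of local line bundles — is exactly the paper's. Your local computation (the $k$-twist lands on the $n-2$ ``simple-removal'' components of type (i), while on the ``double-reduction'' component of type (ii) the pullback $\beta_n^*\Str_{n-1,\diamondsuit}(k)$ is trivial because $J'$ is reduced there) is in fact the geometrically correct one; it yields $\alpha_{n*}\Str_{n,n-1}(k,0)|_{\wedge}\cong\Str(k)^{\oplus n-2}\oplus\Str\oplus\Str(1)$, which correctly specializes to $\mathcal{T}_n|_{\wedge}$ when $k=0$.

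However, the step ``we may arrange $(\Pro_n^\F)^{S_{n-1}}\cong\mathcal{T}_n^\F$, possibly after replacing $\Pro_n^\F$ by its dual'' is a genuine gap. The lemma concerns the specific Bezrukavnikov--Kaledin bundle $\Pro_n^\F$, and one is not free to substitute its twisted dual: a priori the BK bundle could be the normalized Procesi bundle with $(\Pro_n^\F)^{S_{n-1}}\cong\mathcal{T}_n^{\F*}$, in which case your multiset comparison produces the second solution $k=\ell=1$ (indeed $\{1^{n-2},0,1\}=\{1^{n-1},0\}$, matching $\mathcal{T}_n^{\F*}(1)|_{\wedge}=\Str(1)^{\oplus n-1}\oplus\Str(0)$). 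Thus the multiset argument alone only shows $k=\ell\in\{0,1\}$, and the two alternatives correspond exactly to the two abstract Procesi bundles of Proposition \ref{Prop:Procesi_classif}. To actually pin down $k=\ell=0$ the paper considers both possibilities $\mathcal{T}_n^{\F}(\cdot)$ and $\mathcal{T}_n^{\F*}(\cdot)$ from the outset, enumerates the surviving options, and then eliminates the extra ones by also extracting the $(S_{n-1},\sgn)$-isotypic component of the isomorphism (using Lemma \ref{Lem:Procesi_sgn} and Corollary \ref{Cor:Procesi_reln}) together with induction on $n$; some version of that extra input is needed to close your argument.
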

\begin{proof}
Recall that $(\Pro^\F_{n-1,\diamondsuit})^{S_{n-1}}\cong \Str_{n-1,\diamondsuit}^\F$
by the construction. So (\ref{eq:BK_iso_weak}) implies
\begin{equation}\label{eq:iso_pushforward}
\alpha_{n*}(\Str_{n,n-1}^\F(\ell,0))\cong \Pro_n^\F(k)^{S_{n-1}}.\end{equation}
Consider the restrictions of both sides of (\ref{eq:iso_pushforward})
to $\rho_n^{-1}((V_n^\F/S_n)^{\wedge_b})$, where $b$ is as in Lemma
\ref{Lem:Hilb_local}.

By Proposition \ref{Prop:Procesi_classif}, the right hand side of
(\ref{eq:iso_pushforward}) is either $\mathcal{T}_n^\F(k)$ or $\mathcal{T}_n^{\F^*}(k)$.
So by Lemma \ref{Lem:taut_restriction}, the restriction of the right hand side is
\begin{equation}\label{eq:res1}\Str^{\F\wedge}(k)^{\oplus n-1}\oplus \Str^{\F\wedge}(k\pm 1),\end{equation}
where the plus sign corresponds
to $\mathcal{T}_n^\F$ and the minus sign corresponds to $\mathcal{T}_n^{\F*}$.

The restriction of $\Str_{n,n-1}^\F(\ell,0)$
to $\alpha_n^{-1}(\rho^{-1}((V_n^\F/S_n)^{\wedge_b}))$ is $\Str^{\F\wedge}_{n,n-1}$ for the components of form
(i) (in Lemma \ref{Lem:nested_Hilb_local}) and $\Str^{\F\wedge}_{n,n-1}(\ell)$ for the component of form (ii).
Let us compute $\alpha_{2*}(\Str^{\F\wedge}_{2,1}(\ell))$. First, note that $\alpha^*_2\Str_2^\F(1)=\Str_{2,1}^\F(2)$.
Thanks to the projection formula, it is enough to compute $\alpha_{2*}(\Str^{\F\wedge}_{2,1}(\ell))$ for  $\ell=0,-1$.
We claim that here $\alpha_{2*}\Str_{2,1}^\F(\ell)\cong \Str_2^\F(\ell)\oplus \Str_2^\F(\ell+1)$.
Indeed, we already know this for $\ell=0$, while the bundle $\Str_{2,1}^\F(-1)$ is the canonical bundle
of $H^\F_{n,n-1}$ so  $\alpha_{2*}\Str_{2,1}^\F(-1)=(\alpha_{2*}\Str_{2,1}(0))^*$.

We conclude that, for $\ell=2\ell_1+\ell_0$ with $\ell_0\in \{-1,0\}$, the restriction of the left hand side of (\ref{eq:iso_pushforward})
is
\begin{equation}\label{eq:res2}(\Str_2^{\F\wedge})^{\oplus n-2}\oplus \Str_2^{\F\wedge}(\ell_1+\ell_0)+\Str_2^{\F\wedge}(\ell_1+\ell_0+1).\end{equation}

Comparing (\ref{eq:res1}) and (\ref{eq:res2}), we see that we have the following
options:
\begin{itemize}
\item[(I)] $k=\ell=0$.
\item[(II)] $k=0,\ell=-1$.
\item[(III)] $n=3, k=\ell=1$.
\end{itemize}
In particular, $\Pro_2^\F=\Str^\F\oplus \Str^\F(1)$ (here $\ell=0$ by default).

Let us show that (III) is not possible. Here we have $(\Pro_3^\F)^{S_2}\cong
\mathcal{T}_3^\F$. From Lemma \ref{Lem:Procesi_sgn} it follows  that $\Str_3^\F(2)=\left(\Pro_3^\F(1)\right)^{\sgn}$.
Note that $$\alpha_{3*}\circ \beta_3^*(\Str^\F(2))=\alpha_{3*}\circ \beta_3^*(\Pro_2^\F(1)^{\sgn})=(\Pro_3^\F(1))^{S_2,\operatorname{sgn}}$$
By the computations above, the restriction of the left hand side to $\rho^{-1}((V_3^\F/S_3)^{\wedge_b})$
is $\Str^\F(0)\oplus \Str^\F(1)\oplus \Str^\F(2)$. However, the restriction of $(\Pro_3^\F(1))^{S_2,\operatorname{sgn}}$
may only have two pairwise nonisomorphic summands, compare to the proof of Lemma
\ref{Lem:Procesi_sgn}.

Let us show that (II) is not possible. We induct on $n$ with the base of $n=2$ done above.
In particular, $(\Pro_{n-1}^{\F})^{S_{n-2}}=\mathcal{T}_{n-1}^\F$ and hence,
by Lemma \ref{Lem:Procesi_sgn}, $(\Pro_{n-1}^\F)^{\operatorname{sgn}}=\Str^\F(1)$.
On the other hand, $(\Pro^\F_n)^{S_{n-1}}=\mathcal{T}_n^{\F*}$. From (\ref{eq:BK_iso_weak}) we deduce that
\begin{equation}\label{eq:iso_pushforward1}(\Pro^\F_n)^{S_{n-1},\sgn}=\alpha_{n*}\Str_{n,n-1}^\F.
\end{equation} But
$(\Pro^\F_n)^{S_{n-1},\sgn}=(\Pro^\F_n)^{\sgn}\otimes \left((\Pro^\F_n)^{S_{n-1}}\right)^*=
\mathcal{T}_n^\F(-1)$, the former equality follows from Corollary
\ref{Cor:Procesi_reln}, while the latter   holds thanks to Lemma \ref{Lem:Procesi_sgn}.
Restricting both sides of (\ref{eq:iso_pushforward1}) to $\rho_n^{-1}((V_n^\F/S_n)^{\wedge_b})$
we see that the restrictions differ by a twist with $\Str(-1)$, which leads to a contradiction.
\end{proof}

\section{On normalization of  $IH_n$}\label{S_n!_proof}
\subsection{Theorem \ref{Thm:bijective_normalization} in characteristic $p$}
\begin{Prop}\label{Prop:bijective_normal_p}
The following claims are true:
\begin{enumerate}
\item The sheaf $\Pro_{n}^\F$ has a natural algebra structure, let $X_n^\F$ denote
the relative spectrum of $\Pro_n^\F$.
\item  The variety $X_n^\F$ is normal, Cohen-Macaulay and Gorenstein.
The canonical bundle is obtained by pulling back $\Str^\F_n(-1)$ from $H_n^\F$.
\item The natural morphism $X_n^\F\rightarrow IH_n^\F$ is bijective
and is an isomorphism over $V_n^{1\F}/S_n$.
\end{enumerate}
\end{Prop}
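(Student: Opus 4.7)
The plan is to handle the three claims in order, with claim (3) used to complete the normality statement in (2).

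For (1), the right $\widetilde{A}_n^\F$-module structure on $\Pro_n^\F$ restricts along the subalgebra $\F[V_n]\hookrightarrow\widetilde{A}_n^\F=\F[V_n]\#S_n$ to an action by $\Str_{H_n^\F}$-linear endomorphisms, turning $\Pro_n^\F$ into a coherent sheaf on $H_n^\F\times V_n^\F$. The two induced actions of $\F[V_n]^{S_n}$ (via $\Str_n^\F$ through the Hilbert--Chow map, and from the inclusion $\F[V_n]^{S_n}\subset\F[V_n]$) agree, so $\Pro_n^\F$ is set-theoretically supported on $IH_n^\F$. Over the open locus $\rho_n^{-1}(V_n^{0\F}/S_n)$, $\Pro_n^\F$ is identified with the pushforward of $\Str_{V_n^{0\F}}$ along the \'etale $S_n$-cover $V_n^{0\F}\to V_n^{0\F}/S_n$, so it carries a canonical commutative $\Str_n^\F$-algebra structure there. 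The complement has codimension $\geq 2$ in the smooth $H_n^\F$, and since $\Pro_n^\F\otimes_{\Str_n^\F}\Pro_n^\F$ is locally free, the multiplication extends uniquely to all of $H_n^\F$ by Hartogs' theorem.

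For (2), write $\pi_n\colon X_n^\F\to H_n^\F$ for the finite morphism with $\pi_{n*}\Str_{X_n^\F}=\Pro_n^\F$. Cohen--Macaulayness is immediate from $\Pro_n^\F$ being locally free over the smooth $H_n^\F$. Grothendieck duality gives $\pi_{n*}\omega_{X_n^\F/H_n^\F}=\mathcal{H}om_{\Str_n^\F}(\Pro_n^\F,\Str_n^\F)=(\Pro_n^\F)^*$, and Corollary \ref{Cor:Procesi_reln} (in the $\F$-version supplied by Remark \ref{Rem:char_p_version}) identifies $(\Pro_n^\F)^*\cong\Pro_n^\F(-1)$ as $\Pro_n^\F$-modules, the $\F[V_n]$-linearity there guaranteeing compatibility with the algebra structure from (1). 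Hence $\omega_{X_n^\F/H_n^\F}\cong\pi_n^*\Str_n^\F(-1)$; since $H_n^\F$ is symplectic, $\omega_{H_n^\F}$ is trivial, giving $\omega_{X_n^\F}\cong\pi_n^*\Str_n^\F(-1)$, a line bundle. For normality we invoke Serre's $R_1+S_2$ criterion: $S_2$ is inherited from Cohen--Macaulayness, while $R_1$ follows from claim (3) and the smoothness of $IH_n^\F$ over $V_n^{1\F}/S_n$, itself a consequence of Example \ref{Ex:nested2} and Lemma \ref{Lem:nested_Hilb_local}, combined with the fact that $H_n^\F\setminus\rho_n^{-1}(V_n^{1\F}/S_n)$ has codimension $\geq 2$.

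For (3), the $\F[V_n]$-action supplies a finite morphism $\pi_n'\colon X_n^\F\to IH_n^\F$ over $H_n^\F$. Since $\pi_n$ and $\eta_n$ each have degree $n!$, $\pi_n'$ has degree $1$; being finite dominant between irreducible varieties (irreducibility of $X_n^\F$ follows from the Cohen--Macaulayness in (2) and the generic identification with $V_n^{0\F}$), it is surjective. Over $V_n^{0\F}/S_n$ both sides restrict to $V_n^{0\F}$, where $\pi_n'$ is the identity. To extend to an isomorphism over $V_n^{1\F}/S_n$, invoke Zariski's Main Theorem: $\pi_n'$ is finite birational and the target $IH_n^\F$ is smooth over this locus, hence normal, so $\pi_n'$ is an isomorphism there. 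For bijectivity in general, observe that each set-theoretic fiber $\pi_n^{-1}(x)$ embeds into $\eta_n^{-1}(x)$ (since $\pi_n$ factors through $\pi_n'$), a set of cardinality $n!/\prod n_i!$ for the multiplicity partition $(n_i)$ of $\rho_n(x)$; combined with $\sum_{y\in\eta_n^{-1}(x)}|(\pi_n')^{-1}(y)|=|\pi_n^{-1}(x)|\leq|\eta_n^{-1}(x)|$ and surjectivity of $\pi_n'$, this forces every $(\pi_n')^{-1}(y)$ to be a single point.

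The most delicate step will be identifying $(\Pro_n^\F)^*$ with $\Pro_n^\F(-1)$ not merely as vector bundles on $H_n^\F$ but as $\Pro_n^\F$-modules (equivalently, as line bundles on $X_n^\F$), since this is what upgrades Gorenstein-ness and pins down the canonical bundle. Passing from the $\F[V_n]$-linear isomorphism of Corollary \ref{Cor:Procesi_reln} to a $\Pro_n^\F$-module isomorphism requires combining the commutativity of the algebra structure from (1) with a careful tracking through Grothendieck duality for the finite morphism $\pi_n$.
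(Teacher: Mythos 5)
Your proposal takes a genuinely different route from the paper's. The paper's proof is inductive on $n$: it constructs the algebra structure on $\Pro_n^\F$ by transporting the algebra structure of $\Str_{X_{n,n-1}^\F}$ through the isomorphism $\alpha_{n*}\beta_n^*\Pro_{n-1,\diamondsuit}^\F\cong\Pro_n^\F$ (Proposition \ref{Prop:commut_diagram} with Lemma \ref{Lem:no_twist}), then realizes $X_n^\F$ as a quotient of $X_{n,n-1}^\F$ via $\widetilde{\alpha}_n$, and proves Gorenstein-ness and bijectivity by reducing to the nested scheme. Your argument instead tries to construct all the structure directly on $X_n^\F$ without the induction, and two of the steps have gaps that do not appear fixable within your framework.

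\textbf{The Hartogs extension in (1) fails.} The complement of $\rho_n^{-1}(V_n^{0\F}/S_n)$ in $H_n^\F$ is the exceptional locus of the Hilbert--Chow morphism, which is a \emph{divisor}, not a codimension-$\geq 2$ subset. So the claim that the multiplication ``extends uniquely to all of $H_n^\F$ by Hartogs' theorem'' is unjustified: a section of $\mathcal{H}om(\Pro_n^\F\otimes\Pro_n^\F,\Pro_n^\F)$ defined only over the dense open locus of distinct points could a priori have a pole along the exceptional divisor. One could try to work over $\rho_n^{-1}(V_n^{1\F}/S_n)$ instead, whose complement does have codimension $\geq 2$, but then you would need to first establish the algebra structure over the generic singular stratum by analyzing the local form of $\Pro_n^\F$ there (as in \cite[Proposition 4.1]{L_Procesi} and Lemma \ref{Lem:taut_restriction}), which your proposal does not do. The paper sidesteps this entirely: the algebra structure is inherited from $X_{n,n-1}^\F$, which is defined as a fiber product of known schemes.

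\textbf{The bijectivity argument in (3) is circular.} You assert that ``each set-theoretic fiber $\pi_n^{-1}(x)$ embeds into $\eta_n^{-1}(x)$ (since $\pi_n$ factors through $\pi_n'$).'' Factoring $\pi_n=\eta_n\circ\pi_n'$ gives a \emph{surjection} of fibers $\pi_n^{-1}(x)\twoheadrightarrow\eta_n^{-1}(x)$ (surjective since $\pi_n'$ is surjective), not an injection; injectivity of this map on a fiber is precisely the statement that every fiber of $\pi_n'$ over a point of $\eta_n^{-1}(x)$ is a singleton, which is what you are trying to prove. Thus the inequality $|\pi_n^{-1}(x)|\leq|\eta_n^{-1}(x)|$ is exactly the conclusion in disguise, and without it the argument collapses. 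The paper's proof of bijectivity works quite differently: by the inductive hypothesis $\iota_{n,n-1}\colon X_{n,n-1}^\F\to IH_{n,n-1}^\F$ is bijective, and the fibers of $\hat{\alpha}_n$ coincide with the fibers of $\bar{\alpha}_n\colon H_{n,n-1}^\F\to U_n^\F$, which are connected; since $X_n^\F=\Spec_{\Str_n^\F}\widetilde{\alpha}_{n*}\Str_{X_{n,n-1}^\F}$, the fibers of $\iota_n\circ\widetilde{\alpha}_n$ are connected, forcing the finite $\iota_n$ to be bijective. Without realizing $X_n^\F$ as a quotient of $X_{n,n-1}^\F$, you have no access to this connectedness argument.

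You already flag the gap in (2) (passing from a $\C[V_n]$-linear to a $\Pro_n^\F$-module isomorphism $(\Pro_n^\F)^*\cong\Pro_n^\F(-1)$), and that gap is real: a pushforward isomorphism of line bundles on $X_n^\F$ does not by itself yield an isomorphism of the line bundles. This could plausibly be repaired once normality and smoothness outside codimension $2$ are in hand, by matching the two line bundles over $V_n^{1\F}$ and extending, but that requires a local computation of $\omega_{IH_n^\F}$ over the generic singular stratum. The paper instead propagates $K_{X_{n-1}^\F}=\pi_{n-1}^*\Str_{n-1}^\F(-1)$ through $X_{n,n-1}^\F$ by the inductive hypothesis, which is cleaner.
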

\begin{proof}
We will prove this by induction on $n$ starting with $n=2$. Here $IH^\F_2=U^\F_2=H^\F_{2,1}$
are Cohen-Macaulay and Gorenstein and (1)-(3) hold. Now suppose that they hold for
$n-1$ and prove them for $n$.

{\it Step 1}. Let us construct a sheaf of algebras structure on $\Pro_n^\F$.
Set $X^\F_{n,n-1}:=H^\F_{n,n-1}\times_{H^\F_{n-1}}X^\F_{n-1}$. This  scheme is finite and flat over
$H^\F_{n,n-1}$ so it is Cohen-Macaulay. Over $V_{n-1,\diamondsuit}^{0\F}/S_{n-1}\subset H_{n,n-1}^\F$,
the scheme $X^\F_{n,n-1}$ becomes $V^{0,\F}_{n-1,\diamondsuit}$. Therefore $X^\F_{n,n-1}$
is generically reduced, hence reduced. We denote the natural morphism $X^\F_{n,n-1}\rightarrow H^\F_{n,n-1}$ by
$\pi_{n,n-1}$ and the morphism $X^\F_{n,n-1}\rightarrow X^\F_{n-1,\diamondsuit}$
by $\widetilde{\beta}_{n}$. 

Consider the composition $\alpha_n\circ \pi_{n,n-1}:X^\F_{n,n-1}\rightarrow H^\F_n$.
Note that  $(\alpha_n\circ \pi_{n,n-1})_* \Str_{X^\F_{n,n-1}}=\alpha_{n*}\beta_n^* \Pro_{n-1,\diamondsuit}^\F$.
Hence, by Proposition \ref{Prop:commut_diagram} combined with Lemma \ref{Lem:no_twist},
\begin{equation}\label{eq:X_pushforward}(\alpha_n\circ \pi_{n,n-1})_* \Str_{X^\F_{n,n-1}}\cong\Pro_n^\F,
\end{equation} an $\widetilde{A}_{n-1,\diamondsuit}^\F$-linear isomorphism.
This isomorphism, in particular, equips $\Pro_n^\F$ with the structure of a sheaf of algebras.

{\it Step 2}.   Set $X_n^\F:=\Spec_{\Str_n^\F}(\Pro_n^\F)$. By the
construction of $X_n^\F$ we have a natural morphism $\widetilde{\alpha}_n:X_{n,n-1}^\F
\rightarrow X_n^\F$. It satisfies
\begin{equation}\label{eq:X_alpha_morphism}\widetilde{\alpha}_{n*}\Str_{X_{n,n-1}^\F}=\Str_{X_n^\F}.\end{equation}
So we have a degree $n!$ finite morphism  $X_n^\F\rightarrow H_{n}^\F$ to be denoted by
$\pi_n$, it satisfies $\alpha_n\circ \pi_{n,n-1}=\pi_n\circ \widetilde{\alpha}_n$.
In particular, $X_n^\F$ is Cohen-Macaulay. Over $V_n^{0,\F}/S_n\subset H_n^\F$,
the morphism $\alpha_n\circ \pi_{n,n-1}$ becomes the natural morphism
$V^{0\F}_n\rightarrow V^{0\F}_n/S_n$. Therefore $X_n^\F\times_{H_n}(V_n^{0\F}/S_n)=V_n^{0\F}$.
It follows that $X_n^\F$ is reduced.

{\it Step 3}. Let us construct a morphism $X_n^\F\rightarrow IH_n^\F$. Note that $X_{n,n-1}^\F$
comes with a natural morphism $\widetilde{\rho}_{n,n-1}:X_{n,n-1}^\F\rightarrow V_n^\F$.
 (\ref{eq:X_alpha_morphism}) shows that $\widetilde{\alpha}_n$ identifies
$\F[X_{n,n-1}]$ and $ \F[X_{n}]$. This gives rise to $\widetilde{\rho}_n:X_n^\F\rightarrow V_n^\F$
with $\widetilde{\rho}_{n,n-1}=\widetilde{\rho}_n\circ \widetilde{\alpha}_n$. The compositions
$X_n^\F\rightarrow V_n^\F\rightarrow V_n^\F/S_n$ and $X_{n}^\F\rightarrow H_n^\F\rightarrow V_n^\F/S_n$
coincide over $V_n^{0\F}/S_n$, hence coincide. Therefore we get a morphism $\iota_n:X_n^\F\rightarrow
IH_n^\F$. This morphism is finite by construction and is an isomorphism over $V_n^{0\F}\subset IH_n^\F$.

{\it Step 4}. By \cite[Lemma 3.3.1]{Haiman_n!} and the case of $n=2$,
$IH_n^\F$ is smooth over $V_n^{1\F}/S_n$. It follows that $\iota_n$ is an isomorphism
over $V_n^{1\F}/S_n$. In particular,
$X_n^\F$ is smooth outside a codimension $2$ locus. Since $X_n^\F$ is Cohen-Macaulay, it  is normal 
and $\iota_n$ is the normalization morphism. 

{\it Step 5}. Let us show that $X_n^\F$ is Gorenstein with canonical bundle
$\pi_n^*\Str_n^\F(-1)$. As in \cite[Section 6.1]{Haiman_CDM}, we use the inductive assumption -- $K_{X_{n-1}^\F}=\pi_{n-1}^*(\Str_{n-1}^\F(-1))$ -- to see that
the canonical bundle of $X_{n,n-1}^\F$ is $\pi_{n,n-1}^*(\Str_{n,n-1}^\F(0,-1))$.
Therefore $\pi_n^*(\Str_n^\F(-1))=\tilde{\alpha}_{n*}\pi_{n,n-1}^*(\Str_{n,n-1}^\F(0,-1))$
is the canonical sheaf of $X_n^\F$.

{\it Step 6}. It remains to show that $\iota_n:X_n^\F\rightarrow IH_n^\F$ is bijective.
By the inductive assumption, $\iota_{n-1}:X_{n-1}^\F\rightarrow IH_{n-1}^\F$
is bijective. Therefore the natural morphism $\iota_{n,n-1}:
X_{n,n-1}^\F\rightarrow IH_{n,n-1}^\F$ is bijective as well. The fibers
of the natural morphism $\hat{\alpha}_n:IH^\F_{n,n-1}\rightarrow IH^\F_n$ (as algebraic varieties)
coincide with the fibers of $\bar{\alpha}_n:H^\F_{n,n-1}\rightarrow U^\F_n$.
The latter fibers are all connected. So we see that the fibers of $\iota_n\circ \widetilde{\alpha}_n:
X_{n,n-1}^\F\rightarrow IH_n^\F$ are connected. It follows that the finite morphism $\iota_n$ is bijective.
\end{proof}

\subsection{Proof of main results} In this section we prove Theorems \ref{Thm:bijective_normalization} and \ref{Thm:Procesi_inductive}.

Consider the morphism $IH_n^\Ring\times_{V_n^\Ring/S_n}V_{n}^{1\Ring}/S_{n}\rightarrow H_n^\Ring$.
It is quasi-finite so canonically decomposes as $IH_n^\Ring\times_{V_n^\Ring/S_n}V_{n}^{1\Ring}/S_{n}
\rightarrow X_n^\Ring\rightarrow H_n^\Ring$ for a normal scheme $X_n^\Ring$, where the first arrow is 
an open embedding and the second arrow is finite. Thanks to Proposition
\ref{Prop:bijective_normal_p}, for $p$ sufficiently large, the base change of $X_n^\Ring$ to $\F$
is the scheme $X_n^\F$ from that proposition. Using Proposition \ref{Prop:bijective_normal_p}
again, we see that $X_n^\Ring$ is Cohen-Macaulay  after replacing $\Ring$
with a finite localization. Then we can define the bundle $\Pro_n^\Ring$ on $H_n^\Ring$
as $\pi_{n*}\Str_{X_n^\Ring}$. By Proposition \ref{Prop:bijective_normal_p}, it specializes to
$\Pro_n^\F$ over $\F$. It follows that the specialization $\Pro_n$ to $\C$ has no higher cohomology
and coincides with
$$\pi_{n*}\Str_{IH_n\times_{V_n/S_n}V_{n}^{1}/S_{n}}.$$
From the latter we see that it is a Procesi bundle. By the construction, it comes with
a morphism $\Pro_n\rightarrow \alpha_{n*}\beta_n^* \Pro_{n-1,\diamondsuit}$ that must be
an isomorphism. This proves Theorem \ref{Thm:Procesi_inductive}. Then
we can repeat the proof of the remaining parts of Proposition \ref{Prop:bijective_normal_p} over $\C$
to prove Theorem \ref{Thm:bijective_normalization}.

\end{document}